\documentclass{article}
\usepackage[utf8]{inputenc}
\usepackage{amsthm}
\usepackage{amstext}
\usepackage{amsmath}
\usepackage{amssymb}
\usepackage[all]{xy}
\usepackage{color}
\usepackage{verbatim}

\usepackage{authblk}

\usepackage{enumitem}

\newtheorem{definition}{Def\text{}inition}[section]
\newtheorem{theorem}[definition]{Theorem}

\newtheorem{lemma}[definition]{Lemma}
\newtheorem{proposition}[definition]{Proposition}
\newtheorem{corollary}[definition]{Corollary}
\newtheorem{remark}[definition]{Remark}

\newtheorem{observation}[definition]{Observation}

\usepackage{tikz-cd}

\begin{document}

\title{ \bf \large Variations of star selection principles on small spaces}
\author{ \small JAVIER CASAS-DE LA ROSA AND SERGIO A. GARCIA-BALAN}
\date{}
\maketitle

\begin{abstract} 
In this paper, we introduce the notions of Star-$\sigma\mathcal{K}$ and absolutely Star-$\sigma\mathcal{K}$ spaces which allow us to unify results among several properties in the theory of star selection principles on small spaces. In particular, results on star selective versions of the Menger, Hurewicz and Rothberger properties and selective versions of property $(a)$ regarding the size of the space. Connections to other well-known star properties are mentioned. Furthermore, the absolute and selective version of the neighbourhood star selection principle are introduced. As an application, it is obtained that the extent of a separable absolutely strongly star-Menger (absolutely strongly star-Hurewicz) space is at most the dominating number $\mathfrak{d}$ (the bounding number $\mathfrak{b}$). 
\end{abstract}

\let\thefootnote\relax\footnote{\today }
\let\thefootnote\relax\footnote{2020 \emph{Mathematics Subject Classification}. Primary 54D20; Secondary 54A25. }
\let\thefootnote\relax\footnote{ \emph{Key words and phrases.} (Absolutely) strongly star Menger, absolutely neighbourhood star Menger, Hurewicz, Rothberger, selectively $(a)$, star selection principles. }


\section{Introduction}
\label{intro}
In this section, we recall some classic definitions and important results in the theory of star selection principles that are central to our work. In addition, we introduce useful notation and terminology that will help us to deal with variations of the classical star versions of Menger, Hurewicz and Rothberger properties and selective variations property $(a)$. Main results, consequences and applications are in Section 2,  and 3. In Section 4 we introduce new variations of neighbourhood star selection principles.

\subsection{Notation and terminology}

Let $X$ be a topological space. We denote by $[X]^{<\omega}$ the collection of all finite subsets of $X$. 
For a subset $A$ of $X$ and a collection $\mathcal{U}$ of subsets of $X$, the star of $A$ with respect to $\mathcal{U}$, denoted by $St(A,\mathcal{U})$, is the set $\bigcup\{U\in\mathcal{U}:U\cap A\neq\emptyset\}$; for $A=\{x\}$ with $x\in X$, we write $St(x,\mathcal{U})$ instead of $St(\{x\},\mathcal{U})$. Throughout this paper, all spaces are assumed to be regular, unless a specific separation axiom is indicated. For notation and terminology, we refer to \cite{E}.

In the context of classical star covering properties, we follow the notation of \cite{DRRT}. Recall that a space $X$ is said to be strongly starcompact (strongly star-Lindel\"{o}f), briefly $SSC$ ($SSL$), if for every open cover $\mathcal{U}$ of $X$ there exists a finite (countable) subset $F$ of $X$ such that $St(F,\mathcal{U})=X$. A space $X$ is starcompact (star-Lindel\"{o}f), briefly $SC$ ($SL$), if for every open cover $\mathcal{U}$ of $X$ there exists a finite (countable) subset $\mathcal{V}$ of $\mathcal{U}$ such that $St(\bigcup\mathcal{V},\mathcal{U})=X$. It is worth to mention that countable compactness and strongly starcompactness are equivalent for Hausdorff spaces (see \cite{DRRT}). We refer the reader to the survey of Matveev \cite{M} for a more detailed treatment of these star covering properties.

\subsection{Classical (star) selection principles}\label{classicalSSP}

We recall the definition of three classical well-known selection principles and its star versions. Let $\mathcal{A}$ and $\mathcal{B}$ be families of sets. The following general forms of classical selection principles were introduced by M. Scheepers in \cite{MS1}:\\
\newline
$\mathbf{S_1(\mathcal{A},\mathcal{B})}$: For each sequence $\{A_n:n\in\omega\}$ of elements of $\mathcal{A}$ there is a sequence $\{b_n:n\in\omega\}$ such that for each $n\in\omega$, $b_n\in A_n$ and $\{b_n:n\in\omega\}$ is an element of $\mathcal{B}$.\\
\newline
$\mathbf{S_{fin}(\mathcal{A},\mathcal{B})}$: For each sequence $\{A_n:n\in\omega\}$ of elements of $\mathcal{A}$ there is a sequence $\{B_n:n\in\omega\}$ such that for each $n\in\omega$, $B_n\in [A_n]^{<\omega}$ and $\bigcup\{B_n:n\in\omega\}$ is an element of $\mathcal{B}$.\\
\newline
$\mathbf{U_{fin}(\mathcal{A},\mathcal{B})}$: For each sequence $\{A_n:n\in\omega\}$ of elements of $\mathcal{A}$ there is a sequence $\{B_n:n\in\omega\}$ such that for each $n\in\omega$, $B_n\in [A_n]^{<\omega}$ and $\{\bigcup B_n:n\in\omega\}$ is an element of $\mathcal{B}$.\\

Given a topological space $X$, we denote by $\mathcal{O}$ the collection of all open covers of $X$ and by $\Gamma$ the collection of all $\gamma$-covers of $X$; an open cover $\mathcal{U}$ of $X$ is a $\gamma$-cover if it is infinite and each $x\in X$ belongs to all but finitely many elements of $\mathcal{U}$. Thus,
\begin{description}
\item[M]: $S_{fin}(\mathcal{O},\mathcal{O})$ defines the classical Menger covering property (see \cite{MEN});
\item[R]: $S_1(\mathcal{O},\mathcal{O})$ defines the classical Rothberger covering property (see \cite{R});
\item[H]: $U_{fin}(\mathcal{O},\Gamma)$ defines the classical Hurewicz covering property (see \cite{H}).
\end{description}

The following star selection principles were introduced in \cite{K}. Henceforth, $\mathcal{A}$ and $\mathcal{B}$ will denote some collections of open covers of a space $X$ and $\mathcal{K}$ a family of subsets of $X$:\\
\newline
$\mathbf{S^*_1(\mathcal{A},\mathcal{B})}$: For each sequence $\{\mathcal{U}_n:n\in\omega\}\subseteq\mathcal{A}$ there exists a sequence $\{U_n:n\in\omega\}$ such that $U_n\in\mathcal{U}_n$, $n\in\omega$, and $\{St(U_n,\mathcal{U}_n):n\in\omega\}\in\mathcal{B}$.\\
\newline
$\mathbf{S^*_{fin}(\mathcal{A},\mathcal{B})}$: For each sequence $\{\mathcal{U}_n:n\in\omega\}\subseteq\mathcal{A}$ there exists a sequence $\{\mathcal{V}_n:n\in\omega\}$ such that $\mathcal{V}_n$ is a finite subset of $\mathcal{U}_n$, $n\in\omega$, and $\{St(\bigcup\mathcal{V}_n,\mathcal{U}_n:n\in\omega\}\in\mathcal{B}$.\\
\newline
$\mathbf{SS^*_{\mathcal{K}}(\mathcal{A},\mathcal{B})}$: For each sequence $\{\mathcal{U}_n:n\in\omega\}\subseteq\mathcal{A}$ there exists a sequence $\{K_n:n\in\omega\}\subseteq\mathcal{K}$ such that $\{St(K_n,\mathcal{U}_n):n\in\omega\}\in\mathcal{B}$.\\

When $\mathcal{K}$ is the collection of all finite (resp. one-point) subsets of $X$, it is denoted by $\mathbf{SS^*_{fin}(\mathcal{A},\mathcal{B})}$ (resp. $\mathbf{SS^*_1(\mathcal{A},\mathcal{B})}$) instead of $\mathbf{SS^*_{\mathcal{K}}(\mathcal{A},\mathcal{B})}$. Following this terminology, the star versions for the cases Menger and Rothberger were defined in \cite{K} and the star versions for the Hurewicz case were defined in \cite{BCK}:

\begin{description}
\item[SM]: $S^*_{fin}(\mathcal{O},\mathcal{O})$ defines the star-Menger property (see \cite{K});
\item[SSM]: $SS^*_{fin}(\mathcal{O},\mathcal{O})$ defines the strongly star-Menger property (see \cite{K});
\item[SR]: $S^*_1(\mathcal{O},\mathcal{O})$ defines the star-Rothberger property (see \cite{K});
\item[SSR]: $SS^*_1(\mathcal{O},\mathcal{O})$ defines the strongly star-Rothberger property (see \cite{K});
\item[SH]: $S^*_{fin}(\mathcal{O},\Gamma)$ defines the star-Hurewicz property (see \cite{BCK});
\item[SSH]: $SS^*_{fin}(\mathcal{O},\Gamma)$ defines the strongly star-Hurewicz property (see \cite{BCK}).
\end{description}

For paracompact Hausdorff spaces the three Menger-type properties, $SM$, $SSM$ and $M$ are equivalent and the same situation holds for the three Rothberger-type properties and the three Hurewicz-type properties (see \cite{K} and \cite{BCK}). In fact, the previous equivalences also holds in paraLindel\"of spaces (see \cite{CGS}).\\

The following diagram shows the relationships among these properties (in the diagram $C$ and $L$ are used to denote compactness and the Lindel\"{o}f property, respectively). None of the arrows in the following diagram reverse. We refer the reader to \cite{K_survey} to see the current state of knowledge about these relationships with others.

\begin{figure}[h!]
\[
\begin{tikzcd}[row sep=1em, column sep = 1em]
C \arrow[rr] \arrow[dd,swap] && H \arrow[rr] \arrow[dd] && M  \arrow[rr,<-]  \arrow[dr] \arrow[dd] &&
  R \arrow[dd ] \\
&& && & L &&  \\
SSC \arrow[rr] \arrow[dd] && SSH \arrow[rr] \arrow[dd] && SSM \arrow[rr,<-] \arrow[dr]  \arrow[dd]  && SSR \arrow[dd]  \\
&& && & SSL  \arrow[uu,<-,crossing over]&&  \\
SC \arrow[rr] && SH \arrow[rr] && SM \arrow[rr,<-] \arrow[dr] && SR \\
&& && & SL  \arrow[uu,<-,crossing over]&&  
\end{tikzcd}
\]
\end{figure}

\newpage

The first important characterizations of some of these star selection principles for the case of $\Psi$-spaces were obtained by Bonanzinga and Matveev:

\begin{proposition}[\cite{BM}]
\label{BMCharacterization}
Given any almost disjoint family $\mathcal{A}$, the following assertions hold.
\begin{enumerate}
\item $\Psi(\mathcal{A})$ is strongly star-Menger if and only if $|\mathcal{A}| < \mathfrak{d}$.
\item $\Psi(\mathcal{A})$ is strongly star-Hurewicz if and only if $|\mathcal{A}| < \mathfrak{b}$.
\item If $|\mathcal{A}| < cov(\mathcal{M})$, then $\Psi(\mathcal{A})$ is strongly star-Rothberger.
\end{enumerate}
\end{proposition}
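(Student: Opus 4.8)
The plan is to study open covers of $\Psi(\mathcal{A})=\omega\cup\mathcal{A}$ through the dense set $\omega$ of isolated points and to reduce each of the three star-selection requirements to a combinatorial statement about the enumeration functions of the members of $\mathcal{A}$. Fix a sequence $\{\mathcal{U}_n:n\in\omega\}$ of open covers. For each $A\in\mathcal{A}$ let $e_A$ be the increasing enumeration of $A$, and for each $n$ choose $U_{A,n}\in\mathcal{U}_n$ with $A\in U_{A,n}$; since a basic neighbourhood of $A$ omits only finitely many points of $A$, there is a least index $t_A(n)$ with $\{e_A(k):k\ge t_A(n)\}\subseteq U_{A,n}$, and I set $f_A(n)=e_A(t_A(n))$. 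The crucial (and routine) dictionary is this: for finite $F\subseteq\omega$ one has $A\in St(F,\mathcal{U}_n)$ as soon as $F$ meets $\{e_A(k):k\ge t_A(n)\}$, so in particular $f_A(n)\in F$ forces $A\in St(F,\mathcal{U}_n)$; conversely, for the extremal covers built in the necessity arguments, $A\in St(F,\mathcal{U}_n)$ entails $\max F\ge f_A(n)$. Thus catching $A$ at stage $n$ is governed by comparing $\max F_n$ with $f_A(n)$, while the isolated points are covered for free by always adjoining $\{0,1,\dots,n\}$ to each $F_n$.

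For part (1), the sufficiency $|\mathcal{A}|<\mathfrak{d}\Rightarrow SSM$ runs as follows: $\{f_A:A\in\mathcal{A}\}$ has size $<\mathfrak{d}$, hence is not dominating, so I fix $g$ with $g(n)>f_A(n)$ for infinitely many $n$, for every $A$; taking $F_n=\{0,\dots,\max(g(n),n)\}$ puts $f_A(n)$ into $F_n$ for at least one $n$, so $\{St(F_n,\mathcal{U}_n):n\in\omega\}$ covers. For necessity I argue the contrapositive from $|\mathcal{A}|\ge\mathfrak{d}$: using a dominating family of size $\mathfrak{d}\le|\mathcal{A}|$ (padded by finite additive shifts so as to dominate \emph{everywhere}), I inject it into $\mathcal{A}$ as $\{A_\alpha\}$ and design $\mathcal{U}_n=\{\{k\}:k\in\omega\}\cup\{\{A\}\cup(A\setminus t_A(n)):A\in\mathcal{A}\}$, choosing $t_{A_\alpha}$ so that $f_{A_\alpha}(n)\ge d_\alpha(n)$ for all $n$. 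For any finite sets $F_n$, putting $g=(\max(F_n\cap\omega))_n$, everywhere-domination gives an $\alpha$ with $f_{A_\alpha}(n)>g(n)$ for all $n$, so $A_\alpha$ is never caught through an integer point; the one extra point to check is that only countably many members of $\mathcal{A}$ can be caught by inserting limit points themselves into the finite sets $F_n$, so the domination-theoretic obstruction on the $\mathfrak{d}$-many members $A_\alpha$ cannot be overcome, and $SSM$ fails.

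Part (2) is the same argument with ``covered once'' replaced by ``covered for all but finitely many $n$'', trading $\mathfrak{d}$ for $\mathfrak{b}$. For sufficiency, $|\mathcal{A}|<\mathfrak{b}$ makes $\{f_A\}$ bounded, so some $g$ has $f_A\le^* g$ for every $A$; then with $F_n=\{0,\dots,\max(g(n),n)\}$ each $A$, and each isolated point, lies in $St(F_n,\mathcal{U}_n)$ for all but finitely many $n$, i.e. the stars form a $\gamma$-cover, giving $SSH$. For necessity, $|\mathcal{A}|\ge\mathfrak{b}$ lets me realise an unbounded family as $\{f_{A_\alpha}\}$ (no shift is needed now, since unboundedness is an infinitely-often, hence eventual, property); for any $F_n$ the function $g=(\max(F_n\cap\omega))_n$ fails to dominate some $f_{A_\alpha}$, which is therefore missed at infinitely many stages, so the stars cannot form a $\gamma$-cover.

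Part (3) is only a sufficient condition and is where the real work lies. Here a single point $x_n$ is chosen at each stage, so catching $A$ at stage $n$ requires $x_n$ itself to be an element of $A$ beyond position $t_A(n)$: exact membership, rather than a comparison of maxima, is now what matters, and this is precisely the feature that brings in $cov(\mathcal{M})$. I would invoke Bartoszyński's characterisation of $cov(\mathcal{M})$ as the least size of an eventually-different covering family (equivalently, the least $\kappa$ such that no single real is infinitely-often-equal to all members of some $\kappa$-sized family), code each $A$ by a chopped-real version of $f_A$, and use $|\mathcal{A}|<cov(\mathcal{M})$ to produce a single sequence $g$ that is infinitely-often-equal to every code; reading $g$ back as the sequence of chosen points $x_n\in\omega$ then catches every $A$ and yields $SSR$. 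The main obstacle, and the step demanding the most care, is exactly this coding: I must bridge the gap between \emph{set}-membership $x_n\in\{e_A(k):k\ge t_A(n)\}$ and the \emph{exact-equality} matching of the $cov(\mathcal{M})$ characterisation, which I expect to handle by partitioning $\omega$ into consecutive finite blocks and declaring a match whenever $x_n$ lands in the block carrying a suitable tail element of $A$, so that ``infinitely-often-equal'' translates into ``meets the required tail of $A$ at some stage''. By contrast, the soft domination and boundedness arguments of parts (1) and (2) should then read as routine.
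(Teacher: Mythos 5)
Your parts (1) and (2) follow the standard Bonanzinga--Matveev route, which is also the technique this paper itself uses elsewhere (the paper does not reprove Proposition \ref{BMCharacterization}; it cites \cite{BM}, its Propositions \ref{SSL+dimpliesSSM}--\ref{SSL+covMimpliesSSR} already yield the three sufficiency halves since $\Psi(\mathcal{A})$ is separable, hence $SSL$, and the proof of Theorem \ref{SeparableaNSMsmallextent} is its version of the necessity argument). Your sufficiency halves of (1) and (2) are correct as written. In the necessity halves, however, the step you flag as ``the one extra point to check'' is genuinely incomplete: after setting $g(n)=\max(F_n\cap\omega)$ you need an $\alpha$ such that $d_\alpha$ dominates $g$ everywhere \emph{and} $A_\alpha\notin\bigcup_n F_n$. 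Knowing that only countably many $A_\alpha$ can be captured as points does not by itself exclude the possibility that these are exactly the $\alpha$'s whose $d_\alpha$ lies above $g$. What closes the gap is the observation that, inside a dominating family, the set of members dominating a fixed $g$ is itself a dominating family (for any $g'$ take a member above $\max(g,g')$), hence has size at least $\mathfrak{d}>\omega$; equivalently, a dominating family remains dominating after deleting countably many members. The analogous remark (the members of an unbounded family not dominated by a fixed $g$ form an unbounded family) is needed in part (2), where you do not even mention the capture issue. These are short arguments, but they must be made.

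Part (3) invokes the right tool --- the infinitely-often-equal guessing characterization of $cov(\mathcal{M})$ --- but the execution plan is misdirected on two counts. First, the ``gap between set-membership and exact equality'' that you present as the main obstacle does not exist: you already defined $f_A(n)=e_A(t_A(n))$, a \emph{designated} element of $A\cap U_{A,n}$, so if $g$ is infinitely often equal to $f_A$, then at those stages $x_n=g(n)=f_A(n)\in U_{A,n}$ and hence $A\in St(x_n,\mathcal{U}_n)$. Equality with a designated member of the tail trivially implies membership in the tail; the converse implication would only be needed for a necessity claim, and item (3) makes none, so no block-coding or chopped reals are required. Second, what your sketch actually omits is the covering of the isolated points: with single-point selections the device of adjoining $\{0,\dots,n\}$ to $F_n$ is unavailable, and against a cover in which the singletons $\{k\}$ are the only members containing $k$, the point $k$ is captured only when $x_n=k$ exactly. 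So you must split the index set into two infinite sets of stages, enumerate $\omega$ along one, and run the guessing argument along the other (the i.o.e.\ characterization applies along any infinite subsequence). With these repairs all three parts go through.
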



The following generalization of one direction of (1) in the previous result was given by Sakai in \cite{SM}. It can be viewed as a selective version of the fact that Lindel\"of spaces of size less than $\mathfrak{d}$ are Menger:

\begin{proposition}\cite{SM}
\label{SSL+dimpliesSSM}
Every strongly star-Lindel\"{o}f space of cardinality less than $\mathfrak{d}$ is strongly star-Menger.
\end{proposition}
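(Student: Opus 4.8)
The plan is to translate the covering requirement into a statement about a family of functions in $\omega^\omega$ and then exploit the fact that a family of size less than $\mathfrak{d}$ cannot be dominating. Fix an arbitrary sequence $\{\mathcal{U}_n:n\in\omega\}$ of open covers of $X$; the goal is to produce finite sets $F_n\subseteq X$ such that $\{St(F_n,\mathcal{U}_n):n\in\omega\}$ covers $X$, which is exactly $SS^*_{fin}(\mathcal{O},\mathcal{O})$.

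First I would invoke strong star-Lindel\"ofness: for each $n$ there is a countable set $C_n=\{x_{n,k}:k\in\omega\}\subseteq X$ with $St(C_n,\mathcal{U}_n)=X$. The key observation is that $St(C_n,\mathcal{U}_n)=\bigcup_{k\in\omega}St(x_{n,k},\mathcal{U}_n)$, so every point of $X$ is captured by the star of a single enumerated witness. This lets me define, for each $x\in X$, a function $f_x\in\omega^\omega$ by letting $f_x(n)$ be the least $k$ with $x\in St(x_{n,k},\mathcal{U}_n)$; this is well defined precisely because $St(C_n,\mathcal{U}_n)=X$.

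Next, since $|\{f_x:x\in X\}|\leq|X|<\mathfrak{d}$, this family is not dominating, so there is $g\in\omega^\omega$ that is not dominated by any $f_x$; that is, for every $x\in X$ one has $f_x(n)<g(n)$ for infinitely many $n$. I would then set $F_n=\{x_{n,k}:k<g(n)\}$, a finite subset of $X$. For a fixed $x$, whenever $f_x(n)<g(n)$ the witness $x_{n,f_x(n)}$ lies in $F_n$, and hence $x\in St(x_{n,f_x(n)},\mathcal{U}_n)\subseteq St(F_n,\mathcal{U}_n)$. Since this occurs for infinitely many $n$, in particular $x$ belongs to some $St(F_n,\mathcal{U}_n)$, so $\{St(F_n,\mathcal{U}_n):n\in\omega\}$ is an open cover of $X$, as required.

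The step I expect to require the most care is getting the direction of the domination correct: one must unwind that a non-dominating family yields a $g$ which each $f_x$ fails to eventually dominate, equivalently $f_x(n)<g(n)$ infinitely often, and then check that this "infinitely often below $g$" conclusion is exactly what places each point in infinitely many (hence at least one) of the stars $St(F_n,\mathcal{U}_n)$. It is worth noting that the same bookkeeping actually shows each $x$ lies in $St(F_n,\mathcal{U}_n)$ for infinitely many $n$, a $\gamma$-cover-flavoured conclusion that is stronger than needed here but foreshadows the analogous strongly star-Hurewicz statement with $\mathfrak{b}$ in place of $\mathfrak{d}$.
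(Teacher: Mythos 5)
Your proof is correct, and it is essentially the argument the paper uses: the paper derives this proposition as Corollary \ref{trivilind}~(1) of Theorem \ref{general01}, whose proof likewise codes each point $x$ by the function $n \mapsto \min\{m : x \in St(E_n^m,\mathcal{U}_n)\}$, picks $g$ witnessing that a family of size less than $\mathfrak{d}$ is not dominating, and takes the finite sets indexed below $g(n)$. Your direct enumeration of the countable kernels and use of initial segments is just the specialization of the paper's $\sigma\mathcal{K}$-kernel argument to $\mathcal{K}$ the finite subsets, including the correct handling of the $g \not\le^* f_x$ direction.
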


The Hurewicz and Rothberger cases of Proposition \ref{SSL+dimpliesSSM} can be proved using similar ideas:

\begin{proposition}\cite{CGS}
\label{SSL+bimpliesSSH}
Every strongly star-Lindel\"{o}f space of cardinality less than $\mathfrak{b}$ is strongly star-Hurewicz.
\end{proposition}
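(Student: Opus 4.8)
The plan is to follow the proof scheme of Proposition~\ref{SSL+dimpliesSSM}, replacing the cardinal $\mathfrak{d}$ by $\mathfrak{b}$; the switch is forced precisely because the Hurewicz-type conclusion asks for a $\gamma$-cover rather than an ordinary open cover. Let $X$ be strongly star-Lindel\"of with $|X|<\mathfrak{b}$, and fix a sequence $\{\mathcal{U}_n:n\in\omega\}$ of open covers of $X$. Using $SSL$, for each $n$ I would choose a countable set $C_n=\{x_{n,k}:k\in\omega\}\subseteq X$ with $St(C_n,\mathcal{U}_n)=X$. Since $St(C_n,\mathcal{U}_n)=\bigcup_{k\in\omega}St(x_{n,k},\mathcal{U}_n)$, every point of $X$ lies in $St(x_{n,k},\mathcal{U}_n)$ for some $k$, so I define $f_x\in\omega^\omega$ for each $x\in X$ by $f_x(n)=\min\{k\in\omega:x\in St(x_{n,k},\mathcal{U}_n)\}$.

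The key step is to bound the family $\{f_x:x\in X\}$. It has cardinality at most $|X|<\mathfrak{b}$, and since $\mathfrak{b}$ is the least size of a $\leq^*$-unbounded subfamily of $\omega^\omega$, there is a single $g\in\omega^\omega$ with $f_x\leq^* g$ for every $x\in X$. Put $F_n=\{x_{n,k}:k\leq g(n)\}$, a finite subset of $X$. To see that $\{St(F_n,\mathcal{U}_n):n\in\omega\}$ is the desired $\gamma$-cover, fix $x\in X$; from $f_x\leq^* g$ there is $N$ with $f_x(n)\leq g(n)$ for all $n\geq N$, and then $x_{n,f_x(n)}\in F_n$ gives $x\in St(x_{n,f_x(n)},\mathcal{U}_n)\subseteq St(F_n,\mathcal{U}_n)$. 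Hence $x$ belongs to $St(F_n,\mathcal{U}_n)$ for all but finitely many $n$, which is exactly the witness required by $SS^*_{fin}(\mathcal{O},\Gamma)$, i.e.\ the strongly star-Hurewicz property.

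The point worth isolating is why $\mathfrak{b}$ is the correct cardinal. In Sakai's Menger argument one only needs each $x$ to lie in $St(F_n,\mathcal{U}_n)$ for \emph{at least one} $n$, and this is produced from the fact that a family of size $<\mathfrak{d}$ is not dominating. The Hurewicz conclusion instead demands that each $x$ lie in $St(F_n,\mathcal{U}_n)$ for \emph{all but finitely many} $n$, which translates verbatim into $f_x\leq^* g$; requiring one $g$ to eventually dominate the whole family $\{f_x:x\in X\}$ is exactly a boundedness statement, and boundedness of families of size $<\mathfrak{b}$ is the defining property of $\mathfrak{b}$.

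The only genuine point of care is the final verification that $\{St(F_n,\mathcal{U}_n):n\in\omega\}$ qualifies as a $\gamma$-cover in the sense defined above. The condition that every point lie in all but finitely many members is exactly what the computation delivers; the requirement that the family be infinite is a standard technicality in this setting, where the selected family is read as an $\omega$-indexed sequence and the degenerate case (only finitely many distinct stars, which by the eventual condition forces $X$ itself to occur cofinally) makes the strongly star-Hurewicz property immediate. It is also worth noting that $f_x$ is well defined precisely because each $C_n$ stars onto all of $X$, which is where the strong star-Lindel\"of hypothesis, rather than mere star-Lindel\"ofness, enters.
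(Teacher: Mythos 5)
Your proof is correct and follows exactly the route the paper intends: it is the Sakai-style argument of Proposition~\ref{SSL+dimpliesSSM} (and of Theorem~\ref{general01} with $\mathcal{K}$ the finite sets) with the non-domination step replaced by the bounding property of $\mathfrak{b}$, i.e.\ a single $g$ with $f_x\leq^* g$ for all $x$, which is precisely how the paper's Theorem~\ref{generalHurewicz} and Lemma~\ref{gbh} handle the $\gamma$-cover conclusion. Your closing remarks on well-definedness of $f_x$ and on the ``infinite family'' technicality in the definition of $\gamma$-cover match the standard reading used in the paper, so there is nothing to add.
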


\begin{proposition}
\label{SSL+covMimpliesSSR}
Every strongly star-Lindel\"{o}f space of cardinality less than $cov(\mathcal{M})$ is strongly star-Rothberger.
\end{proposition}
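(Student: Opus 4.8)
The plan is to follow the same template that underlies Propositions \ref{SSL+dimpliesSSM} and \ref{SSL+bimpliesSSH}, replacing the role of $\mathfrak{d}$ (resp. $\mathfrak{b}$) by the combinatorial characterization of $cov(\mathcal{M})$ in terms of eventually different functions. Recall that $cov(\mathcal{M})$ equals the least cardinality of a family $F\subseteq\omega^\omega$ such that every $h\in\omega^\omega$ is \emph{eventually different} from some $f\in F$, i.e. $h(n)\neq f(n)$ for all but finitely many $n$. Dualizing, any family in $\omega^\omega$ of cardinality strictly less than $cov(\mathcal{M})$ admits a single function that is \emph{infinitely equal} to each of its members (meaning it agrees with each member at infinitely many coordinates). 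This is the selection statement I will exploit.

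First I would fix a sequence $\{\mathcal{U}_n:n\in\omega\}$ of open covers of $X$. Applying the strong star-Lindel\"of property to each $\mathcal{U}_n$, I choose a countable set $A_n=\{a_{n,k}:k\in\omega\}\subseteq X$ with $St(A_n,\mathcal{U}_n)=X$ (padding with repetitions if $A_n$ is finite, so that it is indexed by all of $\omega$). Since $St(A_n,\mathcal{U}_n)=X$, for every $x\in X$ and every $n\in\omega$ there is some $k$ with $x\in St(a_{n,k},\mathcal{U}_n)$; this lets me define $g_x\in\omega^\omega$ by $g_x(n)=\min\{k\in\omega:x\in St(a_{n,k},\mathcal{U}_n)\}$.

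The collection $\{g_x:x\in X\}$ has cardinality at most $|X|<cov(\mathcal{M})$, so by the characterization above it cannot be an eventually-different witnessing family. Hence there exists a single $f\in\omega^\omega$ such that for every $x\in X$ one has $f(n)=g_x(n)$ for infinitely many $n$. Setting $x_n=a_{n,f(n)}$, I would verify that $\{St(x_n,\mathcal{U}_n):n\in\omega\}$ is an open cover of $X$: given $x\in X$, pick any $n$ with $f(n)=g_x(n)$; then $x\in St(a_{n,g_x(n)},\mathcal{U}_n)=St(x_n,\mathcal{U}_n)$. This establishes $SS^*_1(\mathcal{O},\mathcal{O})$, i.e. that $X$ is strongly star-Rothberger.

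The routine parts (existence of the countable sets, well-definedness of $g_x$, and the final covering check) mirror the Menger and Hurewicz cases almost verbatim. The only genuinely delicate point is matching the quantifiers correctly: the Menger argument uses that a small family is non-dominating (a frequent upper bound), and the Hurewicz argument uses that a small family is bounded (an eventual upper bound), whereas here I must instead pass from ``not an eventually-different family'' to the existence of a single function \emph{infinitely equal} to all members. Pinning down precisely which characterization of $cov(\mathcal{M})$ yields exactly this infinitely-equal selection, rather than an eventually-different or an infinitely-often-below statement, is where I expect the main care to be needed.
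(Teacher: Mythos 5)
Your proposal is correct and takes essentially the same approach as the paper: the intended proof (carried out explicitly in the paper's Proposition~\ref{aSSLSMALLcovMimpliesSelSSM}, the selective analogue stated to use ``similar ideas'') likewise extracts countable star kernels from strong star-Lindel\"ofness, defines $f_\alpha(n)=\min\{m: x_\alpha\in St(d^n_m,\mathcal{U}_n)\}$, and applies the same infinitely-equal guessing function available for families of size less than $cov(\mathcal{M})$. Your quantifier bookkeeping on the eventually-different characterization is exactly the one the paper relies on, so no gap remains.
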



Other generalizations of (1) and (2) of Proposition \ref{BMCharacterization} that also characterizes star selection principles on the Niemytzki plane were given in \cite{CGS}: 

\begin{theorem}\cite{CGS}
\label{CharacterizationofSSM}
Let $X$ be a topological space of the form $Y\cup Z$, where $Y\cap Z=\emptyset$, $Z$ is a $\sigma$-compact subspace and $Y$ is a closed discrete set. If $X$ is strongly star-Lindel\"{o}f, then $|Y|<\mathfrak{d}$ if and only if $X$ is strongly star-Menger.
\end{theorem}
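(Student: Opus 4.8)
The plan is to prove the two implications separately, handling $Z$ through a compact exhaustion $Z=\bigcup_{m\in\omega}Z_m$ (with each $Z_m$ compact and $Z_m\subseteq Z_{m+1}$) and handling $Y$ through a domination argument, exploiting that in the covers one builds, the only member meeting a given point of $Y$ is a small neighbourhood of that point.

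For the implication $|Y|<\mathfrak{d}\Rightarrow X$ strongly star-Menger, I would refine the proof of Proposition \ref{SSL+dimpliesSSM}. Given a sequence $\{\mathcal{U}_n:n\in\omega\}$ of open covers, split $\omega=A\sqcup B$ into two infinite pieces. Along $B$, fix a bijection $n\mapsto m_n$ of $B$ onto $\omega$, and for $n\in B$ use compactness of $Z_{m_n}$ to pick a finite $\mathcal{V}\subseteq\mathcal{U}_n$ covering $Z_{m_n}$ and set $F_n=\{p_V:V\in\mathcal{V}\}$ with $p_V\in V$; then $St(F_n,\mathcal{U}_n)\supseteq\bigcup\mathcal{V}\supseteq Z_{m_n}$, so $\bigcup_{n\in B}St(F_n,\mathcal{U}_n)\supseteq Z$. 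Along $A$, use strong star-Lindel\"ofness to choose countable $C_n=\{x^n_k:k\in\omega\}$ with $St(C_n,\mathcal{U}_n)=X$, put $f_y(n)=\min\{k:y\in St(x^n_k,\mathcal{U}_n)\}$ for $y\in Y$, and, since $|\{f_y:y\in Y\}|\le|Y|<\mathfrak{d}$, pick $g\in\omega^A$ with $f_y\not\ge^* g$ for every $y$; then $F_n=\{x^n_k:k\le g(n)\}$ for $n\in A$ gives $Y\subseteq\bigcup_{n\in A}St(F_n,\mathcal{U}_n)$. The two families of stars together cover $X$.

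The substantial direction is $X$ strongly star-Menger $\Rightarrow|Y|<\mathfrak{d}$, which I would prove contrapositively, generalizing the $\Psi$-space computation behind Proposition \ref{BMCharacterization}(1). Assume $|Y|\ge\mathfrak{d}$. First manufacture a \emph{strongly} dominating family: starting from any dominating family of size $\mathfrak{d}$ and closing under the operations $h\mapsto h+c$ ($c\in\omega$) yields $\{e_\xi:\xi<\mathfrak{d}\}$ such that for every $g\in\omega^\omega$ there is $\xi$ with $e_\xi(n)>g(n)$ for all $n$. Fix $Y_0\subseteq Y$ with $|Y_0|=\mathfrak{d}$, partition it into $\mathfrak{d}$ pieces $P_\xi$ each of size $\mathfrak{d}$, and set $d_y=e_\xi$ for $y\in P_\xi$. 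For each $n$ define an open cover $\mathcal{U}_n$ by choosing, via regularity and the closed discreteness of $Y$, an open $U_{y,n}\ni y$ with $U_{y,n}\cap Y=\{y\}$ and $U_{y,n}\cap Z_{d_y(n)}=\emptyset$ (separating the point $y$ from the compact set $Z_{d_y(n)}$), and adjoining the open set $X\setminus Y$ together with isolating neighbourhoods of the points of $Y\setminus Y_0$. Then $U_{y,n}$ is the unique member of $\mathcal{U}_n$ containing $y$, so for every $y\in Y_0$ outside the countable set $\bigcup_n(F_n\cap Y)$ one has $y\in St(F_n,\mathcal{U}_n)\iff F_n\cap(U_{y,n}\cap Z)\ne\emptyset\Rightarrow F_n\cap(Z\setminus Z_{d_y(n)})\ne\emptyset$.

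Finally I would run the diagonalization. If $X$ were strongly star-Menger, these covers would produce finite $F_n$ whose stars cover $X$; putting $g(n)=\min\{m:F_n\cap Z\subseteq Z_m\}$ (finite, since $F_n\cap Z$ is finite), the displayed implication shows that any covered $y$ satisfies $g(n)>d_y(n)$ for some $n$. Choosing $\xi$ with $e_\xi>g$ everywhere and then $y\in P_\xi$ avoiding the countable set $\bigcup_n(F_n\cap Y)$, we get $d_y=e_\xi>g$ everywhere, so $g(n)>d_y(n)$ holds for no $n$ and $y$ lies in no $St(F_n,\mathcal{U}_n)$, contradicting coverage. The main obstacle is exactly this reverse direction: realizing the combinatorial domination condition by genuine open covers of an \emph{arbitrary} $\sigma$-compact $Z$—using regularity to detach each $y$ from the compact sets $Z_{d_y(n)}$ so that capturing $y$ forces $F_n$ to reach past level $d_y(n)$—and building in enough multiplicity in the assignment $y\mapsto d_y$ that a strong dominator survives the deletion of the countably many points of $Y$ that the $F_n$ may use directly.
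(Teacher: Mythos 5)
Your proposal is correct, and it takes essentially the same route as the techniques in this paper (the theorem itself is stated here without proof, as a citation to \cite{CGS}): your forward implication is the non-dominating-function argument of Theorem~\ref{general01} with the $\sigma$-compact part $Z$ absorbed by compactness of the pieces $Z_m$, and your reverse implication is the Bonanzinga--Matveev style cover construction used in the proof of Theorem~\ref{SeparableaNSMsmallextent} (itself a modification of Proposition 2 of \cite{BM}), transplanted from finite initial segments of a countable dense set to the compact sets $Z_{d_y(n)}$. The one genuine extra wrinkle, which you correctly identify and handle via the partition of $Y_0$ into $\mathfrak{d}$ pieces of size $\mathfrak{d}$, is that here the finite sets $F_n$ may meet $Y$, whereas in the setting of Theorem~\ref{SeparableaNSMsmallextent} they are constrained to lie in a dense set disjoint from the closed discrete set.
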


\begin{theorem}\cite{CGS}
\label{CharacterizationofSSH}
Let $X$ be a topological space of the form $Y\cup Z$, where $Y\cap Z=\emptyset$, $Z$ is a $\sigma$-compact subspace and $Y$ is a closed discrete set. If $X$ is strongly star-Lindel\"{o}f, then $|Y|<\mathfrak{b}$ if and only if $X$ is strongly star-Hurewicz.
\end{theorem}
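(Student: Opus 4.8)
The plan is to prove the two implications separately, following the blueprint of Theorem~\ref{CharacterizationofSSM} but replacing the dominating number $\mathfrak{d}$ by the bounding number $\mathfrak{b}$ and open covers by $\gamma$-covers. Concretely, the role played in the Menger case by ``a function that a small family fails to dominate'' will here be played by ``a function that bounds a small family''. Throughout I fix a compact exhaustion $Z=\bigcup_{k\in\omega}K_k$ with $K_k\subseteq K_{k+1}$ (from $\sigma$-compactness), and I use that $Y$ is closed discrete, so each $y\in Y$ has an open neighbourhood meeting $Y$ only in $y$, and that each $K_k$, being compact in a $T_3$ space, is closed. Since $y\in Y$ and $K_k\subseteq Z$ are disjoint, every $y$ has a neighbourhood avoiding any prescribed $K_k$.

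For the direction $|Y|<\mathfrak{b}\Rightarrow SSH$, I would refine the argument behind Proposition~\ref{SSL+bimpliesSSH}, using the decomposition to localise the smallness hypothesis to $Y$. Given a sequence $\{\mathcal{U}_n:n\in\omega\}$ of open covers, strong star-Lindel\"ofness yields countable sets $C_n=\{c^n_m:m\in\omega\}$ with $St(C_n,\mathcal{U}_n)=X$. For $y\in Y$ set $f_y(n)=\min\{m:y\in St(\{c^n_0,\dots,c^n_m\},\mathcal{U}_n)\}$. As $|Y|<\mathfrak{b}$, the family $\{f_y:y\in Y\}$ is $\le^*$-bounded by some $g$, and putting $E_n=\{c^n_0,\dots,c^n_{g(n)}\}$ gives $y\in St(E_n,\mathcal{U}_n)$ for all but finitely many $n$, for every $y\in Y$. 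Independently, compactness of $K_n$ produces a finite $D_n\subseteq K_n$ with $K_n\subseteq St(D_n,\mathcal{U}_n)$; since the $K_n$ increase, each $z\in Z$ lies in $St(D_n,\mathcal{U}_n)$ for all but finitely many $n$. Taking $F_n=E_n\cup D_n$ then makes $\{St(F_n,\mathcal{U}_n):n\in\omega\}$ a $\gamma$-cover of $X=Y\cup Z$, so $X$ is $SSH$.

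For the converse I argue the contrapositive: assuming $|Y|\ge\mathfrak{b}$ I build covers witnessing the failure of $SSH$. Fix a $\le^*$-increasing unbounded sequence $\{f_\alpha:\alpha<\mathfrak{b}\}$ (such a sequence of length $\mathfrak{b}$ exists) and inject $\{y_\alpha:\alpha<\mathfrak{b}\}$ into $Y$. For each $n$, let $\mathcal{U}_n$ consist of: for each $y_\alpha$ a neighbourhood $N^n_\alpha$ with $N^n_\alpha\cap Y=\{y_\alpha\}$ and $N^n_\alpha\cap K_{f_\alpha(n)}=\emptyset$ (so $N^n_\alpha\cap Z\subseteq Z\setminus K_{f_\alpha(n)}$), together with neighbourhoods $O_z\subseteq X\setminus Y$ covering $Z$ (and analogous neighbourhoods for any remaining points of $Y$). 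Then the only member of $\mathcal{U}_n$ containing $y_\alpha$ is $N^n_\alpha$. Now suppose $\{St(F_n,\mathcal{U}_n):n\in\omega\}$ is a $\gamma$-cover for some finite $F_n$, and define $g(n)$ to be the least $k$ with $F_n\cap Z\subseteq K_k$. For the co-countably many $\alpha$ with $y_\alpha\notin\bigcup_m F_m$, the relation $y_\alpha\in St(F_n,\mathcal{U}_n)$ forces $N^n_\alpha\cap F_n\neq\emptyset$, hence $F_n\cap(Z\setminus K_{f_\alpha(n)})\neq\emptyset$, hence $g(n)>f_\alpha(n)$; as this holds for all but finitely many $n$, we get $f_\alpha\le^* g$. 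Using that $\mathfrak{b}$ is regular and that the sequence is $\le^*$-increasing, a countable subset of indices cannot destroy unboundedness, so $g$ bounds the whole family, contradicting its unboundedness. Thus $X$ is not $SSH$.

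The \textbf{main obstacle} is the construction in the converse: one must encode each $f_\alpha$ into the covers so that $y_\alpha$ can be star-reached only through a point of $F_n$ lying in $Z\setminus K_{f_\alpha(n)}$, which requires simultaneously (i) separating the closed discrete points of $Y$ from one another and from $K_{f_\alpha(n)}$ inside a single cover, and (ii) arranging that the sets covering $Z$ never contain any $y_\alpha$. The remaining delicate point is purely combinatorial, namely that only countably many $y_\alpha$ can ever belong to $\bigcup_m F_m$, so that the domination conclusion must be pushed through a co-countable subfamily; this is where the regularity of $\mathfrak{b}$ and the choice of a $\le^*$-increasing unbounded sequence are essential. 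The direct implication, by contrast, is a routine adaptation of Proposition~\ref{SSL+bimpliesSSH} and presents no real difficulty.
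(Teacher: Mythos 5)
Your proof is correct and follows essentially the expected approach: the paper itself only cites \cite{CGS} for this theorem without reproducing a proof, but your forward direction is the natural localisation of Proposition \ref{SSL+bimpliesSSH} (the $\leq^*$-bounding argument applied to $Y$, with $Z$ handled by the compact exhaustion), and your converse is the same Bonanzinga--Matveev-type cover construction that the paper itself adapts in the proof of Theorem \ref{SeparableaNSMsmallextent}, with the compact sets $K_{f_\alpha(n)}$ playing the role there played by initial segments of the countable dense set. One minor simplification: you do not need the regularity of $\mathfrak{b}$ or a $\leq^*$-increasing scale in the converse, since $\mathfrak{b}>\omega$ implies the countably many $f_\alpha$ with $y_\alpha\in\bigcup_m F_m$ are $\leq^*$-bounded by some $h$, and the pointwise maximum of $g$ and $h$ would then $\leq^*$-bound the entire family, contradicting unboundedness of an arbitrary unbounded family of size $\mathfrak{b}$.
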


\subsection{Absolute and selective versions of star selection principles}
\label{ASelstarSL}

In this section we recall the absolute and selective versions of the classical star selection principles. We start by mentioning the definition of the absolute and selective versions of the strongly star-Lindel\"of property. In \cite{Bo}, Bonanzinga defined and studied the absolute version of the strongly star-Lindel\"of property.

\begin{definition}\cite{Bo}
A space $X$ is absolutely strongly star-Lindel\"{o}f ($aSSL$) if for any open cover $\mathcal{U}$ of $X$ and any dense subset $D$ of $X$, there is a countable set $C\subseteq D$ such that $St(C,\mathcal{U})=X$.
\end{definition}

On the other hand, the selective version of the strongly star-Lindel\"of property was defined first by S. Bhowmik in \cite{B} and later studied in \cite{BCS} with a different name.

\begin{definition}\cite{B}
A space $X$ is selectively strongly star-Lindel\"of ($selSSL$) if for every open cover $\mathcal{U}$ of $X$ and for every sequence $\{D_n:n\in\omega\}$ of dense sets of $X$, there is a sequence $\{F_n:n\in\omega\}$ of finite sets such that $F_n\subseteq D_n$, $n\in\omega$, and $\{St(F_n,\mathcal{U}):n\in\omega\}$ is an open cover of $X$.
\end{definition}

Caserta, Di Maio and Ko\v{c}inac introduced in \cite{CDK} the absolute versions of the classical star selection principles in a general form. Here, we will use a different notation that seems to be simpler and naturally relates to the star selection principle $SS^*_{\mathcal{K}}(\mathcal{A},\mathcal{B})$:$^1$ \footnote{$^1$In \cite{CDK}, the authors employed an idea of Matveev to define, in a different general form, the absolute versions of star selection principles. Since part of the motivation for that general form was to give the selective version of the property $(a)$, the word \emph{selectively} was used as part of that terminology (see for instance Section 5 in \cite{K_survey}). Here we prefer to use a different terminology because of the introduction of the selective versions of star selection principles.}

\begin{definition}
Given a space $X$, the following selection hypothesis is defined:\\
\newline
$\mathbf{absolutely \; SS^*_{\mathcal{K}}(\mathcal{A},\mathcal{B})}$: For each sequence $\{\mathcal{U}_n:n\in\omega\}\subseteq\mathcal{A}$ and each dense subset $D$ of $X$, there exists a sequence $\{K_n:n\in\omega\}\subseteq\mathcal{K}$ such that each $K_n\subseteq D$, $n\in\omega$, and $\{St(K_n,\mathcal{U}_n):n\in\omega\}\in\mathcal{B}$. 
\end{definition}

For shortness, we write $\mathbf{aSS^*_{\mathcal{K}}(\mathcal{A},\mathcal{B})}$ instead of $\mathbf{absolutely \; SS^*_{\mathcal{K}}(\mathcal{A},\mathcal{B})}$ and, as usual, when $\mathcal{K}$ is the collection of all finite (resp. one-point, closed discrete) subsets of $X$, we write $\mathbf{aSS^*_{fin}(\mathcal{A},\mathcal{B})}$ (resp. $\mathbf{aSS^*_1(\mathcal{A},\mathcal{B})}$, $\mathbf{aSS^*_{cd}(\mathcal{A},\mathcal{B})}$) instead of $\mathbf{aSS^*_{\mathcal{K}}(\mathcal{A},\mathcal{B})}$. Following this terminology, the absolute versions of the classical star selection principles (defined in \cite{CDK}) are given as follows:

\begin{description}
\item[aSSM]: $aSS^*_{fin}(\mathcal{O},\mathcal{O})$ defines the absolutely strongly star-Menger property;
\item[aSSR]: $aSS^*_1(\mathcal{O},\mathcal{O})$ defines the absolutely strongly star-Rothberger property;
\item[aSSH]: $aSS^*_{fin}(\mathcal{O},\Gamma)$ defines the absolutely strongly star-Hurewicz property.
\end{description}

More recently, Bonanzinga et al. defined and studied the selective version of the strongly star-Menger property in \cite{BoMa} and \cite{Cuz}. Furthermore, they asked the question whether absolutely strongly star-Menger and selectively strongly star-Menger are equivalent properties. This selective principle naturally gives birth to the selective version for the Hurewicz and Rothberger cases. We introduce the following general notation that includes these kind of selection principles.

\begin{definition}
Given a space $X$, the following selection hypothesis is defined:\\
\newline
$\mathbf{selectively \; SS^*_{\mathcal{K}}(\mathcal{A},\mathcal{B})}$: For each sequence $\{\mathcal{U}_n:n\in\omega\}\subseteq\mathcal{A}$ and each sequence $\{D_n:n\in\omega\}$ of dense sets of $X$, there exists a sequence $\{K_n:n\in\omega\}\subseteq\mathcal{K}$ such that each $K_n\subseteq D_n$, $n\in\omega$, and $\{St(K_n,\mathcal{U}_n):n\in\omega\}\in\mathcal{B}$. 
\end{definition}

For shortness, we write $\mathbf{selSS^*_{\mathcal{K}}(\mathcal{A},\mathcal{B})}$ instead of $\mathbf{selectively \; SS^*_{\mathcal{K}}(\mathcal{A},\mathcal{B})}$ and, again, when $\mathcal{K}$ is the collection of all finite (resp. one-point, closed discrete) subsets of $X$, we write $\mathbf{selSS^*_{fin}(\mathcal{A},\mathcal{B})}$ (resp. $\mathbf{selSS^*_1(\mathcal{A},\mathcal{B})}$, $\mathbf{selSS^*_{cd}(\mathcal{A},\mathcal{B})}$) instead of $\mathbf{selSS^*_{\mathcal{K}}(\mathcal{A},\mathcal{B})}$. With this notation, the selective versions of some classical star selection principles are given as follow:

\begin{description}
\item[selSSM]\cite{BoMa} \cite{Cuz}: $selSS^*_{fin}(\mathcal{O},\mathcal{O})$ defines the selectively strongly star-Menger property;
\item[selSSR]: $selSS^*_1(\mathcal{O},\mathcal{O})$ defines the selectively strongly star-Rothberger property;
\item[selSSH]: $selSS^*_{fin}(\mathcal{O},\Gamma)$ defines the selectively strongly star-Hurewicz property.
\end{description}

The natural relationships among the absolute and selective versions are given in the following diagram:

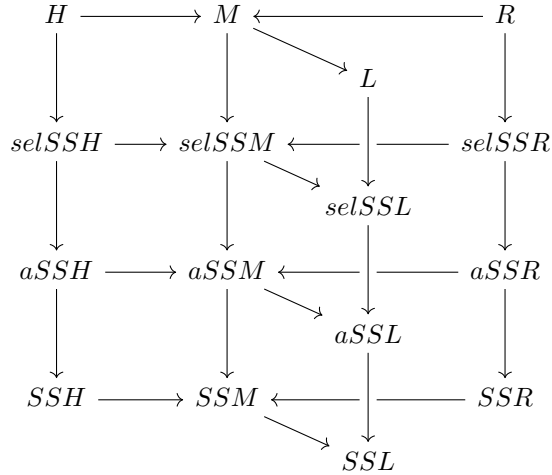
\begin{figure}[h!]
\[
\begin{tikzcd}[row sep=1em, column sep = 1em]
H \arrow[rr] \arrow[dd] && M  \arrow[rr,<-]  \arrow[dr] \arrow[dd] &&
  R \arrow[dd ] \\
&& & L &&  \\
selSSH \arrow[rr] \arrow[dd] && selSSM  \arrow[rr,<-]  \arrow[dr] \arrow[dd] &&
  selSSR \arrow[dd ] \\
&& & selSSL \arrow[uu,<-,crossing over] &&  \\
aSSH \arrow[rr] \arrow[dd] && aSSM \arrow[rr,<-] \arrow[dr]  \arrow[dd]  && aSSR \arrow[dd]  \\
&& & aSSL  \arrow[uu,<-,crossing over]&&  \\
SSH \arrow[rr] && SSM \arrow[rr,<-] \arrow[dr]  && SSR  \\
&& & SSL  \arrow[uu,<-,crossing over]&&  
\end{tikzcd}
\]
\caption{Absolute and Selective Versions} 
\end{figure}

Analogous to the question posed by Bonanzinga et al. in \cite{BoMa} and \cite{Cuz}, whether there is an absolutely strongly star-Menger not selectively strongly star-Menger space, it is important to figure out whether the absolute and selective versions of the Rothberger and Hurewicz cases are equivalent, respectively.\\

It is natural to wonder whether similar results as Proposition \ref{SSL+dimpliesSSM}, \ref{SSL+bimpliesSSH} and \ref{SSL+covMimpliesSSR} also hold in the absolute and selective context. We provide an affirmative answer in a broader sense in Section \ref{starSL}.

\subsection{Some Selective versions of Property $(a)$}
\label{SelectiveVerProp(a)}

In \cite{DRRT}, van Douwen et.al. showed that for Hausdorff spaces, countable compactness is equivalent to strongly starcompactness. Motivated by this equivalence, Matveev defined in \cite{Macc} the absolute version of the strongly starcompact property: 

\begin{definition}\cite{Macc}
A space $X$ is absolutely countably compact ($acc$) if for any open cover $\mathcal{U}$ of $X$ and any dense subset $D$ of $X$, there is a finite set $F\subseteq D$ such that $St(F,\mathcal{U})=X$.
\end{definition}
Later, using a similar idea, the following interesting property was also introduced by Matveev in \cite{Mpropertya}: 

\begin{definition}\cite{Mpropertya}
A space $X$ has property (a) if for every open cover $\mathcal{U}$ of $X$ and each dense set $D$ of $X$, there exists $C \subseteq D$ closed and discrete subset of $X$ such that $St(C,\mathcal{U}) = X$.
\end{definition}
Then, a selective version of property $(a)$, called \emph{selectively $(a)$}, was given by Caserta, Di Maio and Ko\v{c}inac in \cite{CDK} using a general selection hypothesis. We mention the selectively $(a)$ property in terms of our notation:

\begin{definition}\cite{CDK}
A space $X$ is selectively (a) if satisfies $aSS^*_{cd}(\mathcal{O},\mathcal{O})$.
\end{definition}

In \cite{CDK}, the authors pointed out that if $X$ is a separable selectively $(a)$ space, then every closed discrete subset of $X$ is of size less than $\mathfrak{c}$. The general case was proved by da Silva in  \cite{dSSG2014} where he investigated the selectively $(a)$ property in $\Psi$-spaces. In particular, he obtained the following result:

\begin{theorem}
\label{DasilvaA}
\cite{dSSG2014}
Let $\mathcal{A}$ be an infinite almost disjoint family on $\omega$. Then
\begin{enumerate}
    \item If the size of $\mathcal{A}$ is less than $\mathfrak{d}$, then $\Psi(\mathcal{A})$ is selectively $(a)$.
    \item Assume $\mathcal{A}$ is maximal. Then $\Psi(\mathcal{A})$ is selectively $(a)$ if and only if $|\mathcal{A}|<\mathfrak{d}$.
\end{enumerate}
\end{theorem}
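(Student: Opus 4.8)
The plan is to exploit the concrete structure of $\Psi(\mathcal{A})$. Its isolated points $\omega$ form a dense set that is contained in \emph{every} dense subset, so every dense $D$ satisfies $\omega\subseteq D\subseteq\omega\cup\mathcal{A}$, and we may always search for the required closed discrete selectors inside $\omega$. Since finite subsets of $\omega$ are automatically closed and discrete in $\Psi(\mathcal{A})$, they are legitimate choices for the $aSS^{*}_{cd}(\mathcal{O},\mathcal{O})$ selection; part (1) will in fact be witnessed by finite initial segments of $\omega$, while the nontrivial direction of part (2) will hinge on maximality forcing the closed discrete subsets of $\omega$ to be exactly the finite ones.

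For part (1), fix open covers $\{\mathcal{U}_n:n\in\omega\}$ and a dense $D$ (so $\omega\subseteq D$); maximality is not needed here. For each $A\in\mathcal{A}$ and $n\in\omega$ pick $U^A_n\in\mathcal{U}_n$ with the point $A\in U^A_n$; since any open set containing $A$ contains a cofinite portion of the set $A$, the intersection $A\cap U^A_n$ is nonempty, and we set $f_A(n)=\min(A\cap U^A_n)$. The family $\{f_A:A\in\mathcal{A}\}\subseteq\omega^\omega$ has size $|\mathcal{A}|<\mathfrak{d}$ and is therefore not dominating, so I would choose $g\in\omega^\omega$, which may be taken increasing and unbounded, such that $\{n:g(n)>f_A(n)\}$ is infinite for every $A$. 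Put $K_n=\{0,1,\dots,g(n)\}\subseteq\omega\subseteq D$; each $K_n$ is finite, hence closed and discrete. Then for each $A$ there are infinitely many $n$ with $f_A(n)\le g(n)$, and for any such $n$ the point $f_A(n)$ lies in $K_n\cap U^A_n$, so $A\in St(K_n,\mathcal{U}_n)$; and since $g$ is unbounded, $\bigcup_n K_n=\omega$, so every isolated point lies in some $K_n$ and therefore in $St(K_n,\mathcal{U}_n)$. Thus $\{St(K_n,\mathcal{U}_n):n\in\omega\}$ covers $\Psi(\mathcal{A})$, giving selectively $(a)$.

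For part (2), the implication $|\mathcal{A}|<\mathfrak{d}\Rightarrow$ selectively $(a)$ is exactly part (1), so it remains to prove the converse, which I would argue contrapositively using the single dense set $D=\omega$. The decisive observation is that if $\mathcal{A}$ is maximal, then every infinite $K\subseteq\omega$ meets some $A\in\mathcal{A}$ in an infinite set, whence $A\in\overline{K}\setminus K$ and $K$ fails to be closed in $\Psi(\mathcal{A})$; consequently the closed discrete subsets of $\omega$ are precisely the finite ones. Hence, applying the selectively $(a)$ selection to $\{\mathcal{U}_n\}$ together with $D=\omega$ produces \emph{finite} sets $K_n\subseteq\omega$ with $\{St(K_n,\mathcal{U}_n):n\in\omega\}$ an open cover, which is exactly a witness for the strongly star-Menger property $SS^{*}_{fin}(\mathcal{O},\mathcal{O})$. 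Therefore $\Psi(\mathcal{A})$ is strongly star-Menger, and Proposition \ref{BMCharacterization}(1) yields $|\mathcal{A}|<\mathfrak{d}$.

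I expect the only genuinely delicate point to be this reduction in part (2): recognizing that maximality collapses the closed discrete selectors available inside $\omega$ to finite sets, so that selectively $(a)$ tested against $D=\omega$ becomes indistinguishable from strong star-Mengerness, at which point the Bonanzinga–Matveev characterization closes the argument. The remaining verifications, namely that $\omega$ is dense, that initial segments are closed discrete, and that $A\cap U^A_n$ is cofinite in $A$, are routine.
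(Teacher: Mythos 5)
Your proof is correct, and it is worth noting that the paper itself does not prove this theorem: it is quoted from da Silva \cite{dSSG2014}, and the authors only point out that part (1) follows from their general machinery ($\Psi$-spaces are aStar-$\sigma$-cd, witnessed by $\omega=\bigcup_n\{0,\dots,n\}$ inside any dense set, so Theorem \ref{general01}(2), via Corollary \ref{trivilind}(3), gives strongly selectively $(a)$ when $|\mathcal{A}|<\mathfrak{d}$). Your part (1) is that same non-dominating-family argument specialized to $\Psi(\mathcal{A})$; moreover, since your selectors are finite initial segments of $\omega$ and $\omega$ lies in every dense set, you actually prove the formally stronger properties $aSSM$ and $selSSM$, which imply selectively $(a)$ in $T_1$ spaces. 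For part (2), which the paper's results do not cover at all, your reduction is exactly the right idea and is sound: maximality forces every infinite $K\subseteq\omega$ to have an accumulation point in $\mathcal{A}$ (otherwise $\mathcal{A}\cup\{K\}$ would be a larger almost disjoint family), so the closed discrete subsets of $\Psi(\mathcal{A})$ contained in $\omega$ are precisely the finite ones; testing selectively $(a)$ against the single dense set $D=\omega$ then literally produces the $SS^*_{fin}(\mathcal{O},\mathcal{O})$ selection, and Proposition \ref{BMCharacterization}(1) — an independent result of Bonanzinga and Matveev, so there is no circularity — yields $|\mathcal{A}|<\mathfrak{d}$. The trade-off between the two routes: the paper's Theorem \ref{general01} isolates the combinatorial core (stars distribute over the countable union of kernels, after which a single non-dominated function suffices) and applies to arbitrary aStar-$\sigma\mathcal{K}$ spaces, whereas your argument is self-contained for $\Psi$-spaces and, unlike the general theorem, also handles the converse direction in (2), which no smallness theorem can see since it must produce an upper bound on $|\mathcal{A}|$ rather than a selection. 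One cosmetic remark: you announce the converse in (2) as a contrapositive, but what you actually give is a direct implication (selectively $(a)$ implies $SSM$ implies $|\mathcal{A}|<\mathfrak{d}$), which is fine as written.
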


Variations of the previous notion will play an important role in sections 2 and 3. In particular, a stronger variation of the selectively $(a)$ property which is mentioned in \cite{PSS}  by Passos, Santana, and da Silva (called $\dagger$), and also introduced in \cite{Cuz}  with the following name:

\begin{definition}
A space $X$ is strongly selectively $(a)$ if satisfies $selSS^*_{cd}(\mathcal{O},\mathcal{O})$.
\end{definition}

It is worth mentioning that this property is currently being studied by Bonanzinga and Maesano in \cite{BoMa}.


\newpage

\section{Star selection principles in small spaces}
\label{starSL}

It is well-known that Lindel\"of spaces of size less than $\mathfrak{d}$ are Menger and Lindel\"of spaces of size less than $\mathfrak{b}$ are Hurewicz. Some star versions of these kind of results were presented in subsections \ref{classicalSSP} and \ref{SelectiveVerProp(a)}. The goal of this section is to present a new way to bring all these results together.


\subsection{General theorems on small spaces}

The notion of \emph{star-$\mathbb{P}$} was introduced in \cite{MTW} (see also \cite{M}) and its absolute version, namely, \emph{absolutely star-$\mathbb{P}$}  in \cite{YKS00}. Paying attention to the following concepts turned out to be essential in our main results:

\begin{definition}
\label{starP}
Given $\mathcal{K}$ a family of subsets of a space $X$, we call $X$:
\begin{itemize}[nosep]
\item \emph{Star-$\sigma\mathcal{K}$}  if for each open cover $\mathcal{U}$ of $X$ there is $K \subseteq X$ so that $K$ is a $\sigma\mathcal{K}$ kernel of $X$ with respect to $\mathcal{U}$. That is, $K$ is a countable union of subsets of $X$ each one belonging to $\mathcal{K}$ and $St(K,\mathcal{U}) = X$.
\item \emph{absolutely Star-$\sigma\mathcal{K}$}, (abbreviated aStar-$\sigma\mathcal{K}$) if for each $D$ dense subset of $X$ and for each open cover $\mathcal{U}$ of $X$ there is $K \subseteq D$ so that $K$ is a $\sigma\mathcal{K}$ kernel of $X$ with respect to $\mathcal{U}$. That is, $K$ is a countable union of subsets of $D$ each one belonging to $\mathcal{K}$ and $St(K,\mathcal{U}) = X$.
\end{itemize}
\end{definition}

\begin{observation}
\label{relationship}
If $\mathcal{B}$ is either $\mathcal{O}$ or $\Gamma$ and $\mathcal{K}$ is a family of subsets of a space $X$, then we have

\begin{center}
    $selSS^*_\mathcal{K}(\mathcal{O},\mathcal{B}) \rightarrow $ $aSS^*_\mathcal{K}(\mathcal{O},\mathcal{B}) \rightarrow  $ aStar-$\sigma\mathcal{K} \rightarrow   $ Star-$\sigma\mathcal{K}$.
\end{center}

\end{observation}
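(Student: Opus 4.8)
The plan is to verify each of the three implications in Observation \ref{relationship} directly from the definitions, proceeding from the strongest property on the left to the weakest on the right. Throughout, fix the relevant data (a sequence of open covers, dense sets, or a single cover) and unwind the definition of the source property to produce exactly the witness required by the target property. The three arrows correspond, respectively, to (i) weakening ``a sequence of dense sets'' to ``a single dense set'', (ii) weakening ``a fixed dense set $D$'' to ``the ambient space $X$ as its own dense subset'', and (iii) collapsing a countably-indexed selection into a single countable-union kernel for a single cover.

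First I would prove $selSS^*_\mathcal{K}(\mathcal{O},\mathcal{B}) \rightarrow aSS^*_\mathcal{K}(\mathcal{O},\mathcal{B})$. Suppose $X$ satisfies $selSS^*_\mathcal{K}(\mathcal{O},\mathcal{B})$, and let $\{\mathcal{U}_n : n \in \omega\} \subseteq \mathcal{O}$ together with a single dense set $D$ be given, as in the hypothesis of $aSS^*_\mathcal{K}(\mathcal{O},\mathcal{B})$. Set $D_n := D$ for every $n \in \omega$; this is a sequence of dense sets, so $selSS^*_\mathcal{K}(\mathcal{O},\mathcal{B})$ yields a sequence $\{K_n : n \in \omega\} \subseteq \mathcal{K}$ with each $K_n \subseteq D_n = D$ and $\{St(K_n,\mathcal{U}_n) : n \in \omega\} \in \mathcal{B}$. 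This is precisely a witness for $aSS^*_\mathcal{K}(\mathcal{O},\mathcal{B})$ applied to $\{\mathcal{U}_n\}$ and $D$, so the implication holds. The second arrow $aSS^*_\mathcal{K}(\mathcal{O},\mathcal{B}) \rightarrow \text{aStar-}\sigma\mathcal{K}$ is handled similarly: given a dense set $D$ and a single open cover $\mathcal{U}$ (the data for aStar-$\sigma\mathcal{K}$), form the constant sequence $\mathcal{U}_n := \mathcal{U}$, apply $aSS^*_\mathcal{K}(\mathcal{O},\mathcal{B})$ to obtain $\{K_n : n \in \omega\} \subseteq \mathcal{K}$ with each $K_n \subseteq D$ and $\{St(K_n,\mathcal{U}) : n \in \omega\} \in \mathcal{B}$, and then put $K := \bigcup_{n \in \omega} K_n$. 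Since $\mathcal{B}$ is either $\mathcal{O}$ or $\Gamma$, in both cases $\{St(K_n,\mathcal{U}) : n \in \omega\}$ is in particular an open cover, so $\bigcup_{n} St(K_n,\mathcal{U}) = X$; as $St(K_n,\mathcal{U}) \subseteq St(K,\mathcal{U})$ for each $n$, we conclude $St(K,\mathcal{U}) = X$. Moreover $K$ is a countable union of members of $\mathcal{K}$, each contained in $D$, so $K$ is a $\sigma\mathcal{K}$ kernel of $X$ contained in $D$, witnessing aStar-$\sigma\mathcal{K}$.

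For the final arrow $\text{aStar-}\sigma\mathcal{K} \rightarrow \text{Star-}\sigma\mathcal{K}$, observe that Star-$\sigma\mathcal{K}$ is simply the instance of aStar-$\sigma\mathcal{K}$ in which the dense set is taken to be $X$ itself: given any open cover $\mathcal{U}$, apply aStar-$\sigma\mathcal{K}$ with the dense set $D := X$ to obtain a $\sigma\mathcal{K}$ kernel $K \subseteq X$ with $St(K,\mathcal{U}) = X$, which is exactly what Star-$\sigma\mathcal{K}$ demands.

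The argument is essentially a bookkeeping exercise in specializing quantifiers, so there is no genuine obstacle. The one point that deserves care—and the only place the hypothesis ``$\mathcal{B}$ is either $\mathcal{O}$ or $\Gamma$'' is actually used—is the passage from the sequence $\{St(K_n,\mathcal{U}) : n\in\omega\}$ to the single kernel $K = \bigcup_n K_n$ in the second arrow: one must note that whether $\mathcal{B} = \mathcal{O}$ or $\mathcal{B} = \Gamma$, membership in $\mathcal{B}$ guarantees the family covers $X$, which is all that is needed since a $\sigma\mathcal{K}$ kernel only requires $St(K,\mathcal{U}) = X$ (the finer $\gamma$-cover structure is discarded at this step). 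I would state that monotonicity of the star operation, $St(K_n,\mathcal{U}) \subseteq St(K,\mathcal{U})$, explicitly, as it is the mechanism converting the countable collection of stars into a single star equal to $X$.
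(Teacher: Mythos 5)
Your proof is correct, and it is exactly the argument the authors had in mind: the paper states this as an \emph{Observation} with no proof at all, treating the three implications as immediate specializations of quantifiers (constant sequence of dense sets, constant sequence of covers, and $D=X$), which is precisely what you carry out. Your explicit note that membership in $\mathcal{B}$ (whether $\mathcal{O}$ or $\Gamma$) only gets used to conclude that the stars cover $X$ is a sound and worthwhile clarification of the one non-vacuous step.
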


\begin{theorem}
\label{general01}
Given a space $X$ of size less than $\mathfrak{d}$ and $\mathcal{K}$ a family of subsets of $X$ which is closed under finite unions, then
\begin{enumerate}[nosep]
\item if $X$ is star-$\sigma\mathcal{K}$, then $X$ is $SS^*_\mathcal{K}(\mathcal{O},\mathcal{O})$.
\item if $X$ is aStar-$\sigma\mathcal{K}$, then $X$ is $selSS^*_\mathcal{K}(\mathcal{O},\mathcal{O})$.
\end{enumerate}
\end{theorem}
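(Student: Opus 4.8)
The plan is to adapt the classical argument that a Lindel\"of space of size less than $\mathfrak{d}$ is Menger to the present star setting, with the family $\mathcal{K}$ and its closure under finite unions playing the role that finite subfamilies of a cover play in the Menger proof. Since (2) differs from (1) only by carrying the constraint $K_n \subseteq D_n$ throughout, I would prove (1) in full and then observe that (2) follows by the identical construction applied to the witnesses supplied by the \emph{absolute} hypothesis.

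For (1), fix a sequence $\{\mathcal{U}_n : n\in\omega\}$ of open covers. For each $n$, apply the star-$\sigma\mathcal{K}$ property to $\mathcal{U}_n$ to obtain $K^n = \bigcup_{m\in\omega} K^n_m$ with each $K^n_m \in \mathcal{K}$ and $St(K^n,\mathcal{U}_n)=X$. The crucial elementary observation is that the star operator is additive over unions, i.e. $St(K^n,\mathcal{U}_n)=\bigcup_{m\in\omega} St(K^n_m,\mathcal{U}_n)$, since a member of $\mathcal{U}_n$ meets $K^n$ precisely when it meets some $K^n_m$. Using that $\mathcal{K}$ is closed under finite unions, set $L^n_j := \bigcup_{m\le j} K^n_m \in \mathcal{K}$; then $\{St(L^n_j,\mathcal{U}_n)\}_{j\in\omega}$ is an increasing sequence of open sets whose union is $X$.

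Next, define for each $x\in X$ a function $f_x\in\omega^\omega$ by letting $f_x(n)$ be the least $j$ with $x\in St(L^n_j,\mathcal{U}_n)$; this is well defined because the union over $j$ equals $X$, and by monotonicity $x\in St(L^n_j,\mathcal{U}_n)$ holds iff $j\ge f_x(n)$. The family $\{f_x:x\in X\}$ has size at most $|X|<\mathfrak{d}$, hence fails to be dominating, so there is $g\in\omega^\omega$ that no $f_x$ eventually dominates; equivalently, for every $x$ the set $\{n: g(n)>f_x(n)\}$ is infinite. Putting $K_n := L^n_{g(n)}\in\mathcal{K}$, we get $x\in St(K_n,\mathcal{U}_n)$ for each of the infinitely many $n$ with $g(n)\ge f_x(n)$, so in particular for at least one $n$; thus $\{St(K_n,\mathcal{U}_n):n\in\omega\}$ is an open cover of $X$, witnessing $SS^*_\mathcal{K}(\mathcal{O},\mathcal{O})$. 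For (2) I would run exactly this construction on a given sequence $\{D_n:n\in\omega\}$ of dense sets: the aStar-$\sigma\mathcal{K}$ property yields $K^n\subseteq D_n$, whence every $L^n_j$ and every $K_n$ lies in $D_n$, and nothing else changes.

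I expect the only genuinely delicate point to be orienting the domination correctly: one needs the failure of dominating phrased as ``$g$ exceeds each $f_x$ infinitely often'' (so that each point is starred at least once), and it should be verified that this is exactly what $|X|<\mathfrak{d}$ supplies, since interchanging $\le^*$ and $<^*$ or mistaking which side is being dominated would break the covering conclusion. The other point worth emphasizing is the use of closure under finite unions: it is precisely what converts the countable $\sigma\mathcal{K}$-kernel into a single member of $\mathcal{K}$ at each coordinate, as the selection principle $SS^*_\mathcal{K}$ requires.
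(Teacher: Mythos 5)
Your proof is correct and takes essentially the same route as the paper's: the same decomposition of the $\sigma\mathcal{K}$-kernels into countably many members of $\mathcal{K}$, the same point-indexed functions $f_x\in\omega^\omega$, the same appeal to $|X|<\mathfrak{d}$ to obtain $g$ with $g\not\le^* f_x$ for every $x$, and the same use of closure under finite unions to turn the initial segments into single members of $\mathcal{K}$. The only cosmetic differences are that you prove (1) and note (2) carries the constraint $K_n\subseteq D_n$ through, whereas the paper proves (2) directly and says (1) is similar, and that you form the cumulative unions $L^n_j$ before defining $f_x$ while the paper forms the union $C_n=\bigcup_{m\le g(n)}E_n^m$ at the end.
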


\begin{proof}
We will prove item 2 as item 1 follows similarly. Hence, assume $X$ is absolutely star-$\sigma\mathcal{K}$ and $|X| < \mathfrak{d}$. Let $\{ \mathcal{U}_n : n \in \omega \}$ be a sequence of open covers of $X$  and let $\{ D_n : n \in \omega \}$ be any sequence of dense subsets of $X$. For $n \in \omega$, let $E_n \subseteq D_n$ be a $\sigma\mathcal{K}$ subset of $X$ so that $St(E_n,\mathcal{U}_n) = X$. Thus, for each $n \in \omega$, $E_n = \bigcup _{m\in\omega}E_n^m$ where for each $n,m \in \omega$, $E_n^m\in\mathcal{K}$. Let us list $X$ as $\{x_\alpha:\alpha < \kappa\}$ with $\kappa < \mathfrak{d}$. For each $n\in \omega$ and each $\alpha < \kappa$ let $f_\alpha(n) =min\{m\in\omega:x_\alpha \in St(E_n^m,\mathcal{U}_n)\}$. Since the collection $\{f_\alpha: \alpha < \kappa \}$ has size less than $\mathfrak{d}$, there exists $g \in \omega^\omega$ such that for every $\alpha < \kappa$, $g \not \le ^* f_\alpha$. 
For each $n \in \omega$, let $C_n =\bigcup _{m\leq g(n)} E_n^m$. For each $n\in \omega$, since $C_n \subseteq D_n$ is a finite union of elements of $\mathcal{K}$, and $\mathcal{K}$ is closed under finite unions, $C_n\in \mathcal{K}$ and $\{St(C_n,\mathcal{U}_n):n\in \omega\}$ is an open cover of $X$. Hence,  $X$ is $selSS^*_\mathcal{K}(\mathcal{O},\mathcal{O})$.
\end{proof}

With similar ideas it also holds true:

\begin{theorem}
\label{generalHurewicz}
Given a space $X$ of size less than $\mathfrak{b}$ and $\mathcal{K}$ a family of subsets of $X$ which is closed under finite unions, then
\begin{enumerate}[nosep]
\item if $X$ is star-$\sigma\mathcal{K}$, then $X$ is $SS^*_\mathcal{K}(\mathcal{O},\Gamma)$.
\item if $X$ is aStar-$\sigma\mathcal{K}$, then $X$ is $selSS^*_\mathcal{K}(\mathcal{O},\Gamma)$.
\end{enumerate}
\end{theorem}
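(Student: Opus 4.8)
The plan is to mirror the proof of Theorem \ref{general01}, replacing the dominating number $\mathfrak{d}$ with the bounding number $\mathfrak{b}$ and the conclusion ``open cover'' with ``$\gamma$-cover''. I will prove item 2, since item 1 follows by the same argument with the dense sets $D_n$ all taken to be $X$ itself (so that the aStar-$\sigma\mathcal{K}$ hypothesis is replaced by Star-$\sigma\mathcal{K}$). So suppose $X$ is aStar-$\sigma\mathcal{K}$ with $|X| < \mathfrak{b}$. Given a sequence $\{\mathcal{U}_n : n \in \omega\}$ of open covers and a sequence $\{D_n : n \in \omega\}$ of dense sets, I first apply the aStar-$\sigma\mathcal{K}$ property to each pair $(\mathcal{U}_n, D_n)$ to obtain, for each $n$, a set $E_n = \bigcup_{m \in \omega} E_n^m \subseteq D_n$ with each $E_n^m \in \mathcal{K}$ and $St(E_n, \mathcal{U}_n) = X$.

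Next I enumerate $X = \{x_\alpha : \alpha < \kappa\}$ with $\kappa < \mathfrak{b}$ and define, exactly as before, $f_\alpha(n) = \min\{m \in \omega : x_\alpha \in St(E_n^m, \mathcal{U}_n)\}$; this is well defined because $St(E_n, \mathcal{U}_n) = X$ guarantees $x_\alpha$ lies in the star of some finite stage. Here is the key point where $\mathfrak{b}$ enters instead of $\mathfrak{d}$: since the family $\{f_\alpha : \alpha < \kappa\}$ has size $\kappa < \mathfrak{b}$, it is \emph{bounded} in the eventual-domination order, so there is a single $g \in \omega^\omega$ with $f_\alpha \le^* g$ for every $\alpha < \kappa$. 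Setting $C_n = \bigcup_{m \le g(n)} E_n^m$, each $C_n$ is a finite union of elements of $\mathcal{K}$ and hence $C_n \in \mathcal{K}$ by the closure hypothesis, and clearly $C_n \subseteq D_n$.

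It remains to check that $\{St(C_n, \mathcal{U}_n) : n \in \omega\}$ is a $\gamma$-cover, not merely a cover. Fix $x_\alpha \in X$. Since $f_\alpha \le^* g$, there is $N$ such that $f_\alpha(n) \le g(n)$ for all $n \ge N$; for each such $n$, by definition of $f_\alpha(n)$ we have $x_\alpha \in St(E_n^{f_\alpha(n)}, \mathcal{U}_n) \subseteq St(C_n, \mathcal{U}_n)$, because $E_n^{f_\alpha(n)} \subseteq C_n$ and the star operation is monotone in its first argument. Thus $x_\alpha$ belongs to all but finitely many members of the family, which is precisely the $\gamma$-cover condition. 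I should also note the family is infinite (we may assume the $\mathcal{U}_n$ are infinite covers, or pass to an infinite subfamily; this is the standard convention for $\Gamma$). Hence $X$ is $selSS^*_\mathcal{K}(\mathcal{O}, \Gamma)$.

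The main difference from Theorem \ref{general01}, and the only genuinely new point, is the switch from ``dominating'' to ``bounding'': there we needed $g$ to \emph{escape} each $f_\alpha$ infinitely often (giving a cover), whereas here we need $g$ to \emph{eventually dominate} each $f_\alpha$ (giving a $\gamma$-cover). The bounding number is precisely the threshold below which a uniform such $g$ exists, so no real obstacle arises beyond correctly tracking the direction of the inequality and verifying the eventual-containment that upgrades ``cover'' to ``$\gamma$-cover''.
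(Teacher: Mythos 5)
Your proof is correct and is exactly the argument the paper intends: the paper states Theorem \ref{generalHurewicz} with only the remark ``with similar ideas it also holds true,'' and your write-up carries out precisely that adaptation of the proof of Theorem \ref{general01}, replacing the escaping function (non-domination, from $\kappa<\mathfrak{d}$) by a $\le^*$-bound on $\{f_\alpha:\alpha<\kappa\}$ (from $\kappa<\mathfrak{b}$) so that membership in $St(C_n,\mathcal{U}_n)$ holds for all but finitely many $n$, upgrading ``cover'' to ``$\gamma$-cover.'' Your handling of the side points (well-definedness of $f_\alpha$, closure of $\mathcal{K}$ under finite unions, $C_n\subseteq D_n$, and the infiniteness convention for $\Gamma$) is also sound, so nothing further is needed.
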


The following diagram summarizes the results presented in Theorem \ref{general01} and Theorem \ref{generalHurewicz}:

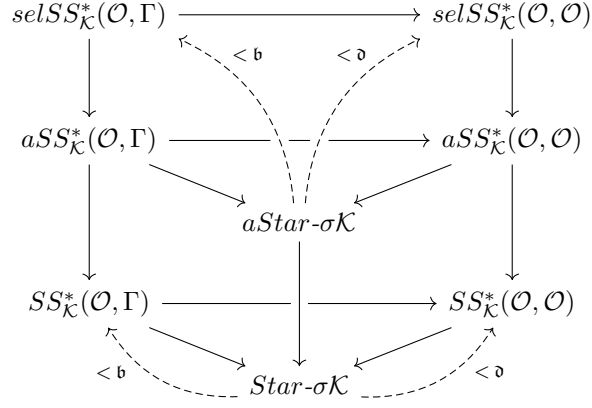
\begin{figure}[h!]
\[
\begin{tikzcd}[row sep=1.5em, column sep = 2em]
sel SS^*_\mathcal{K}(\mathcal{O},\Gamma) \arrow[dd]  \arrow[rr] & & selSS^*_\mathcal{K}(\mathcal{O},\mathcal{O})  \arrow[dd]    \\
& & \\
aSS^*_\mathcal{K}(\mathcal{O},\Gamma) \arrow[rr]  \arrow[dr]  \arrow[dd] & & aSS^*_\mathcal{K}(\mathcal{O},\mathcal{O}) \arrow[dd]  \arrow[dl]   \\
& aStar\textit{-}\sigma\mathcal{K}    \arrow[uuul, crossing over,swap, "< \, \mathfrak{b}" near end, bend right, dashed]  \arrow[uuur,  crossing over,  "< \, \mathfrak{d}" near end, bend left, dashed] & \\
SS^*_\mathcal{K}(\mathcal{O},\Gamma) \arrow[rr] \arrow[dr] & & SS^*_\mathcal{K}(\mathcal{O},\mathcal{O})       \arrow[dl]  \\
& Star\textit{-}\sigma\mathcal{K}  \arrow[uu,  <- ,crossing over] \arrow[ur , swap, "< \, \mathfrak{d}", near end,bend right, dashed] \arrow[ul , "< \, \mathfrak{b}" near end,bend left, dashed]  & 
\end{tikzcd}
\]
\caption{$\mathcal{K}$ is closed under finite unions.} 
\end{figure}

Observe that the family of ``one-point" subsets of a space $X$ is not closed under finite unions. Therefore, an analogous result as Theorems \ref{general01} and \ref{generalHurewicz}, for the Rothberger case is not possible. Nevertheless, the following holds:

\begin{proposition}
\label{aSSLSMALLcovMimpliesSelSSM}
For any space $X$, if $X$ is absolutely strongly star-Lindel\"of and it has size less than $cov(\mathcal{M})$, then $X$ is selectively strongly star-Rothberger.
\end{proposition}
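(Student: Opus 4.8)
The plan is to follow the template of the proof of Theorem~\ref{general01}(2), replacing the dominating-number argument by the combinatorial characterization of $cov(\mathcal{M})$. First I would fix a sequence $\{\mathcal{U}_n:n\in\omega\}$ of open covers of $X$ together with a sequence $\{D_n:n\in\omega\}$ of dense subsets of $X$. Applying the absolutely strongly star-Lindel\"of property to each pair $(\mathcal{U}_n,D_n)$, I obtain for every $n\in\omega$ a countable set $C_n=\{c_n^m:m\in\omega\}\subseteq D_n$ with $St(C_n,\mathcal{U}_n)=X$. Since $St(C_n,\mathcal{U}_n)=\bigcup_{m\in\omega}St(c_n^m,\mathcal{U}_n)$, for every point $x\in X$ and every $n$ the set $\{m:x\in St(c_n^m,\mathcal{U}_n)\}$ is nonempty, so I can define $h_x\in\omega^\omega$ by $h_x(n)=\min\{m:x\in St(c_n^m,\mathcal{U}_n)\}$.

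The next step is the selection of indices, and this is exactly where the Rothberger case departs from the Menger argument. The obstruction flagged before the statement is that the one-point family is not closed under finite unions, so unlike in Theorem~\ref{general01} I cannot pool finitely many chosen points into a single member of $\mathcal{K}$; I must commit to a single point $x_n=c_n^{g(n)}$ from each $D_n$, governed by a single function $g\in\omega^\omega$. To make $\{St(x_n,\mathcal{U}_n):n\in\omega\}$ cover $X$ it suffices that for every $x\in X$ there is at least one $n$ with $g(n)=h_x(n)$, since then $x\in St(c_n^{h_x(n)},\mathcal{U}_n)=St(x_n,\mathcal{U}_n)$. The family $\{h_x:x\in X\}$ has cardinality at most $|X|<cov(\mathcal{M})$, so the crucial input is the characterization of $cov(\mathcal{M})$ as the least size of a family $F\subseteq\omega^\omega$ for which no single $g\in\omega^\omega$ agrees infinitely often with every member of $F$. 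Equivalently, any family of size less than $cov(\mathcal{M})$ admits a function $g\in\omega^\omega$ that agrees with each of its members on an infinite set of coordinates. Applying this to $\{h_x:x\in X\}$ yields $g$ such that $\{n:g(n)=h_x(n)\}$ is infinite, hence nonempty, for every $x$.

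Finally I would set $x_n=c_n^{g(n)}\in D_n$ and verify the covering condition: given $x\in X$, choose any $n$ with $g(n)=h_x(n)$, whence $x\in St(x_n,\mathcal{U}_n)$. Thus $\{St(x_n,\mathcal{U}_n):n\in\omega\}$ is an open cover of $X$, which is precisely $selSS^*_1(\mathcal{O},\mathcal{O})$, so $X$ is selectively strongly star-Rothberger.

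I expect the only delicate point to be invoking the correct form of the combinatorial characterization of $cov(\mathcal{M})$, namely the ``infinitely-often equal'' dual. This comes from the observation that each set $M_f=\{g:g(n)\neq f(n)\text{ for all but finitely many }n\}$ is meager in $\omega^\omega$, so fewer than $cov(\mathcal{M})$ of them cannot cover $\omega^\omega$; the surviving $g$ is infinitely-often equal to each $f$ in the given family. Everything else is a direct transcription of the small-space argument, with the finite-union step of Theorem~\ref{general01} collapsed to agreement at a single coordinate.
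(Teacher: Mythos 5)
Your proposal is correct and follows essentially the same route as the paper's own proof: apply the $aSSL$ property to each pair $(\mathcal{U}_n,D_n)$ to get countable kernels, define the minimal-index functions $h_x$ (the paper's $f_\alpha$), and use the fact that any family of size less than $cov(\mathcal{M})$ admits a single $g\in\omega^\omega$ agreeing infinitely often with each member, which yields the selected points $c_n^{g(n)}$. The only difference is cosmetic: the paper simply invokes this ``guessing'' characterization of $cov(\mathcal{M})$, whereas you also sketch its standard proof via the meagerness of the eventually-different sets $M_f$.
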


\begin{proof}
Assume $X = \{x_\alpha: \alpha < \kappa\}$ is $aSSL$ such that $\kappa < cov(\mathcal{M})$. Let $(\mathcal{U}_n: n\in \omega)$ be any sequence of open covers and let $(D_n:n\in \omega)$ be any sequence of dense subsets of $X$. For each $n \in \omega$, fix  $E_n =\{d_q^n:q\in \omega\}\in [D_n]^{\leq \omega}$ such that $St(E_n,\mathcal{U}_n) = X$.\\
For each $\alpha < \kappa$ define $f_\alpha\in \omega^\omega$ such that for $n\in\omega$, $f_\alpha(n)=min\{m\in\omega:x_\alpha \in St(d^n_m,\mathcal{U}_n)\}$. Since $|\{f_\alpha:\alpha < \kappa\}| < cov(\mathcal{M})$, there exists $g \in \omega^\omega$ such that $g$ guesses $\{f_\alpha:\alpha < \kappa\}$, i.e. for each $\alpha < \kappa$, $\{n\in \omega : g(n) = f_\alpha(n)\}$ is infinite. It follows that $\{St(d_n^{g(n)},\mathcal{U}_n):n\in \omega\}$ is an open cover of $X$.
\end{proof}

\begin{remark}
It would be interesting to investigate what happens when we change $\mathcal{O}$ or $\Gamma$ in Theorems \ref{general01} and \ref{generalHurewicz} by other kind of subcollections of open covers.
\end{remark}

\subsection{Consequences of General Theorems}\label{corollaries}

Here, we collect some immediate consequences of Theorems \ref{general01} and \ref{generalHurewicz} proved in the previous subsection.

\begin{corollary}
\label{trivilind}
Let $X$ be any space of size less than $\mathfrak{d}$. Then
\begin{enumerate}[nosep]
\item \cite{SM} If $X$ is $SSL$, then $X$ is $SS^*_{fin}(\mathcal{O},\mathcal{O})$ (i.e. $SSM$).
\item If $X$ is $aSSL$, then $X$ is $selSS^*_{fin}(\mathcal{O},\mathcal{O})$ (i.e. $SelSSM$).$^2$
\item If $X$ is aStar-$\sigma$-$cd$, then $X$ is $selSS^*_{cd}(\mathcal{O},\mathcal{O})$ (i.e. strongly selectively $(a)$).
\end{enumerate}
\end{corollary}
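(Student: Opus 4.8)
The plan is to recognize all three items as direct instances of Theorem \ref{general01}, obtained by choosing the family $\mathcal{K}$ appropriately and then checking, in each case, two things: that $\mathcal{K}$ is closed under finite unions (the running hypothesis of Theorem \ref{general01}), and that the covering property assumed on $X$ coincides with, or implies, the corresponding star-$\sigma\mathcal{K}$ condition. Since $X$ is already assumed to have size less than $\mathfrak{d}$, the quantitative work — dominating the functions $f_\alpha$ — is entirely absorbed into Theorem \ref{general01}, so no estimates need to be redone here.

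For items (1) and (2) I would take $\mathcal{K} = [X]^{<\omega}$, the family of all finite subsets of $X$, which is trivially closed under finite unions. The only point to verify is that $SSL$ is the same as star-$\sigma$-fin, and $aSSL$ the same as aStar-$\sigma$-fin. This is immediate: if $\mathcal{U}$ is an open cover and $C = \{c_k : k \in \omega\}$ is a countable set with $St(C,\mathcal{U}) = X$ witnessing $SSL$, then writing $C = \bigcup_{k\in\omega}\{c_k\}$ exhibits $C$ as a countable union of (singleton, hence finite) members of $\mathcal{K}$, so $C$ is a $\sigma\mathcal{K}$ kernel of $X$ with respect to $\mathcal{U}$; conversely any $\sigma$-fin set is countable, so the two notions agree. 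The $aSSL$ case is identical, with the countable set taken inside the prescribed dense set $D$. Applying Theorem \ref{general01}(1) then gives $SS^*_{fin}(\mathcal{O},\mathcal{O})$ ($=SSM$) for item (1), and Theorem \ref{general01}(2) gives $selSS^*_{fin}(\mathcal{O},\mathcal{O})$ ($=selSSM$) for item (2).

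For item (3) I would take $\mathcal{K}$ to be the family of closed discrete subsets of $X$. Then the hypothesis aStar-$\sigma$-$cd$ is by definition precisely aStar-$\sigma\mathcal{K}$ for this $\mathcal{K}$, so no translation is needed, and the conclusion $selSS^*_{cd}(\mathcal{O},\mathcal{O})$ is exactly strongly selectively $(a)$. The single thing to confirm before invoking Theorem \ref{general01}(2) is that closed discrete sets are closed under finite unions: a finite union of closed sets is closed, and a finite union of sets with no limit points in $X$ again has no limit point in $X$, since any such limit point would be a limit point of one of the finitely many pieces. Hence $\mathcal{K}$ meets the hypothesis of Theorem \ref{general01} and item (3) follows at once. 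The only (very mild) obstacle in the whole argument is the bookkeeping of matching the defining covering properties to the $\sigma\mathcal{K}$ language and checking closure under finite unions; there is no genuine analytic difficulty.
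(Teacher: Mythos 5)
Your proposal is correct and follows essentially the same route as the paper: for items (1) and (2) take $\mathcal{K}$ to be the finite subsets and identify $SSL$/$aSSL$ with star-$\sigma\mathcal{K}$/aStar-$\sigma\mathcal{K}$, and for item (3) take $\mathcal{K}$ to be the closed discrete subsets, then invoke Theorem \ref{general01} in each case. Your extra verifications (countable sets as countable unions of singletons, and closure of closed discrete sets under finite unions) are exactly the observations the paper leaves implicit.
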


\begin{proof}
To prove (1) and (2), let $\mathcal{K} = fin$ be the family of all finite subsets of $X$ and observe that $X$ is $SSL$ if and only if it is star-$\sigma\mathcal{K}$ and $X$ is $aSSL$ if and only if it is aStar-$\sigma\mathcal{K}$. Since $fin$ is closed under finite unions, apply Theorem \ref{general01} to get the result. To prove (3) let $\mathcal{K} = cd$ be the family of all closed discrete subsets of $X$ and apply Theorem \ref{general01}.
\end{proof}

Observe that Corollary  \ref{trivilind} (1) is Sakai's Proposition \ref{SSL+dimpliesSSM}. Corollary \ref{trivilind} (2) generalizes Proposition 9 in \cite{BCS} where Bonanzinga, Cuzzupe and Sakai show that $aSSL$ spaces of size less than $\mathfrak{d}$ are $selSSL$. Given that $\Psi$-spaces are $aStar$-$\sigma$-$cd$, Corollary \ref{trivilind} (3) generalizes da Silva's Theorem \ref{DasilvaA} (1).\\

Similarly to Corollary \ref{trivilind}, we have the following for the Hurewicz case:

\begin{corollary}
\label{trivilinb}
Let $X$ be any space of size less than $\mathfrak{b}$. Then
\begin{enumerate}[nosep]
\item If $X$ is $SSL$, then $X$ is $SS^*_{fin}(\mathcal{O},\Gamma)$ (i.e. $SSH$).
\item If $X$ is $aSSL$, then $X$ is $selSS^*_{fin}(\mathcal{O},\Gamma)$ (i.e. $SelSSH$).
\item If $X$ is aStar-$\sigma$-$cd$, then $X$ is $selSS^*_{cd}(\mathcal{O},\Gamma)$.
\end{enumerate}
\end{corollary}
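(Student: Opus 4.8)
The plan is to run exactly the argument used for Corollary \ref{trivilind}, replacing the appeal to Theorem \ref{general01} by its Hurewicz counterpart Theorem \ref{generalHurewicz} and the dominating number $\mathfrak{d}$ by the bounding number $\mathfrak{b}$. Since Theorem \ref{generalHurewicz} produces $\gamma$-covers rather than arbitrary open covers, the selection principles it yields carry $\Gamma$ in the second coordinate, which is precisely what items (1)--(3) demand. So the whole proof reduces to checking that the hypotheses of Theorem \ref{generalHurewicz} are met after the appropriate choice of $\mathcal{K}$.

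For items (1) and (2), I would first take $\mathcal{K} = fin$, the family of all finite subsets of $X$. The key (routine) observation is that $X$ is $SSL$ if and only if $X$ is star-$\sigma$-$fin$, and $X$ is $aSSL$ if and only if $X$ is aStar-$\sigma$-$fin$: a countable union of finite sets is countable, and conversely any countable witnessing set is the countable union of its own singletons, each of which is finite (and contained in the relevant dense set in the absolute case). Because $fin$ is trivially closed under finite unions, Theorem \ref{generalHurewicz} applies to a space of size less than $\mathfrak{b}$, giving $SS^*_{fin}(\mathcal{O},\Gamma)$ (i.e. $SSH$) from the star-$\sigma$-$fin$ hypothesis and $selSS^*_{fin}(\mathcal{O},\Gamma)$ (i.e. $SelSSH$) from the aStar-$\sigma$-$fin$ hypothesis. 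For item (3) I would instead take $\mathcal{K} = cd$, the family of all closed discrete subsets, so that the aStar-$\sigma$-$cd$ hypothesis is literally the hypothesis of Theorem \ref{generalHurewicz} for $\mathcal{K}=cd$, and $selSS^*_{cd}(\mathcal{O},\Gamma)$ follows at once.

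Since the entire argument is an invocation of Theorem \ref{generalHurewicz}, there is no genuine obstacle; the only facts requiring verification are the two translations above and the closure of $fin$ and $cd$ under finite unions. The single point where one must be slightly careful is that $cd$ is closed under finite unions: a finite union of closed sets is closed, and in a $T_1$ (in particular regular) space a finite union of closed discrete sets is again discrete, since each of its points can be separated from the remaining finitely many closed sets. This is exactly the step that uses the standing regularity assumption on $X$, and it is the only substantive thing to check.
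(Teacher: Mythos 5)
Your proposal is correct and is exactly the paper's argument: the paper proves this corollary by the words ``Similarly to Corollary \ref{trivilind}'', i.e.\ by choosing $\mathcal{K}=fin$ for (1)--(2) and $\mathcal{K}=cd$ for (3) and invoking Theorem \ref{generalHurewicz} in place of Theorem \ref{general01}, just as you do. One tiny remark: your closure check for $cd$ needs no separation axiom at all --- if $x$ lies in one closed discrete set and not the other, the complement of the other (being open) already separates it, so the appeal to regularity is superfluous.
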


Observe that Corollary  \ref{trivilinb} (1) is precisely Proposition \ref{SSL+bimpliesSSH}. Corollary \ref{trivilind} (2) and Corollary \ref{trivilinb} (2) improve Song's Remark 2.5 in \cite{YKS2013}, and Remark 2.6 (see also Remark 2.4) in \cite{YKS02}: in $\Psi$-spaces the properties strongly star-Menger and absolutely strongly star-Menger are equivalent and the properties strongly star-Hurewicz and absolutely strongly star-Hurewicz are equivalent. Furthermore, they allow us to provide the following results that can be seen as partial analogous of Theorems \ref{CharacterizationofSSM} and \ref{CharacterizationofSSH} for absolutely strongly star Lindel\"of spaces:

\begin{proposition}
\label{CharacterizationofaSSMYUZ}
Let $X$ be a topological space of the form $Y\cup Z$, where $Y\cap Z=\emptyset$, $Z$ is a $\sigma$-compact subspace and $Y$ is a closed discrete set. If $X$ is absolutely strongly star-Lindel\"{o}f and $|Y|<\mathfrak{d}$, then $X$ is selectively strongly star-Menger.
\end{proposition}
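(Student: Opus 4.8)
The plan is to imitate the dominating-number argument from the proof of Theorem \ref{general01} on the closed discrete part $Y$, while using $\sigma$-compactness to absorb the (possibly large) part $Z$. Note that we cannot simply invoke Corollary \ref{trivilind}(2), because $|X|$ need not lie below $\mathfrak{d}$: we are only told $|Y| < \mathfrak{d}$, and $Z$ may be as large as the reals. So I would dispatch $Y$ and $Z$ by two different mechanisms and then amalgamate the two choices into a single finite selection from each dense set.

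First I would set up the data. Write $Z = \bigcup_{k\in\omega} Z_k$ with each $Z_k$ compact and, replacing $Z_k$ by $Z_0\cup\cdots\cup Z_k$, assume the $Z_k$ are increasing. Enumerate $Y = \{y_\alpha : \alpha < \kappa\}$ with $\kappa < \mathfrak{d}$. Given a sequence $\{\mathcal{U}_n : n\in\omega\}$ of open covers and a sequence $\{D_n : n\in\omega\}$ of dense sets, use that $X$ is absolutely strongly star-Lindel\"of to choose, for each $n$, a countable set $E_n = \{d^n_m : m\in\omega\} \subseteq D_n$ with $St(E_n,\mathcal{U}_n) = X$. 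The key observation is that $St(E_n,\mathcal{U}_n) = \bigcup_{m} St(d^n_m,\mathcal{U}_n)$, so $\{St(d^n_m,\mathcal{U}_n) : m\in\omega\}$ is an open cover of $X$, hence in particular of each compact set $Z_n$.

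Next I would handle the two parts separately. For $Z$: by compactness of $Z_n$, pick $l_n\in\omega$ with $Z_n \subseteq St(\{d^n_m : m\le l_n\},\mathcal{U}_n)$. For $Y$: for each $\alpha<\kappa$ define $f_\alpha\in\omega^\omega$ by $f_\alpha(n) = \min\{m : y_\alpha \in St(d^n_m,\mathcal{U}_n)\}$, which is well defined since $y_\alpha\in X = \bigcup_m St(d^n_m,\mathcal{U}_n)$. As $|\{f_\alpha : \alpha<\kappa\}| < \mathfrak{d}$, this family is not dominating, so there is $g\in\omega^\omega$ with $g\not\le^* f_\alpha$ for every $\alpha$. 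Now I would merge the obligations by setting $F_n = \{d^n_m : m \le \max(g(n), l_n)\}$, a finite subset of $D_n$.

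Finally I would verify that $\{St(F_n,\mathcal{U}_n):n\in\omega\}$ covers $X$. Any $z\in Z$ lies in some $Z_k$; since the $Z_n$ increase, $z\in Z_n$ for all $n\ge k$, and for such $n$ we have $z \in Z_n \subseteq St(\{d^n_m:m\le l_n\},\mathcal{U}_n) \subseteq St(F_n,\mathcal{U}_n)$. For $y_\alpha\in Y$, the relation $g\not\le^* f_\alpha$ yields infinitely many $n$ with $f_\alpha(n) < g(n) \le \max(g(n),l_n)$, whence $y_\alpha \in St(d^n_{f_\alpha(n)},\mathcal{U}_n) \subseteq St(F_n,\mathcal{U}_n)$. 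Thus the stars cover $X = Y\cup Z$, and $X$ is selectively strongly star-Menger. The main obstacle is precisely this amalgamation step: one must ensure that a single finite selection $F_n\subseteq D_n$ can simultaneously discharge the compactness obligation for $Z_n$ and the domination obligation for $Y$. This works because both obligations only require $F_n$ to contain an initial segment of the fixed enumeration $E_n$, so taking the larger threshold $\max(g(n),l_n)$ suffices.
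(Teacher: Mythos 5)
Your proof is correct and is essentially the argument the paper intends: the paper states this proposition without a written proof, presenting it as a consequence of Corollary \ref{trivilind}(2) (i.e.\ of the dominating-family technique of Theorem \ref{general01}) in the spirit of Theorems \ref{CharacterizationofSSM} and \ref{CharacterizationofSSH} of \cite{CGS}, and that is exactly what you do --- run the $\mathfrak{d}$-argument only over the closed discrete part $Y$ while absorbing the increasing compact pieces of $Z$ via initial segments of the countable selections $E_n\subseteq D_n$. Your explicit remark that Corollary \ref{trivilind}(2) cannot be invoked as a black box (since $|X|$ need not be less than $\mathfrak{d}$) and the $\max(g(n),l_n)$ amalgamation supply precisely the details the paper leaves implicit.
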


\begin{proposition}
\label{CharacterizationofaSSHYUZ}
Let $X$ be a topological space of the form $Y\cup Z$, where $Y\cap Z=\emptyset$, $Z$ is a $\sigma$-compact subspace and $Y$ is a closed discrete set. If $X$ is absolutely strongly star-Lindel\"{o}f and $|Y|<\mathfrak{b}$, then $X$ is selectively strongly star-Hurewicz.
\end{proposition}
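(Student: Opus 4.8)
The plan is to adapt the bounding-number argument behind Theorem \ref{generalHurewicz} together with the compactness reduction used for the $\sigma$-compact part in Theorem \ref{CharacterizationofSSH}, combining them so that the ``large'' piece $Z$ contributes only countably many functions to $\omega^\omega$ while the ``small'' piece $Y$ contributes fewer than $\mathfrak{b}$. Concretely, let $\{\mathcal{U}_n : n \in \omega\}$ be a sequence of open covers of $X$ and $\{D_n : n \in \omega\}$ a sequence of dense subsets of $X$. Since $X$ is $aSSL$, for each $n$ I would choose a countable set $C_n = \{d^n_q : q \in \omega\} \subseteq D_n$ with $St(C_n, \mathcal{U}_n) = X$, and set $E_n^m = \{d^n_q : q \le m\}$, an increasing sequence of finite subsets of $D_n$ with $\bigcup_m E_n^m = C_n$. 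Because the star operator is monotone and $St(C_n,\mathcal{U}_n)=X$, the open sets $St(E_n^m,\mathcal{U}_n)$ increase with $m$ and their union is $X$.

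Next I would produce the relevant functions in $\omega^\omega$. For each $y \in Y$ define $f_y(n) = \min\{m : y \in St(E_n^m, \mathcal{U}_n)\}$; this is well defined by the previous paragraph, and since $|Y| < \mathfrak{b}$ the family $\{f_y : y \in Y\}$ has size less than $\mathfrak{b}$. For the $\sigma$-compact part, write $Z = \bigcup_k Z_k$ with each $Z_k$ compact. Fixing $n$ and $k$, the sets $St(E_n^m, \mathcal{U}_n)$ form an increasing open cover of the compact set $Z_k$, so $Z_k \subseteq St(E_n^m, \mathcal{U}_n)$ for some $m$; let $h_k(n)$ be the least such $m$. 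This yields only \emph{countably many} functions $h_k \in \omega^\omega$. Since $\mathfrak{b} > \aleph_0$, the combined family $\{f_y : y \in Y\} \cup \{h_k : k \in \omega\}$ still has size less than $\mathfrak{b}$, hence is $\le^*$-bounded: there is $g \in \omega^\omega$ with $f_y \le^* g$ for all $y \in Y$ and $h_k \le^* g$ for all $k$.

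Finally I would set $F_n = E_n^{g(n)} \subseteq D_n$, a finite set, and verify that $\{St(F_n, \mathcal{U}_n) : n \in \omega\}$ is a $\gamma$-cover. For $y \in Y$, the relation $f_y \le^* g$ gives $E_n^{f_y(n)} \subseteq E_n^{g(n)} = F_n$ for all but finitely many $n$, whence $y \in St(E_n^{f_y(n)}, \mathcal{U}_n) \subseteq St(F_n, \mathcal{U}_n)$ for all but finitely many $n$; for $z \in Z$, picking $k$ with $z \in Z_k$ and using $h_k \le^* g$ gives $z \in Z_k \subseteq St(E_n^{h_k(n)}, \mathcal{U}_n) \subseteq St(F_n, \mathcal{U}_n)$ for all but finitely many $n$. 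Thus every point of $X$ lies in all but finitely many $St(F_n, \mathcal{U}_n)$, witnessing $selSS^*_{fin}(\mathcal{O},\Gamma)$, i.e. that $X$ is selectively strongly star-Hurewicz. I expect the only delicate point to be the compactness reduction for $Z$: it is precisely what keeps the contribution of the (possibly large) set $Z$ down to countably many functions, so that the hypothesis $|Y| < \mathfrak{b}$ alone suffices to bound the entire family and produce the dominating $g$ needed for the $\gamma$-cover.
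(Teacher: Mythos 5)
Your proof is correct. Since the paper states Proposition \ref{CharacterizationofaSSHYUZ} without an explicit proof, presenting it as something that Corollary \ref{trivilinb}(2) (absolutely strongly star-Lindel\"of plus size less than $\mathfrak{b}$ implies selectively strongly star-Hurewicz) ``allows'' one to obtain, it is worth recording how your argument relates to that claim: the corollary cannot be invoked verbatim, because only $|Y|<\mathfrak{b}$ is assumed and $|X|=|Y\cup Z|$ may well be at least $\mathfrak{b}$ (as in the Niemytzki-plane-type examples motivating the statement). What is genuinely needed is the hybrid argument you give: the $\le^*$-bounding technique from the proofs of Theorems \ref{general01} and \ref{generalHurewicz}, applied to the fewer-than-$\mathfrak{b}$ functions $f_y$, $y\in Y$, combined with the compactness reduction (the same device used for Theorems \ref{CharacterizationofSSM} and \ref{CharacterizationofSSH} in \cite{CGS}) that compresses the possibly large set $Z$ into countably many functions $h_k$. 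Your key observation that $\{f_y:y\in Y\}\cup\{h_k:k\in\omega\}$ still has size less than $\mathfrak{b}$, hence is bounded by some $g$, and the verification that $F_n=E_n^{g(n)}$ yields a $\gamma$-cover, are both sound (the only standard quibble, which the paper itself also ignores, is that a $\gamma$-cover is formally required to be infinite; one treats $\{St(F_n,\mathcal{U}_n):n\in\omega\}$ as a sequence). Incidentally, your argument never uses that $Y$ is closed discrete or that $Y\cap Z=\emptyset$; those hypotheses matter only for converse directions of the CGS characterizations, so your proof in fact establishes a formally more general statement.
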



\begin{observation}
For $\Psi$-spaces and for the Niemytzki plane, the three properties SSM, aSSM and selSSM are equivalent, and the three properties SSH, aSSH and selSSH are equivalent.$^2$ \footnote{$^2$In private communication with M. Bonanzinga, she informed to the authors that, together with F. Maesano in \cite{BoMa}, they obtained in a direct way a proof of Corollary \ref{trivilind} (2) and the equivalences of $SSM$, $aSSM$ and $selSSM$ for $\Psi$-spaces.}
\end{observation}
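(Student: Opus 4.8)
The plan is to exploit the general implications $\mathrm{selSSM}\to\mathrm{aSSM}\to\mathrm{SSM}$ and $\mathrm{selSSH}\to\mathrm{aSSH}\to\mathrm{SSH}$, which hold in \emph{every} space (they are the instances of Observation \ref{relationship} obtained by taking $D=X$, and they appear in the diagram of absolute and selective versions). Consequently, the three Menger-type (resp. Hurewicz-type) properties collapse on a given space as soon as one closes the loop by establishing the reverse implication $\mathrm{SSM}\to\mathrm{selSSM}$ (resp. $\mathrm{SSH}\to\mathrm{selSSH}$). I would treat the two families of spaces separately, because their cardinality profiles force different earlier results into play.

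For a $\Psi$-space $\Psi(\mathcal{A})$ the first step is to observe that it is always $aSSL$: every dense subset $D$ must contain the set $\omega$ of isolated points, and for any open cover $\mathcal{U}$ one has $St(\omega,\mathcal{U})=\Psi(\mathcal{A})$, since each $A\in\mathcal{A}$ is infinite and hence every basic neighbourhood of $A$ meets $\omega$; thus $C=\omega\subseteq D$ witnesses $aSSL$. Now if $\Psi(\mathcal{A})$ is $\mathrm{SSM}$, Proposition \ref{BMCharacterization}(1) gives $|\mathcal{A}|<\mathfrak{d}$, whence $|\Psi(\mathcal{A})|=|\mathcal{A}|+\aleph_0<\mathfrak{d}$ (as $\mathfrak{d}$ is uncountable); being $aSSL$ of size $<\mathfrak{d}$, Corollary \ref{trivilind}(2) yields $\mathrm{selSSM}$. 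The Hurewicz case is identical, using Proposition \ref{BMCharacterization}(2) to obtain $|\mathcal{A}|<\mathfrak{b}$ and then Corollary \ref{trivilinb}(2).

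For the Niemytzki plane the size argument is unavailable, since the open upper half-plane $Z$ already has cardinality $\mathfrak{c}$; the correct tools are instead Propositions \ref{CharacterizationofaSSMYUZ} and \ref{CharacterizationofaSSHYUZ}. Write the space as $Y\cup Z$, where $Y$ is the set of boundary points (a closed discrete set) and $Z=\{(x,y):y>0\}$ carries its Euclidean subspace topology, so that $Z$ is $\sigma$-compact. First I would check $aSSL$: given a dense $D$ and an open cover $\mathcal{U}$, the set $D\cap Z$ is dense in the separable space $Z$, hence contains a countable $C$ dense in $Z$; every basic tangent-disc neighbourhood of a boundary point is a nonempty open subset of $Z$ and so meets $C$, and likewise each point of $Z$ lies in a member of $\mathcal{U}$ meeting $C$, giving $St(C,\mathcal{U})=Y\cup Z$. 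Thus the space is $aSSL$, and \emph{a fortiori} $SSL$. If it is now $\mathrm{SSM}$, Theorem \ref{CharacterizationofSSM} (applicable since the space is $SSL$ and of the form $Y\cup Z$) forces $|Y|<\mathfrak{d}$, and then Proposition \ref{CharacterizationofaSSMYUZ} delivers $\mathrm{selSSM}$; the Hurewicz case uses Theorem \ref{CharacterizationofSSH} and Proposition \ref{CharacterizationofaSSHYUZ} with $\mathfrak{b}$ in place of $\mathfrak{d}$.

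The genuinely load-bearing steps are the two $aSSL$ verifications and, for the Niemytzki plane, the recognition that one must route through the $Y\cup Z$ characterizations rather than through a global cardinality bound — this is precisely where the hypotheses that $Z$ is $\sigma$-compact and $Y$ is closed discrete are essential. Everything else is a bookkeeping assembly of the chain $\mathrm{selSS}\to\mathrm{aSS}\to\mathrm{SS}$ together with the reverse implication just established.
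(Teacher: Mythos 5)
Your proposal is correct and follows essentially the same route the paper intends: for $\Psi$-spaces, combine Proposition \ref{BMCharacterization} with the $aSSL$ property and Corollaries \ref{trivilind}(2)/\ref{trivilinb}(2), and for the Niemytzki plane, route through Theorems \ref{CharacterizationofSSM}/\ref{CharacterizationofSSH} and Propositions \ref{CharacterizationofaSSMYUZ}/\ref{CharacterizationofaSSHYUZ}, closing the cycle with the trivial implications $selSS \rightarrow aSS \rightarrow SS$. The only cosmetic slip is calling a basic tangent-disc neighbourhood of a boundary point ``a nonempty open subset of $Z$'' (it contains the tangent point itself), but since the open disc part lies in $Z$ and meets $C$, the star argument goes through unchanged.
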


\newpage

The following diagram sum things up for the Menger, Rothberger and Hurewicz cases:

\begin{figure}[h!]
\[
\begin{tikzcd}[row sep=2em, column sep = 2.5em]
selSSH \arrow[rr] \arrow[dd] && selSSM  \arrow[rr,<-]  \arrow[dr] \arrow[dd] &&
  selSSR \arrow[dd ] \\
&& & selSSL   &&  \\
aSSH \arrow[rr] \arrow[dd] && aSSM \arrow[rr,<-] \arrow[dr]  \arrow[dd]  && aSSR \arrow[dd]  \\
&& & aSSL  \arrow[uuulll, crossing over, bend left, swap, "< \, \mathfrak{b}" near end, dashed] \arrow[uu,<-,crossing over]  \arrow[uuul, crossing over, "< \, \mathfrak{d}" near end, dashed] \arrow[uuur, crossing over,swap, "< \, cov(\mathcal{M})" near start, dashed]&&  \\
SSH \arrow[rr] && SSM \arrow[rr,<-] \arrow[dr]  && SSR \arrow[dl]   \\
&& & SSL \arrow[uu,<-,crossing over] \arrow[ur , swap, "< \, cov(\mathcal{M})", near end,bend right, dashed] \arrow[ul , "< \, \mathfrak{d}" near end,bend left, dashed] \arrow[ulll, bend left, swap, "< \, \mathfrak{b}" near end, dashed]     &&  
\end{tikzcd}
\]
\caption{Equivalences for small spaces} 
\end{figure}
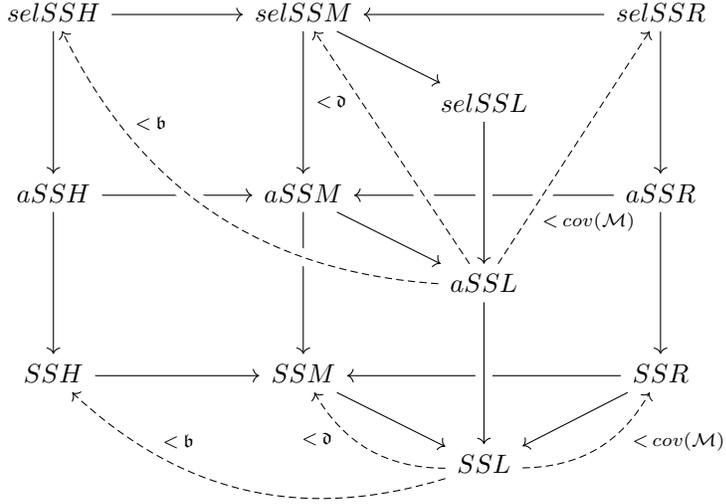

\begin{observation}
\label{T1SelSSMimpliesStronglysel(a)}
Recall that in $T_1$ spaces, finite sets are closed and discrete. Hence, if $X$ a $T_1$ selectively strongly star-Menger space, then $X$ is strongly selectively $(a)$.
\end{observation}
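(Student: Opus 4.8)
The plan is to unwind the two definitions and observe that they share exactly the same quantifier structure, differing only in the family $\mathcal{K}$ from which the selected sets are drawn: selectively strongly star-Menger is $selSS^*_{fin}(\mathcal{O},\mathcal{O})$, while strongly selectively $(a)$ is $selSS^*_{cd}(\mathcal{O},\mathcal{O})$. Since in a $T_1$ space every finite set is closed and discrete, i.e. $fin \subseteq cd$, any witnessing sequence for the former is automatically a witnessing sequence for the latter, and the implication follows with essentially no work.

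Concretely, I would first fix an arbitrary sequence $\{\mathcal{U}_n : n \in \omega\}$ of open covers of $X$ together with an arbitrary sequence $\{D_n : n \in \omega\}$ of dense subsets of $X$; these are precisely the common inputs demanded by both $selSS^*_{fin}(\mathcal{O},\mathcal{O})$ and $selSS^*_{cd}(\mathcal{O},\mathcal{O})$. Since $X$ is selectively strongly star-Menger, I can extract a sequence $\{F_n : n \in \omega\}$ of finite sets with $F_n \subseteq D_n$ for every $n \in \omega$ and such that $\{St(F_n,\mathcal{U}_n) : n \in \omega\}$ is an open cover of $X$.

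The key step is then to apply the $T_1$ hypothesis: every finite subset of a $T_1$ space is closed and discrete, so each $F_n$ lies in the family $cd$ of closed discrete subsets of $X$. Hence the very same sequence $\{F_n : n \in \omega\}$ satisfies all the requirements in the definition of $selSS^*_{cd}(\mathcal{O},\mathcal{O})$ for the chosen covers and dense sets, which shows that $X$ is strongly selectively $(a)$.

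There is no genuine obstacle here; the only point that needs to be checked is the inclusion $fin \subseteq cd$ in $T_1$ spaces, which is exactly the elementary fact recalled in the statement. Thus the observation is immediate once the two selection principles are written side by side.
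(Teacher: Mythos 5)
Your proof is correct and is precisely the argument the paper intends: the observation's justification is exactly that in a $T_1$ space the family of finite sets is contained in the family of closed discrete sets, so any witnessing sequence for $selSS^*_{fin}(\mathcal{O},\mathcal{O})$ already witnesses $selSS^*_{cd}(\mathcal{O},\mathcal{O})$. Nothing is missing; your unwinding of the two selection principles matches the paper's (implicit) reasoning exactly.
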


\begin{observation}
\label{selaimpliesabsstarCD}
If $X$ is selectively $(a)$ then $X$ is absolutely star-$\sigma$-cd, 
\end{observation}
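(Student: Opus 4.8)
The plan is to unpack both definitions and show that the witnessing sets for selectively $(a)$ serve directly as the countable-union-of-closed-discrete kernels required by absolutely star-$\sigma$-cd. Recall that selectively $(a)$ means $X$ satisfies $aSS^*_{cd}(\mathcal{O},\mathcal{O})$: for each \emph{single} open cover $\mathcal{U}$ (presented as a constant sequence $\mathcal{U}_n = \mathcal{U}$) and each dense set $D$, there exists a sequence $\{K_n : n \in \omega\}$ of closed discrete subsets with each $K_n \subseteq D$ such that $\{St(K_n,\mathcal{U}) : n \in \omega\}$ is an open cover of $X$. The target property, absolutely star-$\sigma$-cd, demands that for each dense $D$ and each open cover $\mathcal{U}$ there be a single set $K \subseteq D$, a \emph{countable union} of closed discrete sets, with $St(K,\mathcal{U}) = X$.

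First I would fix an arbitrary dense $D \subseteq X$ and an arbitrary open cover $\mathcal{U}$ of $X$. Applying selectively $(a)$ to the constant sequence $\mathcal{U}_n := \mathcal{U}$ and the constant sequence of dense sets $D_n := D$, I obtain closed discrete sets $K_n \subseteq D$ with $\{St(K_n,\mathcal{U}) : n \in \omega\}$ covering $X$. The natural candidate kernel is $K := \bigcup_{n \in \omega} K_n$. By construction $K \subseteq D$ and $K$ is a countable union of closed discrete sets, i.e.\ a $\sigma$-cd set, so the structural requirement of Definition \ref{starP} is immediate.

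The remaining point is the covering equation $St(K,\mathcal{U}) = X$. This follows because $St(\,\cdot\,,\mathcal{U})$ is monotone: since each $K_n \subseteq K$, we have $St(K_n,\mathcal{U}) \subseteq St(K,\mathcal{U})$ for every $n$, and as $\{St(K_n,\mathcal{U})\}_{n\in\omega}$ already covers $X$, we conclude $X = \bigcup_{n} St(K_n,\mathcal{U}) \subseteq St(K,\mathcal{U}) \subseteq X$, hence equality. Thus $K$ is a $\sigma$-cd kernel of $X$ with respect to $\mathcal{U}$ contained in $D$, which is exactly what absolutely star-$\sigma$-cd requires.

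There is no serious obstacle here; the statement is essentially a matter of matching definitions, with the only substantive observation being that the selective principle supplies a \emph{sequence} of sets whose stars cover, while the star-$\sigma\mathcal{K}$ notion permits exactly such a countable union as its kernel. The mild thing to be careful about is that selectively $(a)$ is stated for sequences of covers and sequences of dense sets, so I must explicitly instantiate both with constant sequences to reduce to the single-cover, single-dense-set situation of absolutely star-$\sigma$-cd; once that reduction is made, monotonicity of the star operation finishes the argument.
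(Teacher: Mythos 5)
Your proof is correct and is exactly the definitional argument the paper has in mind; the paper states this as an Observation without proof, treating it as immediate, and your route (feed the constant sequence $\mathcal{U}_n=\mathcal{U}$ into $aSS^*_{cd}(\mathcal{O},\mathcal{O})$, take $K=\bigcup_{n\in\omega}K_n$, and use monotonicity of the star operator) is that intended argument. One cosmetic slip: selectively $(a)$, i.e.\ $aSS^*_{cd}(\mathcal{O},\mathcal{O})$, takes a \emph{single} dense set rather than a sequence of dense sets (the sequence version is what the paper calls strongly selectively $(a)$), but your argument only ever uses the single-dense-set form, so nothing is affected.
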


By Corollary \ref{trivilind} (2) and (3) and Observations \ref{T1SelSSMimpliesStronglysel(a)} and \ref{selaimpliesabsstarCD} we get the following

\begin{corollary}
For $T_1$ spaces the following holds:
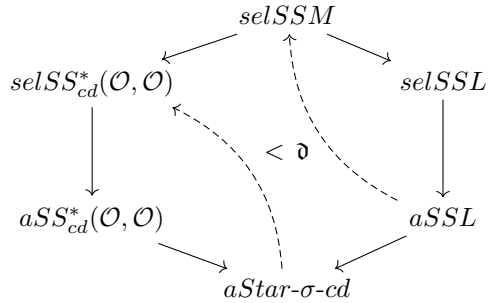
\begin{figure}[h!]
\[
\begin{tikzcd}[row sep=1em, column sep = 1em]
& selSSM   \arrow[dr]  \arrow[dl] & \\
selSS^*_{cd}(\mathcal{O},\mathcal{O}) \arrow[dd] & & selSSL     \\
&  < \, \mathfrak{d}  & \\
aSS^*_{cd}(\mathcal{O},\mathcal{O}) \arrow[dr] & & aSSL   \arrow[uu,<-,crossing over]  \arrow[uuul, bend left, dashed]  \arrow[dl]  \\
& aStar\text{-}\sigma\text{-}cd  \arrow[uuul, bend right,  dashed]  & 
\end{tikzcd}
\]
\caption{The dashed arrows hold if a space has size less than $\mathfrak{d}$.} 
\end{figure}
\end{corollary}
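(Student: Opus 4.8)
The plan is to treat this corollary as a summary diagram and prove it by justifying each arrow separately, sorting them into three groups: the routine general implications that hold for every space, the two arrows that genuinely invoke the $T_1$ hypothesis, and the two dashed arrows that require $|X|<\mathfrak{d}$ and are supplied by Corollary \ref{trivilind}. Nothing new is proved from scratch; each arrow is either a citation or a one-line specialisation of an earlier definition.

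First I would dispose of the routine solid implications. The arrow $selSSM\to selSSL$ comes from feeding a constant sequence of covers into the definition of $selSS^*_{fin}(\mathcal{O},\mathcal{O})$: given a single cover $\mathcal{U}$ and a sequence $\{D_n:n\in\omega\}$ of dense sets, apply $selSSM$ to $\mathcal{U}_n=\mathcal{U}$ to obtain finite $F_n\subseteq D_n$ with $\{St(F_n,\mathcal{U}):n\in\omega\}$ a cover, which is exactly $selSSL$. The arrows $selSS^*_{cd}(\mathcal{O},\mathcal{O})\to aSS^*_{cd}(\mathcal{O},\mathcal{O})$ and $aSS^*_{cd}(\mathcal{O},\mathcal{O})\to$ aStar-$\sigma$-$cd$ are the first two links of the chain in Observation \ref{relationship} specialised to $\mathcal{K}=cd$ and $\mathcal{B}=\mathcal{O}$ (the former by taking $D_n=D$ for a single dense $D$; the latter is precisely Observation \ref{selaimpliesabsstarCD}). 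For $selSSL\to aSSL$ I would, given $\mathcal{U}$ and a dense $D$, apply $selSSL$ with $D_n=D$ to get finite $F_n\subseteq D$ with $\bigcup_{n}St(F_n,\mathcal{U})=X$, and then take the countable set $C=\bigcup_{n}F_n\subseteq D$, which witnesses $aSSL$.

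Next come the two arrows resting on the $T_1$ hypothesis. The implication $selSSM\to selSS^*_{cd}(\mathcal{O},\mathcal{O})$ (strongly selectively $(a)$) is exactly Observation \ref{T1SelSSMimpliesStronglysel(a)}: in a $T_1$ space the finite selectors produced by $selSSM$ are already closed discrete, so they witness $selSS^*_{cd}(\mathcal{O},\mathcal{O})$. For $aSSL\to$ aStar-$\sigma$-$cd$ I would take the countable witness $C=\{c_n:n\in\omega\}\subseteq D$ of $aSSL$ and write $C=\bigcup_{n}\{c_n\}$; since singletons are closed and discrete in a $T_1$ space, $C$ is a $\sigma\mathcal{K}$ kernel of $X$ with respect to $\mathcal{U}$ for $\mathcal{K}=cd$, giving aStar-$\sigma$-$cd$.

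The remaining two arrows are the dashed ones, valid under $|X|<\mathfrak{d}$, and they carry the only substantive content: $aSSL\to selSSM$ is Corollary \ref{trivilind} (2) and the arrow from aStar-$\sigma$-$cd$ to $selSS^*_{cd}(\mathcal{O},\mathcal{O})$ is Corollary \ref{trivilind} (3), i.e.\ the instances of Theorem \ref{general01} for $\mathcal{K}=fin$ and $\mathcal{K}=cd$. Since every arrow thus reduces to a cited result or a trivial specialisation, there is no real obstacle; the only point demanding care is bookkeeping, namely keeping straight which arrows truly need $T_1$ (the two incident to the $cd$-nodes that reduce finite or one-point sets to closed discrete sets) and which truly need the cardinal bound $\mathfrak{d}$ (only the two dashed arrows, through Corollary \ref{trivilind}).
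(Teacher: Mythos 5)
Your proposal is correct and takes essentially the same route as the paper: the paper obtains the diagram precisely from Corollary \ref{trivilind} (2) and (3) (the two dashed arrows) together with Observations \ref{T1SelSSMimpliesStronglysel(a)} and \ref{selaimpliesabsstarCD}, leaving the remaining arrows as routine definitional specialisations, exactly as you do. Your only addition is to make explicit that the arrow from $aSSL$ to aStar-$\sigma$-$cd$ also uses the $T_1$ hypothesis (a countable set is a countable union of singletons, which are closed and discrete only when the space is $T_1$), a point the paper leaves implicit.
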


Other applications of Theorem \ref{general01} involve the following properties (see also \cite{M}):

\begin{itemize}[nosep]
    \item  \cite{YKS01} star-$K$-Menger.
    \item   \cite{YKS07}
    $\sigma$-starcompact. 
    \item   \cite{YKS08}  $\mathcal{L}$-starcompact.
   \item   \cite{YKS09}  $\mathcal{K}$-starcompact
\end{itemize}

\begin{remark}
Note that with the notation of definition \ref{starP} and the notion of star-$\mathbb{P}$ spaces, we have the following well-known implications:
\begin{center}
    star separable $\Leftrightarrow$ star countable (SSL) $\Rightarrow$ star compact $\Rightarrow$ star $\sigma$-compact $\Rightarrow$ star Lindel\"{o}f
\end{center}
Since the properties of the kernels are closed under finite unions, then we have the following
\end{remark}

\begin{proposition}
Let $X$ be a $\sigma$-starcompact space of size less than $\mathfrak{d}$. Then $X$ is star-$K$-Menger.
\end{proposition}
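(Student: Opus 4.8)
The plan is to derive this as a direct consequence of Theorem \ref{general01} by verifying that a $\sigma$-starcompact space is star-$\sigma\mathcal{K}$ for the appropriate choice of $\mathcal{K}$, namely $\mathcal{K}$ = the family of all compact subsets of $X$. Recall that star-$K$-Menger is precisely $SS^*_\mathcal{K}(\mathcal{O},\mathcal{O})$ when $\mathcal{K}$ is the family of compact subsets (this is the content of the reference \cite{YKS01}). Since the family of compact subsets of a space is closed under finite unions, Theorem \ref{general01}(1) applies as soon as we know that $X$ is star-$\sigma\mathcal{K}$ for this $\mathcal{K}$.

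First I would unwind the definition of $\sigma$-starcompact. A space is $\sigma$-starcompact if for every open cover $\mathcal{U}$ there is a $\sigma$-compact subset $A \subseteq X$ with $St(A,\mathcal{U}) = X$; that is, $A = \bigcup_{m \in \omega} A_m$ with each $A_m$ compact. But this is exactly the assertion that $A$ is a $\sigma\mathcal{K}$ kernel of $X$ with respect to $\mathcal{U}$ in the sense of Definition \ref{starP}, with $\mathcal{K}$ the family of compact sets: a countable union of members of $\mathcal{K}$ whose star covers $X$. Hence $\sigma$-starcompact is literally the same as star-$\sigma\mathcal{K}$ for $\mathcal{K}$ = compact subsets. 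This identification is the crux of the argument, and it is essentially the observation flagged in the \textbf{Remark} preceding the statement.

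With these two identifications in hand, the proof reduces to a single invocation. Given $X$ $\sigma$-starcompact of size less than $\mathfrak{d}$, let $\mathcal{K}$ be the family of all compact subsets of $X$, which is closed under finite unions. By the identification above, $X$ is star-$\sigma\mathcal{K}$. Since $|X| < \mathfrak{d}$, Theorem \ref{general01}(1) yields that $X$ satisfies $SS^*_\mathcal{K}(\mathcal{O},\mathcal{O})$, which is precisely the star-$K$-Menger property. This completes the argument.

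The main obstacle is not mathematical depth but making sure the definitional bookkeeping is airtight: I would want to confirm that the paper's (or \cite{YKS01}'s) definition of star-$K$-Menger really is $SS^*_\mathcal{K}(\mathcal{O},\mathcal{O})$ with $\mathcal{K}$ = compact sets, and that ``$\sigma$-starcompact'' uses the kernel formulation matching Definition \ref{starP} rather than some superficially different phrasing. Both identifications are standard and are exactly the point of the preceding remark, so the only real work is to state them cleanly and cite Theorem \ref{general01}; there are no estimates or constructions to carry out beyond what that theorem already supplies.
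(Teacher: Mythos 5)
Your proposal is correct and is exactly the paper's intended argument: the proposition is stated as an immediate application of Theorem \ref{general01}(1), taking $\mathcal{K}$ to be the family of compact subsets of $X$, so that $\sigma$-starcompactness is precisely star-$\sigma\mathcal{K}$ and star-$K$-Menger is precisely $SS^*_\mathcal{K}(\mathcal{O},\mathcal{O})$, with closure of $\mathcal{K}$ under finite unions supplying the remaining hypothesis. Both definitional identifications you flag (from \cite{YKS01} and \cite{YKS07}) are the standard ones, so no gap remains.
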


\begin{proposition}
Let $X$ be a $\mathcal{L}$-starcompact space of size less than $\mathfrak{d}$. Then $X$ is star-$\mathcal{L}$-Menger.
\end{proposition}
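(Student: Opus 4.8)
The plan is to recognize this proposition as a direct instance of Theorem \ref{general01}(1), obtained by taking $\mathcal{K}$ to be the family $\mathcal{L}$ of all Lindel\"of subspaces of $X$. First I would recall that, in the terminology of the sources cited above, $\mathcal{L}$-starcompactness means that for every open cover $\mathcal{U}$ of $X$ there is a Lindel\"of subspace $L \subseteq X$ with $St(L,\mathcal{U}) = X$, while star-$\mathcal{L}$-Menger is precisely the principle $SS^*_\mathcal{L}(\mathcal{O},\mathcal{O})$: for every sequence $\{\mathcal{U}_n : n \in \omega\}$ of open covers there is a sequence $\{L_n : n \in \omega\}$ of Lindel\"of subspaces such that $\{St(L_n,\mathcal{U}_n) : n \in \omega\}$ covers $X$. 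Thus the statement to be proved is exactly the conclusion of Theorem \ref{general01}(1) for $\mathcal{K} = \mathcal{L}$, once the hypotheses are checked.

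The two hypotheses needed are that $\mathcal{L}$ is closed under finite unions and that $X$ is star-$\sigma\mathcal{L}$. The first is immediate, since a finite (indeed countable) union of Lindel\"of subspaces is Lindel\"of. For the second, I would verify that $\mathcal{L}$-starcompactness coincides with star-$\sigma\mathcal{L}$. The forward implication is trivial: a single Lindel\"of kernel $L$ with $St(L,\mathcal{U}) = X$ is, a fortiori, a $\sigma\mathcal{L}$ kernel. The reverse implication is where the one genuine observation enters: a $\sigma\mathcal{L}$ kernel is by definition a countable union $K = \bigcup_{n\in\omega} L_n$ of Lindel\"of subspaces, and since countable unions of Lindel\"of subspaces are again Lindel\"of, $K$ itself is a Lindel\"of subspace witnessing $\mathcal{L}$-starcompactness. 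Hence the two properties are equivalent.

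With both hypotheses in hand, the conclusion is immediate: since $|X| < \mathfrak{d}$, the family $\mathcal{L}$ is closed under finite unions, and $X$ is star-$\sigma\mathcal{L}$, Theorem \ref{general01}(1) yields that $X$ satisfies $SS^*_\mathcal{L}(\mathcal{O},\mathcal{O})$, i.e.\ $X$ is star-$\mathcal{L}$-Menger. I expect no serious obstacle: the entire argument is parallel to the preceding proposition relating $\sigma$-starcompactness to star-$K$-Menger, and the only point requiring care is the identification of the named classical properties with the $\sigma\mathcal{K}$/$SS^*_\mathcal{K}$ framework, together with the elementary fact that the Lindel\"of property is preserved by countable unions.
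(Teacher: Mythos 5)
Your proposal is correct and is exactly the argument the paper intends: these propositions are stated as immediate applications of Theorem \ref{general01}(1), and you instantiate it with $\mathcal{K}=\mathcal{L}$, verify closure under finite unions, and identify $\mathcal{L}$-starcompactness with star-$\sigma\mathcal{L}$ via the fact that countable unions of Lindel\"of subspaces are Lindel\"of. The paper's own remark (``Since the properties of the kernels are closed under finite unions\ldots'') is precisely this observation, so your route matches the paper's.
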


\begin{proposition}
Let $X$ be a $K$-starcompact space of size less than $\mathfrak{d}$. Then $X$ is star-$K$-Menger.
\end{proposition}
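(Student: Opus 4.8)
The plan is to recognize this statement as a direct instance of Theorem \ref{general01}(1), once the right family $\mathcal{K}$ is fixed. Specifically, I would let $\mathcal{K}$ denote the collection of all compact subsets of $X$. Two bookkeeping identifications then drive everything: that $K$-starcompactness of $X$ says exactly that $X$ is star-$\sigma\mathcal{K}$ for this $\mathcal{K}$, and that the star-$K$-Menger property is precisely $SS^*_\mathcal{K}(\mathcal{O},\mathcal{O})$ for this same $\mathcal{K}$ (compact kernels $K_n$, one per cover $\mathcal{U}_n$, with $\{St(K_n,\mathcal{U}_n):n\in\omega\}$ an open cover of $X$).

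First I would verify that $\mathcal{K}$ is closed under finite unions, which is immediate since a finite union of compact subspaces is compact; this meets the corresponding hypothesis of Theorem \ref{general01}. Next I would check that $K$-starcompactness implies that $X$ is star-$\sigma\mathcal{K}$: given any open cover $\mathcal{U}$ of $X$, $K$-starcompactness furnishes a compact set $K \subseteq X$ with $St(K,\mathcal{U}) = X$, and since a single compact set is trivially a countable union of members of $\mathcal{K}$ (for instance $K \cup \emptyset \cup \emptyset \cup \cdots$), it is a $\sigma\mathcal{K}$ kernel of $X$ with respect to $\mathcal{U}$. Hence $X$ is star-$\sigma\mathcal{K}$. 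With these identifications in place I would simply invoke Theorem \ref{general01}(1): since $|X| < \mathfrak{d}$, $X$ is star-$\sigma\mathcal{K}$, and $\mathcal{K}$ is closed under finite unions, it follows that $X$ satisfies $SS^*_\mathcal{K}(\mathcal{O},\mathcal{O})$, i.e.\ $X$ is star-$K$-Menger.

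I do not expect any genuine obstacle, as the whole content sits in Theorem \ref{general01}; the argument is literally the one indicated by the preceding remark, whose point is that the kernel property in question (compactness) is closed under finite unions. The same template proves the two companion propositions verbatim, taking $\mathcal{K}$ to be the $\sigma$-compact class or the Lindel\"of class respectively, noting that a $\sigma$-compact kernel is automatically a countable union of compact sets and that Lindel\"ofness is likewise preserved under finite unions. The only subtlety worth flagging is to confirm that the intended meaning of ``star-$K$-Menger'' in the source is the $SS^*_\mathcal{K}$-style selection described above rather than a weaker $S^*_{fin}$-style variant, so that the conclusion of Theorem \ref{general01}(1) matches the target property on the nose.
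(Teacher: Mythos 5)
Your proposal is correct and is exactly the paper's intended argument: these propositions are stated as immediate applications of Theorem \ref{general01}(1), taking $\mathcal{K}$ to be the family of compact subsets, noting closure under finite unions, and observing that the $K$-starcompact hypothesis gives a (trivially $\sigma$-)$\mathcal{K}$ kernel for each cover. Your flagged subtlety is also resolved as you hoped: star-$K$-Menger in the cited source is indeed the $SS^*_\mathcal{K}(\mathcal{O},\mathcal{O})$-style selection with compact sets $K_n$, so the conclusion of the theorem matches the target property exactly.
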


\section{The extent of absolutely star selection principles}

In the first part of this section we give a bound for the extent of absolutely star selection principles on separable spaces and, in the second part of the section, we provide some results regarding to selectively $(a)$-type properties on small spaces with countable extent. \\

\noindent Sakai showed in \cite{SM} that if $X$ is a strongly star-Menger space and $Y$ is a closed discrete subset of $X$, then $|Y| < cof(Fin(Y)^\mathbb{N})$ (in particular, $|Y| < \mathfrak{c}$). In addition, Caserta, Di Maio and Ko\v{c}inac pointed out in \cite{CDK} that if $X$ is a separable selectively-$(a)$ space and $Y \subseteq X$ is closed and discrete, then $|Y| < \mathfrak{c}$ (see \cite{dSSG2014} for the general case). Since, absolutely strongly star-Menger spaces are selectively-$(a)$, by this result we get that if $X$ is a separable absolutely strongly star-Menger space, then all its closed and discrete subsets are of size less than the continuum. Corollary \ref{SeparableaSSMsmallextent}, shows that in fact, such subsets are of size less than the dominating number $\mathfrak{d}$.\\

We will use the following definition in the proof of Theorem \ref{SeparableaNSMsmallextent} but will look in detail at it on the last section.

\begin{definition}
We say that a space $X$ is absolutely neighbourhood star-Menger if for each sequence $\{\mathcal{U}_n:n\in\omega\}$ of open covers and each dense subset $D$ of $X$, there exists a sequence $\{F_n:n\in\omega\}$ of finite subsets of $D$ such that for any open sets $O_n$ with $F_n\subseteq O_n$, $n\in\omega$, $\{St(O_n,\mathcal{U}_n):n\in\omega\}$ is an open cover of $X$.
\end{definition}

\begin{theorem}
\label{SeparableaNSMsmallextent}
Let $X$ be a separable absolutely neighbourhood star-Menger space. If $Y$ is a closed and discrete subset of $X$, then $|Y| < \mathfrak{d}$.
\end{theorem}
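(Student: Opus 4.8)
The plan is to exploit separability to extract from each dense set a single fixed countable dense subset, and then to run a dominating-number argument on a closed discrete set $Y$ of size $\mathfrak{d}$ to construct an open cover together with carefully chosen open neighbourhoods of the selected finite sets that miss a point, contradicting the absolutely neighbourhood star-Menger property. Suppose toward a contradiction that $|Y| \geq \mathfrak{d}$; fix an enumeration $\{y_\alpha : \alpha < \mathfrak{d}\}$ of a subset of $Y$ of size exactly $\mathfrak{d}$, and let $D = \{d_q : q \in \omega\}$ be a countable dense subset of $X$ (which exists by separability). The key point is that $D$ is dense, so it is eligible as the dense set in the definition of absolutely neighbourhood star-Menger, and moreover every open set meeting a neighbourhood of a point of $D$ can be analyzed through the finitely many $d_q$ it contains.

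First I would set up the sequence of open covers. Since $Y$ is closed and discrete, for each $y \in Y$ choose an open set $W_y$ with $W_y \cap Y = \{y\}$. For each $n \in \omega$ I would build an open cover $\mathcal{U}_n$ of $X$ that separates the points of $D$ from the points of $y_\alpha$ in a way indexed by $n$: roughly, $\mathcal{U}_n$ should consist of the $W_{y}$ together with small open sets around each $d_q$, arranged so that $St(d_q, \mathcal{U}_n)$ only captures those $y_\alpha$ that are ``$n$-close'' to $d_q$ in some prescribed sense. The crucial device is to encode, for each $\alpha < \mathfrak{d}$, a function $f_\alpha \in \omega^\omega$ recording for each $n$ how large an index $q$ must be before $y_\alpha$ enters $St(d_q,\mathcal{U}_n)$; this is the same bookkeeping used in the proof of Theorem \ref{general01}, now run in reverse.

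Next I would apply the hypothesis: given these $\{\mathcal{U}_n : n\in\omega\}$ and the dense set $D$, absolutely neighbourhood star-Menger yields finite sets $F_n \subseteq D$ such that for \emph{every} choice of open $O_n \supseteq F_n$ the family $\{St(O_n,\mathcal{U}_n) : n\in\omega\}$ covers $X$. The finite sets $F_n$ determine a single function $g \in \omega^\omega$ (for instance $g(n) = \max\{q : d_q \in F_n\}$), and since $|Y| = \mathfrak{d}$ one can arrange the construction so that $\{f_\alpha : \alpha < \mathfrak{d}\}$ is a dominating family, whence no single $g$ eventually dominates all of them — in fact one can find $\alpha$ with $f_\alpha(n) > g(n)$ for all $n$. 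For this particular $y_\alpha$, the freedom in choosing the open sets $O_n$ (which is exactly what the \emph{neighbourhood} version grants over the plain absolute version) lets me pick each $O_n$ so small around $F_n$ that $St(O_n,\mathcal{U}_n)$ still omits $y_\alpha$, contradicting the covering conclusion.

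The main obstacle I anticipate is the geometric design of the covers $\mathcal{U}_n$ so that two competing constraints hold simultaneously: the stars $St(d_q,\mathcal{U}_n)$ must be small enough that the threshold functions $f_\alpha$ genuinely record a dominating family, yet the open sets around the $d_q$ must leave enough room that, after replacing $F_n$ by a slightly larger open $O_n$, the star of $O_n$ does not suddenly swallow the escaping point $y_\alpha$. Balancing these is where the regularity of $X$ (assumed throughout the paper) should be used, to shrink neighbourhoods and separate $y_\alpha$ from the closure of the relevant $O_n$. I expect the cleanest route is to build $\mathcal{U}_n$ from a fixed neighbourhood assignment on $D$ refined by $n$, so that the map $\alpha \mapsto f_\alpha$ is manifestly onto a dominating family and the neighbourhood freedom translates directly into the inequality $f_\alpha(n) > g(n)$ producing the omitted point.
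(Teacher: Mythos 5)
Your overall strategy is the same as the paper's (argue by contradiction, use a countable dense set, a dominating family indexed by points of $Y$, covers in which each chosen point of $Y$ lies in a unique element, and the neighbourhood freedom to pick $O_n$ dodging that element), but the decisive step --- the explicit construction of the covers $\mathcal{U}_n$ --- is exactly what you defer as ``the main obstacle'', and your sketch of it does not work as written. You propose that $\mathcal{U}_n$ consist of the sets $W_y$ together with small open sets around each $d_q$. Putting neighbourhoods of the dense points into the cover is self-defeating: the escape argument requires every element of $\mathcal{U}_n$ containing the escaping point to be disjoint from $O_n$, and since a fixed neighbourhood of a non-isolated $y_\alpha$ must meet the dense set $D$, any $n$-independent choice of the element at $y_\alpha$ forces the threshold functions to be bounded, killing the domination argument (also, the $W_y$ together with sets around the $d_q$ need not even cover $X$). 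The paper's construction avoids all of this: fix \emph{first} a family $\{f_\alpha:\alpha<\mathfrak{d}\}$ that dominates in the everywhere sense, choose distinct $p_\alpha\in Y$, and for each $\alpha$ and $n$ choose (Hausdorff separation of finitely many points suffices; regularity is a red herring) an open $O_n(p_\alpha)\ni p_\alpha$ with $O_n(p_\alpha)\cap Y=\{p_\alpha\}$ and opens $V_n^{i\,\alpha}\ni e_i$ for $i\le f_\alpha(n)$ with $O_n(p_\alpha)\cap\bigcup_{i\le f_\alpha(n)}V_n^{i\,\alpha}=\emptyset$; the cover is $\mathcal{U}_n=\{O_n(p_\alpha):\alpha<\mathfrak{d}\}\cup\{X\setminus P\}$, which contains no neighbourhood of any dense point, so the unique element containing $p_\alpha$ is $O_n(p_\alpha)$, and it shrinks with $n$ so as to dodge the first $f_\alpha(n)$ dense points. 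Given finite $F_n\subseteq E$, set $g(n)=\min\{m:F_n\subseteq\{e_0,\dots,e_m\}\}$, pick $\alpha$ with $g\le f_\alpha$, and take $O_n=\bigcup_{i\le f_\alpha(n)}V_n^{i\,\alpha}\supseteq F_n$; then $p_\alpha\notin St(O_n,\mathcal{U}_n)$ for every $n$. Note the quantifier order: the dominating family comes before the covers, which are built from it. Your reversed formulation --- build covers, read off the threshold functions, then ``arrange'' that they form a dominating family --- is circular as stated; the only way to realize it is precisely the construction above.

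There is a second gap: you take an arbitrary countable dense $D$ and arbitrary points $y_\alpha\in Y$, but the scheme is impossible when some $y_\alpha$ lies in $D$, and this can genuinely happen: any isolated point of $X$ belonging to $Y$ lies in \emph{every} dense subset of $X$, and such a point can lie in $F_n\subseteq O_n$, hence never escapes $St(O_n,\mathcal{U}_n)$; no cover can separate it from itself. The paper disposes of this at the outset: the isolated points of a separable space form a countable set contained in every dense set, so one may assume $X$ has no isolated points, and then, $Y$ being closed discrete, $X\setminus Y$ is open and dense, so the countable dense set $E$ can be chosen inside $X\setminus Y$. (Alternatively, one can keep your $D$ and choose the $\mathfrak{d}$-many points $p_\alpha$ in $Y\setminus D$, which is possible since $\mathfrak{d}>\omega$.) Some such step must appear before the construction can even begin.
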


\begin{proof}
Assume $X$ is a separable absolutely neighbourhood star-Menger space. Then the set of isolated points is countable (otherwise it cannot be separable)  and it has to be a subset of every dense set. Hence, without loss of generality, we can assume $X$ has no isolated points. Now, if $Y$ is a closed and discrete subset of $X$, given that $X$ is separable and it has no isolated points, there is $E =\{e_n:n\in\omega\} \in [X \smallsetminus Y]^{\omega}$ so that $cl_X(E)= X$.

\noindent Now, let us assume that $|Y| \ge \mathfrak{d}$. Let $\{f_\alpha:\alpha < \mathfrak{d}\} \subseteq \omega ^\omega$ be a dominating family so that for each $g \in \omega ^\omega$ there is $\alpha < \mathfrak{d}$ such that $g \le f_\alpha$. For each $\alpha< \mathfrak{d}$, choose distinct points $p_{\alpha} \in Y$ and let $P = \{p_{\alpha}:\alpha < \mathfrak{d}\}$. For each $\alpha < \mathfrak{d}$, each $n \in \omega$ and each $i \le f_\alpha(n)$, define $O_n(p_{\alpha})$ and $V_n^{i \, \alpha}$ open sets so that 
\begin{enumerate}[nosep]
    \item $O_n(p_{\alpha})$ is an open neighbourhood of $p_{\alpha}$, 
    \item $O_n(p_{\alpha}) \cap Y = \{p_{\alpha}\}$,
    \item $V_n^{i \, \alpha}$ is an open neighbourhood of $e_i$, and
    \item $O_n(p_{\alpha}) \cap \bigcup _{i \le f_\alpha(n)} V_n^{i \, \alpha} = \emptyset$.
\end{enumerate}

\noindent For each $n\in \omega$ define $\mathcal{U}_n = \{O_n(p_{\alpha}): \alpha < \mathfrak{d}\} \cup \{X \smallsetminus P\}$. Observe that for each $n\in \omega$, $\mathcal{U}_n$ is an open cover of $X$.\\
We will show that the sequence $\{\mathcal{U}_n:n\in\omega\}$ and the dense set $E$, witness $X$ is not absolutely neighbourhood star-Menger.
Let $\{F_n:n\in\omega\}$ be any sequence of finite subsets of $E$. For each $n<\omega$, let $g(n) = min\{m:F_n \subseteq \{e_0, e_1, \ldots, e_m\}\}$. Thus, there is $\alpha < \mathfrak{d}$ such that for each $n\in \omega$, $g(n) \leq f_\alpha(n)$. 
\noindent If for each $n\in \omega$ we let $W_n = \bigcup_{i \le f_\alpha(n)}V_n^{i \, \alpha}$, then the sequence $(W_n:n\in \omega)$ satisfies that for each $n$, $F_n \subseteq W_n$. It only remains to show that $p_{\alpha} \notin \bigcup \{St(W_n,\mathcal{U}_n):n\in \omega\}$. Suppose the opposite, then there is $n \in \omega$ such that $p_{\alpha} \in St(W_n,\mathcal{U}_n)$. Since $O_n(p_{\alpha})$ is the only element of $\mathcal{U}_n$ that contains $p_{\alpha}$, then $O_n(p_{\alpha}) \cap W_n \ne \emptyset$. Then there is $i  \le f_\alpha(n)$ such that $ O_n(p_{\alpha}) \cap V_n^i \neq \emptyset$, which is a contradiction. Hence, $X$ is not absolutely neighbourhood star-Menger.
\end{proof}

Since every absolutely strongly star-Menger space is absolutely neighbourhood star-Menger, the following holds.

\begin{corollary}
\label{SeparableaSSMsmallextent}
Let $X$ be a separable absolutely strongly star-Menger space. If $Y$ is a closed and discrete subset of $X$, then $|Y| < \mathfrak{d}$.
\end{corollary}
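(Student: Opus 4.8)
The statement to prove is Corollary \ref{SeparableaSSMsmallextent}: a separable absolutely strongly star-Menger space has all closed discrete subsets of size less than $\mathfrak{d}$.

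The plan is to derive this directly from Theorem \ref{SeparableaNSMsmallextent} by showing that the weaker-looking property (absolutely neighbourhood star-Menger) is implied by absolutely strongly star-Menger. This is the approach the authors signal in the sentence immediately preceding the corollary, and it reduces the entire problem to a single implication. First I would recall the two definitions side by side: in the absolutely strongly star-Menger property one is handed a sequence of open covers $\{\mathcal{U}_n:n\in\omega\}$ and a dense $D$, and one extracts finite $F_n\subseteq D$ so that $\{St(F_n,\mathcal{U}_n):n\in\omega\}$ covers $X$; in the absolutely neighbourhood version one must additionally guarantee that the cover persists when each $F_n$ is fattened to an arbitrary open superset $O_n\supseteq F_n$.

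The key observation is monotonicity of the star operator: if $F_n\subseteq O_n$, then $St(F_n,\mathcal{U}_n)\subseteq St(O_n,\mathcal{U}_n)$, because every $U\in\mathcal{U}_n$ meeting $F_n$ also meets $O_n$. Hence from $X=\bigcup_{n\in\omega}St(F_n,\mathcal{U}_n)$ it follows that $X=\bigcup_{n\in\omega}St(O_n,\mathcal{U}_n)$ for any choice of open sets $O_n\supseteq F_n$. This is exactly the clause required by the neighbourhood definition, so the same witnessing finite sets $F_n\subseteq D$ that establish absolutely strongly star-Menger also establish absolutely neighbourhood star-Menger. Therefore every absolutely strongly star-Menger space is absolutely neighbourhood star-Menger.

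With that implication in hand, the corollary is immediate: if $X$ is separable and absolutely strongly star-Menger, then $X$ is separable and absolutely neighbourhood star-Menger, so Theorem \ref{SeparableaNSMsmallextent} applies verbatim and yields $|Y|<\mathfrak{d}$ for every closed discrete $Y\subseteq X$. There is essentially no obstacle here; the only point requiring a moment's care is verifying the star-monotonicity fact, and even that is routine once one writes $St(A,\mathcal{U})=\bigcup\{U\in\mathcal{U}:U\cap A\neq\emptyset\}$ and notes that $U\cap F_n\neq\emptyset$ forces $U\cap O_n\neq\emptyset$. The genuine mathematical content of the result lives entirely in Theorem \ref{SeparableaNSMsmallextent}, whose diagonalization against a dominating family does the real work; the corollary is a clean specialization obtained by dropping to the stronger hypothesis.
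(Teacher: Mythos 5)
Your proposal is correct and follows exactly the paper's own route: the authors justify the corollary with the single remark that every absolutely strongly star-Menger space is absolutely neighbourhood star-Menger, and then invoke Theorem \ref{SeparableaNSMsmallextent}. Your monotonicity argument ($F_n\subseteq O_n$ implies $St(F_n,\mathcal{U}_n)\subseteq St(O_n,\mathcal{U}_n)$, so the same witnesses work) is precisely the implicit content of that remark, spelled out correctly.
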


\noindent The proof of Theorem \ref{SeparableaNSMsmallextent} is a modification of the proof of Proposition 2 in \cite{BM}, where Bonanzinga and Matveev show that if an almost disjoint family $\mathcal{A}$ is so that the Mr\'owka-Isbell space $\Psi(A)$ is strongly star-Menger, then $|\mathcal{A}| < \mathfrak{d}$. \\

\noindent {\bf Question:} Are aSSM and aNSM equivalent?
If not, can we find a normal (Tychonoff) counterexample?\\

Defining similarly the absolutely neighbourhood star-Hurewicz property (see last section), the Hurewicz case of Theorem \ref{SeparableaNSMsmallextent} can also be proved:

\begin{theorem}
\label{SeparableaNSHsmallextent}
Let $X$ be a separable absolutely neighbourhood star-Hurewicz space. If $Y$ is a closed and discrete subset of $X$, then $|Y| < \mathfrak{b}$.
\end{theorem}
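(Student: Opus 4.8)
The plan is to mimic the proof of Theorem \ref{SeparableaNSMsmallextent} essentially verbatim, replacing the dominating family by a witness for the bounding number $\mathfrak{b}$ and replacing the open-cover conclusion by the stronger $\gamma$-cover (Hurewicz) conclusion. First I would dispose of isolated points exactly as before: since $X$ is separable and its isolated points form a countable subset of every dense set, I may assume $X$ has no isolated points, so that for a closed discrete $Y$ there is a countable $E=\{e_n:n\in\omega\}\subseteq X\smallsetminus Y$ with $cl_X(E)=X$. The target is to show $|Y|<\mathfrak{b}$, so I argue by contradiction: assume $|Y|\ge\mathfrak{b}$.

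The key adaptation is which family I extract. Since $\mathfrak{b}$ is the least size of an unbounded family, I take $\{f_\alpha:\alpha<\mathfrak{b}\}\subseteq\omega^\omega$ to be an \emph{unbounded} family; that is, for every $g\in\omega^\omega$ there is some $\alpha<\mathfrak{b}$ with $g\not\le^* f_\alpha$, equivalently $g(n)\le f_\alpha(n)$ for infinitely many $n$. For each $\alpha<\mathfrak{b}$ pick a distinct $p_\alpha\in Y$, set $P=\{p_\alpha:\alpha<\mathfrak{b}\}$, and for each $\alpha$, each $n$, and each $i\le f_\alpha(n)$ choose open sets $O_n(p_\alpha)$ and $V_n^{i\,\alpha}$ satisfying the same four separation conditions as in the Menger proof (this uses regularity/$T_1$ to separate the single point $p_\alpha\in Y$ from the finitely many $e_i$'s). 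Then define the same covers $\mathcal{U}_n=\{O_n(p_\alpha):\alpha<\mathfrak{b}\}\cup\{X\smallsetminus P\}$.

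The conclusion step is where the Hurewicz flavour enters. Given any sequence $\{F_n:n\in\omega\}$ of finite subsets of $E$, let $g(n)=\min\{m:F_n\subseteq\{e_0,\dots,e_m\}\}$. By unboundedness there is $\alpha<\mathfrak{b}$ with $g(n)\le f_\alpha(n)$ for \emph{infinitely many} $n$. Setting $W_n=\bigcup_{i\le f_\alpha(n)}V_n^{i\,\alpha}$, I get $F_n\subseteq W_n$ for all those infinitely many $n$. Now for each such $n$ the same argument as before shows $p_\alpha\notin St(W_n,\mathcal{U}_n)$: since $O_n(p_\alpha)$ is the unique member of $\mathcal{U}_n$ containing $p_\alpha$ and condition (4) forces $O_n(p_\alpha)\cap W_n=\emptyset$. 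Hence $p_\alpha\notin St(W_n,\mathcal{U}_n)$ for infinitely many $n$, so $\{St(W_n,\mathcal{U}_n):n\in\omega\}$ fails to be a $\gamma$-cover, witnessing that $X$ is not absolutely neighbourhood star-Hurewicz — a contradiction.

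The main obstacle is conceptual rather than computational: one must be careful that for the Hurewicz ($\gamma$-cover) conclusion it suffices to defeat the selection at \emph{infinitely many} coordinates $n$, which is precisely the negation of "all but finitely many," and this matches exactly the "infinitely often" guarantee that unboundedness with respect to $\le^*$ provides (whereas the Menger case needed a single $n$ and could afford the stronger domination giving $\mathfrak{d}$). Everything else — the separation construction of $O_n(p_\alpha)$ and $V_n^{i\,\alpha}$, and the verification that $O_n(p_\alpha)$ is the only cover element through $p_\alpha$ — transfers without change from the proof of Theorem \ref{SeparableaNSMsmallextent}.
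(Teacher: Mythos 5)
Your proposal is correct and follows exactly the route the paper intends: the paper does not write out a proof of Theorem \ref{SeparableaNSHsmallextent}, saying only that it is the Hurewicz analogue of Theorem \ref{SeparableaNSMsmallextent}, and your modification (an unbounded family in place of a dominating one, with defeat at infinitely many coordinates sufficing to destroy the $\gamma$-cover conclusion) is precisely that adaptation. Two small points should be tightened. First, your formulation of unboundedness is miswritten: the condition is that for every $g\in\omega^\omega$ there is $\alpha<\mathfrak{b}$ with $f_\alpha\not\le^* g$, equivalently $g(n)<f_\alpha(n)$ for infinitely many $n$; the relation $g\not\le^* f_\alpha$ that you wrote cannot hold for all $g$ (consider $g\equiv 0$), although the ``infinitely often'' property you actually invoke later is the correct consequence and is what the bound $\mathfrak{b}$ buys you. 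Second, your sets $W_n$ satisfy $F_n\subseteq W_n$ only at the infinitely many coordinates where $g(n)\le f_\alpha(n)$, whereas negating the absolutely neighbourhood star-Hurewicz property requires exhibiting open sets $O_n\supseteq F_n$ for \emph{every} $n$; this is repaired in one line by taking $O_n=W_n$ at those good coordinates and $O_n=X$ (or any open set containing $F_n$) at the remaining ones, after which $p_\alpha$ still misses $St(O_n,\mathcal{U}_n)$ at infinitely many $n$, so $\{St(O_n,\mathcal{U}_n):n\in\omega\}$ is not a $\gamma$-cover and the contradiction stands.
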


Since every absolutely strongly star-Hurewicz space is absolutely neighbourhood star-Hurewicz, the following holds.

\begin{corollary}
\label{SeparableaSSHsmallextent}
Let $X$ be a separable absolutely strongly star-Hurewicz space. If $Y$ is a closed and discrete subset of $X$, then $|Y| < \mathfrak{b}$.
\end{corollary}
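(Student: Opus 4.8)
The plan is to obtain Corollary \ref{SeparableaSSHsmallextent} as an immediate consequence of Theorem \ref{SeparableaNSHsmallextent}, exactly as indicated by the sentence preceding the statement. The only thing that genuinely needs to be checked is the implication that every absolutely strongly star-Hurewicz space is absolutely neighbourhood star-Hurewicz; once this is in hand, the bound $|Y| < \mathfrak{b}$ for a closed discrete $Y$ in a separable space follows directly by applying Theorem \ref{SeparableaNSHsmallextent}.

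To verify the implication I would argue as follows. Suppose $X$ is absolutely strongly star-Hurewicz, let $\{\mathcal{U}_n : n \in \omega\}$ be a sequence of open covers of $X$, and let $D$ be a dense subset of $X$. By the aSSH property there is a sequence $\{F_n : n \in \omega\}$ of finite subsets of $D$ such that $\{St(F_n,\mathcal{U}_n) : n \in \omega\}$ is a $\gamma$-cover of $X$; that is, every $x \in X$ lies in $St(F_n,\mathcal{U}_n)$ for all but finitely many $n$. I claim that the same sequence $\{F_n : n \in \omega\}$ witnesses the absolutely neighbourhood star-Hurewicz property for these data. Indeed, let $O_n$ be arbitrary open sets with $F_n \subseteq O_n$ for each $n$. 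Since $U \cap F_n \neq \emptyset$ implies $U \cap O_n \neq \emptyset$ for every $U \in \mathcal{U}_n$, the star operator is monotone in its first argument, whence $St(F_n,\mathcal{U}_n) \subseteq St(O_n,\mathcal{U}_n)$ for all $n$. Consequently, if $x$ belongs to $St(F_n,\mathcal{U}_n)$ for all but finitely many $n$, then $x$ also belongs to $St(O_n,\mathcal{U}_n)$ for all but finitely many $n$. Hence $\{St(O_n,\mathcal{U}_n) : n \in \omega\}$ is again a $\gamma$-cover of $X$, and $X$ is absolutely neighbourhood star-Hurewicz.

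Having established the implication, I would simply invoke Theorem \ref{SeparableaNSHsmallextent}: since a separable absolutely strongly star-Hurewicz space $X$ is in particular separable and absolutely neighbourhood star-Hurewicz, every closed discrete subset $Y$ of $X$ satisfies $|Y| < \mathfrak{b}$. I do not anticipate a real obstacle here; the entire content is the monotonicity of the star operator together with the elementary observation that enlarging each member of a $\gamma$-cover preserves the $\gamma$-cover property. If one preferred a self-contained argument avoiding the neighbourhood formulation, one could instead transcribe the proof of Theorem \ref{SeparableaNSMsmallextent}, replacing the dominating family indexed by $\mathfrak{d}$ with an unbounded family of size $\mathfrak{b}$ and the open-cover conclusion with the $\gamma$-cover conclusion, so that the chosen point $p_\alpha \in Y$ escapes all but finitely many stars and thereby contradicts the $\gamma$-cover requirement; but the implication-based route is shorter and is the one suggested by the surrounding text.
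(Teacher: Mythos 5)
Your proposal is correct and follows exactly the paper's route: the paper derives Corollary \ref{SeparableaSSHsmallextent} from Theorem \ref{SeparableaNSHsmallextent} via the observation that every absolutely strongly star-Hurewicz space is absolutely neighbourhood star-Hurewicz, which you verify by the monotonicity of the star operator (the paper leaves this implication implicit). Nothing further is needed.
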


Similarly, we are interested in the question whether the properties
 $aSSH$ and $aNSH$ are equivalent.\\

The following theorem show the interplay between some selective versions of property $(a)$ and the selective and absolute versions of the star selection principles on small spaces with countable extent.

\begin{theorem}\label{diagramextent}
The following diagram holds for any space $X$. The dashed arrows hold if $X$ has countable extent and size less than the respective small cardinal invariant.
\end{theorem}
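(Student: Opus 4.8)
The plan is to assemble the diagram purely from implications already proved or immediate in the excerpt, so that the theorem becomes a bookkeeping statement rather than a fresh argument. First I would fix the objects living in the diagram. The solid (unconditional) part should collect: the vertical implications $selSS^*_{\mathcal{K}}(\mathcal{O},\mathcal{B})\to aSS^*_{\mathcal{K}}(\mathcal{O},\mathcal{B})\to$ aStar-$\sigma\mathcal{K}\to$ Star-$\sigma\mathcal{K}$ from Observation \ref{relationship} (applied with $\mathcal{K}=cd$ and $\mathcal{K}=fin$, and with $\mathcal{B}=\mathcal{O}$); the horizontal ``Hurewicz $\Rightarrow$ Menger'' implications coming from $\Gamma\subseteq\mathcal{O}$; the standard implications among $selSSM$, $selSS^*_{cd}(\mathcal{O},\mathcal{O})$, $aSS^*_{cd}(\mathcal{O},\mathcal{O})$ and aStar-$\sigma$-$cd$ recorded in Observations \ref{T1SelSSMimpliesStronglysel(a)} and \ref{selaimpliesabsstarCD}; and the trivial fact that strongly selectively $(a)$, being $selSS^*_{cd}(\mathcal{O},\mathcal{O})$, implies selectively $(a) = aSS^*_{cd}(\mathcal{O},\mathcal{O})$, again via Observation \ref{relationship}.

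Next I would handle the dashed (conditional) arrows, which are exactly the content hidden behind ``countable extent and size less than the respective small cardinal''. The key point is that for a $T_1$ space of countable extent, every closed discrete subset is countable, so a closed discrete set is automatically a countable union of finite (indeed one-point) sets; this makes the kernels supplied by the Star-$\sigma$-$cd$ / aStar-$\sigma$-$cd$ properties into genuine $\sigma$-$fin$ kernels. Consequently, under countable extent, aStar-$\sigma$-$cd$ upgrades to aStar-$\sigma$-$fin$ $(=aSSL)$ and Star-$\sigma$-$cd$ upgrades to Star-$\sigma$-$fin$ $(=SSL)$. I would state this as a short lemma and then feed it into Theorem \ref{general01} and Theorem \ref{generalHurewicz}: once the space is $aSSL$ (resp. $SSL$) and has size $<\mathfrak{d}$ (resp. $<\mathfrak{b}$, $<cov(\mathcal{M})$ via Proposition \ref{aSSLSMALLcovMimpliesSelSSM}), we obtain the $selSS^*_{fin}$ / $SS^*_{fin}$ conclusions, i.e. the dashed arrows landing on $selSSM$, $selSSH$, $SSM$, $SSH$, and the Rothberger analogues. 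Each dashed arrow is thus an instance of an already-proved theorem with its hypothesis checked via the countable-extent reduction.

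Concretely, I would write the proof as: ``Each solid arrow is one of Observations \ref{relationship}, \ref{T1SelSSMimpliesStronglysel(a)}, \ref{selaimpliesabsstarCD}, or follows from $\Gamma\subseteq\mathcal{O}$. For the dashed arrows, assume $X$ has countable extent. Then every closed discrete subset of $X$ is countable, so any $\sigma$-$cd$ kernel is a $\sigma$-$fin$ kernel; hence aStar-$\sigma$-$cd$ implies $aSSL$ and Star-$\sigma$-$cd$ implies $SSL$. Now invoke Theorem \ref{general01}, Theorem \ref{generalHurewicz}, and Proposition \ref{aSSLSMALLcovMimpliesSelSSM} with the cardinality hypotheses $|X|<\mathfrak{d}$, $|X|<\mathfrak{b}$, $|X|<cov(\mathcal{M})$ respectively.'' The only genuine verification is the countable-extent reduction, and that is essentially the definition of extent.

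The step I expect to be the real obstacle is not logical but presentational: matching each arrow drawn in the (not-yet-shown) \texttt{tikzcd} diagram to the correct cited result and the correct cardinal label, and making sure the dashed arrows all carry hypotheses that the cited theorems actually deliver — in particular confirming that countable extent is precisely what converts $cd$-kernels into $fin$-kernels and that no arrow silently requires a hypothesis (such as $T_1$ or normality) that the theorem statement does not grant. I would therefore double-check, arrow by arrow, that Star-$\sigma$-$cd$/aStar-$\sigma$-$cd$ together with countable extent and the stated cardinal bound really do yield the target property through one of the four quoted results, and flag any arrow that instead needs the $T_1$ assumption already invoked in Observation \ref{T1SelSSMimpliesStronglysel(a)}.
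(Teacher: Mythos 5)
Your proposal is correct, but it proves the dashed arrows by a genuinely different route from the paper's. The paper never passes through $aSSL$: it proves four representative lemmas (Lemmas \ref{gbh}, \ref{gdm}, \ref{obm}, \ref{gcovr}) directly from the $cd$-selection hypotheses, using countable extent only to enumerate each closed discrete selection $C_n$ as $\{c^n_i:i\in\omega\}$, and then re-runs the scale combinatorics in each case (a $\le^*$-bound for $\mathfrak{b}$, a non-dominated function for $\mathfrak{d}$, a guessing function for $cov(\mathcal{M})$), declaring the four absolute-row arrows ``similar''. Your route is instead a one-step reduction: countable extent turns $\sigma$-$cd$ kernels into countable kernels, so by Observation \ref{relationship} every dashed-arrow source implies $aSSL$, after which Theorem \ref{general01}, Theorem \ref{generalHurewicz} and Proposition \ref{aSSLSMALLcovMimpliesSelSSM} finish; note that all eight dashed arrows start in the $selSS^*_{cd}$/$aSS^*_{cd}$ rows, so only the absolute half of your upgrade (aStar-$\sigma$-$cd$ $\Rightarrow$ $aSSL$) is actually needed. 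Besides being shorter and uniform, your route sharpens the diagram: for the arrow $selSS^*_{cd}(\mathcal{O},\mathcal{O})\rightarrow selSSM$ the paper's Lemma \ref{obm} requires $|X|<\mathfrak{b}$, because from an $(\mathcal{O},\mathcal{O})$-hypothesis the stars of the $C_n$ cover only collectively, forcing the proof to intersect an infinite set of good indices with a cofinite one; your detour through $aSSL$ regenerates kernels whose star is all of $X$ for \emph{every} $n$, so the non-domination argument applies and $|X|<\mathfrak{d}$ suffices, and likewise the absolute-row sources land in the stronger selective targets. What the paper's direct proofs buy is self-contained combinatorics; what yours buys is economy and slightly stronger conclusions. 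Finally, your two cautionary flags are exactly the right ones: the diagonal solid arrows into the $cd$-properties need finite sets to be closed discrete, which is the $T_1$ content of the paper's standing regularity convention (cf.\ Observation \ref{T1SelSSMimpliesStronglysel(a)}), and countable extent enters only through the countability of closed discrete subsets of $X$.
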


{\centering
\begin{tikzcd}[row sep=3em, column sep=2.6em]
 \scalebox{0.7}{SelSSR} \arrow[rrr] \arrow[dd] \arrow[dr] & & & \scalebox{0.7}{SelSSM} \arrow[rrr,<-] \arrow[dd] \arrow[dll] & & & \scalebox{0.7}{SelSSH} \arrow[dd] \arrow[dll] \\ 
 & \scalebox{0.7}{$selSS^*_{cd}(\mathcal{O},\mathcal{O})$} \arrow[rrr,<-,crossing over] \arrow[urr, crossing over, bend right=12, swap, "< \, \mathfrak{b}" near end, dashed]
 & & & \scalebox{0.7}{$selSS^*_{cd}(\mathcal{O},\Gamma)$} \arrow[ullll, crossing over, bend left=5, swap, "< \, cov(\mathcal{M})" near end, dashed] \arrow[ul, crossing over, "< \, \mathfrak{d}", swap, dashed] \arrow[urr, crossing over, bend right=12, swap, "< \, \mathfrak{b}" , dashed] & & \\ 
 \scalebox{0.7}{aSSR} \arrow[rrr] \arrow[dr] & & & \scalebox{0.7}{aSSM} \arrow[rrr,<-] \arrow[dll] & & & \scalebox{0.7}{aSSH} \arrow[dll] \\ 
 & \scalebox{0.7}{$aSS^*_{cd}(\mathcal{O},\mathcal{O})$} \arrow[uu,<-,crossing over] \arrow[rrr,<-,crossing over] \arrow[urr, crossing over, bend right=12, swap, "< \, \mathfrak{b}" near end, dashed] & & & \scalebox{0.7}{$aSS^*_{cd}(\mathcal{O},\Gamma)$} \arrow[uu,<-,crossing over] \arrow[ullll, crossing over, bend left=5, swap, "< \, cov(\mathcal{M})" near end, dashed] \arrow[ul, crossing over, "< \, \mathfrak{d}", swap, dashed] \arrow[urr, crossing over, bend right=12, swap, "< \, \mathfrak{b}" , dashed] & & \\ 
\end{tikzcd}\par
}

Observe the solid arrows follow from definitions. The ideas in the proofs of the eight dashed arrows are similar. So, we will only prove some of them in Lemmas \ref{gbh}, \ref{gdm}, \ref{obm} and \ref{gcovr}.  

\begin{lemma}\label{gbh}
If $X$ is $selSS^*_{cd}(\mathcal{O},\Gamma)$ with countable extent and $|X|<\mathfrak{b}$, then $X$ is $selSSH$.
\end{lemma}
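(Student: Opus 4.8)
The plan is to adapt the bounding argument used for the Hurewicz case in Theorem~\ref{generalHurewicz}, feeding it the closed discrete kernels supplied by the hypothesis $selSS^*_{cd}(\mathcal{O},\Gamma)$ and using countable extent to render those kernels countable so that initial segments make sense. Concretely, I would start with an arbitrary sequence $\{\mathcal{U}_n:n\in\omega\}$ of open covers and an arbitrary sequence $\{D_n:n\in\omega\}$ of dense subsets of $X$, since these are precisely the data a witness for $selSSH$ must handle. Applying $selSS^*_{cd}(\mathcal{O},\Gamma)$ to these two sequences yields closed discrete sets $K_n\subseteq D_n$ such that $\{St(K_n,\mathcal{U}_n):n\in\omega\}$ is a $\gamma$-cover of $X$. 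The role of the extent hypothesis is that each $K_n$, being closed and discrete in a space of countable extent, is countable; since a $\gamma$-cover forces all but finitely many of the $St(K_n,\mathcal{U}_n)$ to be nonempty, all but finitely many $K_n$ are nonempty and I may fix an enumeration $K_n=\{k_n^m:m\in\omega\}$ (with repetitions allowed when $K_n$ is finite).

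Next I would reduce the problem to a bounding statement in $\omega^\omega$. For every $x\in X$ define $f_x\in\omega^\omega$ by $f_x(n)=\min\{m\in\omega:x\in St(k_n^m,\mathcal{U}_n)\}$ whenever $x\in St(K_n,\mathcal{U}_n)$, and $f_x(n)=0$ otherwise; this is well defined because $x\in St(K_n,\mathcal{U}_n)$ means some member of $\mathcal{U}_n$ contains $x$ together with a point $k_n^m$ of $K_n$, whence $x\in St(k_n^m,\mathcal{U}_n)$ for that $m$ by monotonicity of the star operator. The family $\{f_x:x\in X\}$ has cardinality at most $|X|<\mathfrak{b}$, so it is $\le^*$-bounded: there is $g\in\omega^\omega$ with $f_x\le^* g$ for every $x\in X$. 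Put $F_n=\{k_n^m:m\le g(n)\}$; then each $F_n$ is a finite subset of $D_n$, which is exactly the kind of selection $selSSH$ requires.

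It remains to check that $\{St(F_n,\mathcal{U}_n):n\in\omega\}$ is a $\gamma$-cover. Fix $x\in X$. Since $\{St(K_n,\mathcal{U}_n):n\in\omega\}$ is a $\gamma$-cover, $x\in St(K_n,\mathcal{U}_n)$ for all $n$ past some $N_0$, and since $f_x\le^* g$, we have $f_x(n)\le g(n)$ for all $n$ past some $N_1$; for $n\ge\max\{N_0,N_1\}$ both hold, so $x\in St(k_n^{f_x(n)},\mathcal{U}_n)\subseteq St(F_n,\mathcal{U}_n)$, and thus $x$ lies in all but finitely many members of the family. The only genuinely delicate points are bookkeeping ones: ensuring $f_x$ is total (handled by the convention on the finitely many $n$ with $x\notin St(K_n,\mathcal{U}_n)$, which do not disturb $\le^*$) and confirming that the resulting family is infinite so as to qualify as a $\gamma$-cover in the sense of Section~\ref{classicalSSP}. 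I expect the main conceptual step to be recognizing that countable extent is exactly what converts the closed discrete kernels into $\omega$-indexed families on which the bounding number $\mathfrak{b}$ can act; the remainder is the standard eventual-domination argument.
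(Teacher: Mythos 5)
Your proposal is correct and follows essentially the same route as the paper's proof: extract closed discrete kernels $C_n\subseteq D_n$ witnessing $selSS^*_{cd}(\mathcal{O},\Gamma)$, use countable extent to enumerate each as $\{c^n_i:i\in\omega\}$, define the minimal-index functions $f_x$ (with the same value-$0$ convention off the $\gamma$-cover tail), bound them by some $g$ using $|X|<\mathfrak{b}$, and take the initial segments $F_n=\{c^n_i:i\le g(n)\}$. Your handling of the bookkeeping issues (possibly empty kernels, totality of $f_x$) is if anything slightly more explicit than the paper's.
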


\begin{proof}
Assume $X = \{x_\alpha: \alpha < \kappa\}$ ($\kappa<\mathfrak{b}$) is $selSS^*_{cd}(\mathcal{O},\Gamma)$ with countable extent. Let $(\mathcal{U}_n: n\in \omega)$ be any sequence of open covers of $X$ and let $(D_n:n\in \omega)$ be any sequence of dense subsets of $X$. Since $X$ is $selSS^*_{cd}(\mathcal{O},\Gamma)$, for each $n \in \omega$, we can take $C_n\subseteq D_n$ closed discrete such that $\{St(C_n,\mathcal{U}_n):n\in\omega\}$ is a $\gamma$-cover of $X$. We can assume each $C_n$ to be countable. Thus, for each $n\in\omega$, we enumerate $C_n=\{c_i^n:i\in\omega\}$.\\
For each $\alpha < \kappa$ we define $f_\alpha\in \omega^\omega$ such that for $n\in\omega$, $f_\alpha(n)=min\{m\in\omega:x_\alpha \in St(c^n_m,\mathcal{U}_n)\}$ if $x_\alpha \in St(C_n,\mathcal{U}_n)$ and $f_\alpha(n)=0$ otherwise. Since $|\{f_\alpha:\alpha < \kappa\}|<\mathfrak{b}$, there exists a function $g\in\omega^\omega$ such that for each $\alpha<\kappa$, $f_\alpha\leq^* g$. We define, for each $n\in\omega$, $F_n=\{c_i^n:i\leq g(n)\}$. It follows that the collection $\{St(F_n,\mathcal{U}_n):n\in \omega\}$ is a $\gamma$-cover of $X$. Indeed, let $x_\alpha \in X$. Then, there is $n_0\in\omega$ so that for every $n\geq n_0$, $x_\alpha\in St(C_n,\mathcal{U}_n)$. Since $f_\alpha \leq ^* g$, there is $n_1\in\omega$ such that for every $n\geq n_1$, $f_\alpha(n)\leq g(n)$. Put $m= max\{n_0, n_1\}$. Thus, if $n\geq m$, then $x_\alpha \in St(c^n_{f_\alpha(n)},\mathcal{U}_n)\subseteq St(F_n, \mathcal{U}_n)$. Therefore, the collection $\{St(F_n, \mathcal{U}_n):n\in\omega\}$ is a $\gamma$-cover of $X$. Thus, $X$ is selectively strongly star-Hurewicz.
\end{proof}

\begin{lemma}\label{gdm}
If $X$ is $selSS^*_{cd}(\mathcal{O},\Gamma)$ with countable extent and $|X|<\mathfrak{d}$, then $X$ is $selSSM$.
\end{lemma}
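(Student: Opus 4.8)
The plan is to mimic the argument of Lemma \ref{gbh} verbatim, replacing the bounding-number step by a dominating-number step and weakening the conclusion from a $\gamma$-cover to an ordinary open cover. First I would write $X = \{x_\alpha : \alpha < \kappa\}$ with $\kappa < \mathfrak{d}$, fix an arbitrary sequence $\{\mathcal{U}_n : n \in \omega\}$ of open covers of $X$ and an arbitrary sequence $\{D_n : n \in \omega\}$ of dense subsets of $X$. Using that $X$ is $selSS^*_{cd}(\mathcal{O},\Gamma)$, for each $n$ I obtain a closed discrete set $C_n \subseteq D_n$ such that $\{St(C_n,\mathcal{U}_n) : n \in \omega\}$ is a $\gamma$-cover of $X$. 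As in Lemma \ref{gbh}, countable extent forces each $C_n$ to be countable, so I may enumerate $C_n = \{c^n_i : i \in \omega\}$.

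Next, exactly as in Lemma \ref{gbh}, for each $\alpha < \kappa$ I define $f_\alpha \in \omega^\omega$ by setting $f_\alpha(n) = \min\{m \in \omega : x_\alpha \in St(c^n_m,\mathcal{U}_n)\}$ whenever $x_\alpha \in St(C_n,\mathcal{U}_n)$ and $f_\alpha(n) = 0$ otherwise. The only change from Lemma \ref{gbh} occurs here: since $|\{f_\alpha : \alpha < \kappa\}| < \mathfrak{d}$, this family is not dominating, so I can choose $g \in \omega^\omega$ with $g \not\le^* f_\alpha$ for every $\alpha < \kappa$; equivalently, the set $\{n \in \omega : g(n) \ge f_\alpha(n)\}$ is infinite for each $\alpha$. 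I then put $F_n = \{c^n_i : i \le g(n)\}$, which is a finite subset of $D_n$.

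Finally I would verify that $\{St(F_n,\mathcal{U}_n) : n \in \omega\}$ is an open cover of $X$ (now merely an open cover, not a $\gamma$-cover). Fix $x_\alpha \in X$. Because $\{St(C_n,\mathcal{U}_n) : n \in \omega\}$ is a $\gamma$-cover, $x_\alpha \in St(C_n,\mathcal{U}_n)$ for all but finitely many $n$; because $g \not\le^* f_\alpha$, the set of $n$ with $g(n) \ge f_\alpha(n)$ is infinite. Intersecting these two sets of indices yields at least one $n$ for which simultaneously $x_\alpha \in St(C_n,\mathcal{U}_n)$ and $f_\alpha(n) \le g(n)$. For such an $n$ the definition of $f_\alpha$ gives $x_\alpha \in St(c^n_{f_\alpha(n)},\mathcal{U}_n)$, and since $f_\alpha(n) \le g(n)$ we have $c^n_{f_\alpha(n)} \in F_n$, whence $x_\alpha \in St(F_n,\mathcal{U}_n)$. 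Thus every point is covered and $X$ is $selSSM$.

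The argument is essentially routine once Lemma \ref{gbh} is available; the one genuine difference, and the step to be careful about, is the passage from the \emph{eventual} domination $f_\alpha \le^* g$ (which produced a $\gamma$-cover there) to the \emph{infinitely-often} domination $g(n) \ge f_\alpha(n)$ that $|X| < \mathfrak{d}$ provides. Infinitely-often domination by itself need not place the witnessing index $n$ among those where $x_\alpha \in St(C_n,\mathcal{U}_n)$, so the $\gamma$-cover property of the $C_n$'s must be invoked precisely to guarantee that the two infinite/cofinite index sets meet; this is what secures a single $n$ witnessing coverage, and it is the only point where the proof genuinely diverges from Lemma \ref{gbh}.
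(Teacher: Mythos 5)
Your proposal is correct and follows essentially the same route as the paper: the paper's proof also reuses the setup of Lemma \ref{gbh} (closed discrete $C_n\subseteq D_n$ with $\{St(C_n,\mathcal{U}_n):n\in\omega\}$ a $\gamma$-cover, the same functions $f_\alpha$), picks $g$ with $g\nleq^* f_\alpha$ for all $\alpha$ since $\kappa<\mathfrak{d}$, sets $F_n=\{c^n_i:i\leq g(n)\}$, and finds for each $x_\alpha$ an index $n_1\geq n_0$ where $g(n_1)>f_\alpha(n_1)$ and $x_\alpha\in St(C_{n_1},\mathcal{U}_{n_1})$. Your closing remark correctly identifies the one place the argument diverges from Lemma \ref{gbh}, namely that the cofinite set of indices given by the $\gamma$-cover must meet the infinite set of indices where $g$ exceeds $f_\alpha$, which is exactly how the paper argues.
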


\begin{proof}
By mimicking the first part of the proof of Lemma \ref{gbh}, we get a family of functions $\{f_\alpha:\alpha < \kappa\}$ with $\kappa < \mathfrak{d}$.
Then, there exists a function $g\in\omega^\omega$ such that for each $\alpha<\kappa$, $g \nleq^* f_\alpha$. We define, for each $n\in\omega$, $F_n=\{c_i^n:i\leq g(n)\}$. Let us show that the collection $\{St(F_n,\mathcal{U}_n):n\in \omega\}$ is an open cover of $X$. Let $x_\alpha \in X$. Then, there is $n_0\in\omega$ so that for every $n\geq n_0$, $x_\alpha\in St(C_n,\mathcal{U}_n)$. Since $g \nleq ^* f_\alpha$, there is $n_1\geq n_0$ such that $g(n_1) > f_\alpha(n_1)$. Thus, we obtain that $x_\alpha \in St(c^{n_1}_{f_\alpha(n_1)},\mathcal{U}_{n_1})\subseteq St(F_{n_1}, \mathcal{U}_{n_1})$. Hence, the collection $\{St(F_n, \mathcal{U}_n):n\in\omega\}$ is an open cover of $X$. Thus, $X$ is selectively strongly star-Menger.
\end{proof}

\begin{lemma}\label{obm}
If $X$ is $selSS^*_{cd}(\mathcal{O},\mathcal{O})$ with countable extent and $|X|<\mathfrak{b}$, then $X$ is $selSSM$.
\end{lemma}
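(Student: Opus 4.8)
The plan is to reduce Lemma \ref{obm} to the bounded-family argument already carried out in Lemma \ref{gbh}, the only genuine difficulty being that $selSS^*_{cd}(\mathcal{O},\mathcal{O})$ yields a single collective open cover rather than a $\gamma$-cover. In the proofs of Lemmas \ref{gbh} and \ref{gdm} the function $f_\alpha$ recording the least index of $C_n$ whose star captures $x_\alpha$ is (eventually) genuinely defined, because the selected stars there form a $\gamma$-cover and hence each point lies in all but finitely many of them. Starting instead from $selSS^*_{cd}(\mathcal{O},\mathcal{O})$ one only knows that each $x_\alpha$ lies in at least one selected star, so the analogue of $f_\alpha$ may be defined on only finitely many coordinates; a straight $\mathfrak{b}$-bounding argument then breaks down exactly at the points covered by only finitely many selected stars. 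I regard this as the main obstacle, and I would remove it by first upgrading the hypothesis to per-coordinate full coverage.

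For that upgrade, observe that by Observation \ref{relationship} the property $selSS^*_{cd}(\mathcal{O},\mathcal{O})$ implies aStar-$\sigma$-$cd$: for every open cover $\mathcal{U}$ and every dense $D\subseteq X$ there is $K\subseteq D$ which is a countable union of closed discrete subsets of $X$ with $St(K,\mathcal{U})=X$. Since $X$ has countable extent, each closed discrete summand is countable, so $K$ itself is countable. Hence for every open cover $\mathcal{U}$ and every dense set $D$ there is a countable $C\subseteq D$ with $St(C,\mathcal{U})=X$; that is, $X$ is absolutely strongly star-Lindel\"{o}f ($aSSL$). This is the step I expect to carry the real content, as it is what restores the total/cofinal behaviour of $f_\alpha$ that the $(\mathcal{O},\Gamma)$ hypothesis supplies for free in Lemma \ref{gbh}.

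With $aSSL$ established I would run the argument of Lemma \ref{gbh} in the same style. Enumerate $X=\{x_\alpha:\alpha<\kappa\}$ with $\kappa<\mathfrak{b}$, and let $\{\mathcal{U}_n:n\in\omega\}$ and $\{D_n:n\in\omega\}$ be given sequences of open covers and of dense sets. For each $n$, using $aSSL$ choose a countable set $B_n=\{b_i^n:i\in\omega\}\subseteq D_n$ with $St(B_n,\mathcal{U}_n)=X$; then $f_\alpha(n)=\min\{i\in\omega:x_\alpha\in St(b_i^n,\mathcal{U}_n)\}$ is defined for every $n$, since $St(B_n,\mathcal{U}_n)=\bigcup_i St(b_i^n,\mathcal{U}_n)=X$. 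As $|\{f_\alpha:\alpha<\kappa\}|<\mathfrak{b}$, fix $g\in\omega^\omega$ with $f_\alpha\le^* g$ for all $\alpha$, and set $F_n=\{b_i^n:i\le g(n)\}\in[D_n]^{<\omega}$. For each $\alpha$ and all sufficiently large $n$ one has $x_\alpha\in St(b^n_{f_\alpha(n)},\mathcal{U}_n)\subseteq St(F_n,\mathcal{U}_n)$, so $\{St(F_n,\mathcal{U}_n):n\in\omega\}$ is a $\gamma$-cover, in particular an open cover, of $X$. Thus $X$ is $selSSM$.

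Finally, I would note two things. The argument in fact produces a $\gamma$-cover, so under these hypotheses one even obtains $selSSH$; and since $\mathfrak{b}\le\mathfrak{d}$, the proof could be shortened by appealing directly to Corollary \ref{trivilind}(2): from $aSSL$ and $|X|<\mathfrak{b}\le\mathfrak{d}$ it is immediate that $X$ is $selSSM$. I would nevertheless keep the explicit bounding argument, both to make the use of $\mathfrak{b}$ transparent and to keep the exposition parallel to the other dashed arrows of Theorem \ref{diagramextent}.
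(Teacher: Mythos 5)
Your proof is correct, but it takes a genuinely different route from the paper's. The paper attacks the lemma directly: it applies $selSS^*_{cd}(\mathcal{O},\mathcal{O})$ to the given sequences $(\mathcal{U}_n)$ and $(D_n)$, but with the extra (standard, though not justified there) refinement that the closed discrete sets $C_n\subseteq D_n$ can be chosen so that \emph{every tail} $\{St(C_n,\mathcal{U}_n):n\ge k\}$ is an open cover of $X$; countable extent is used only to enumerate each $C_n$. It then defines $f_\alpha$ exactly as in Lemma \ref{gbh} (partial, with default value $0$ when $x_\alpha\notin St(C_n,\mathcal{U}_n)$), takes $g$ with $f_\alpha\le^* g$ using $\kappa<\mathfrak{b}$, and for each point combines eventual domination with the tail-cover property to produce a single coordinate $n_1$ with $x_\alpha\in St(F_{n_1},\mathcal{U}_{n_1})$; accordingly it concludes only that the selected stars form an open cover. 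You identified precisely the obstacle (the $f_\alpha$ need not be meaningfully total when the hypothesis is $(\mathcal{O},\mathcal{O})$ rather than $(\mathcal{O},\Gamma)$), but you resolve it differently: you use countable extent at the outset to collapse $selSS^*_{cd}(\mathcal{O},\mathcal{O})$ to $aSSL$ (via Observation \ref{relationship} and the countability of closed discrete sets), after which every $f_\alpha$ is total and the clean $\le^*$-domination argument of Theorem \ref{generalHurewicz} applies. Your route buys two things: the lemma becomes essentially a corollary of results already in the paper (Corollary \ref{trivilinb}(2), or Corollary \ref{trivilind}(2) together with $\mathfrak{b}\le\mathfrak{d}$), and it yields the strictly stronger conclusion that $X$ is $selSSH$, i.e.\ a dashed arrow from $selSS^*_{cd}(\mathcal{O},\mathcal{O})$ to $selSSH$ that the diagram of Theorem \ref{diagramextent} does not record. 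What the paper's direct argument buys in exchange is uniformity: it keeps the proof parallel to Lemmas \ref{gbh}, \ref{gdm} and \ref{gcovr}, and it isolates what $\mathfrak{b}$-bounding gives on its own, at the cost of invoking the unproved (though standard, by splitting $\omega$ into infinitely many infinite sets) tail-cover strengthening of the selection hypothesis.
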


\begin{proof}
List $X = \{x_\alpha: \alpha < \kappa\}$ ($\kappa<\mathfrak{b}$). Let $(\mathcal{U}_n: n\in \omega)$ be any sequence of open covers of $X$ and let $(D_n:n\in \omega)$ be any sequence of dense subsets of $X$. For each $n \in \omega$, take $C_n\subseteq D_n$ closed discrete such that for any $k\in\omega$, $\{St(C_n,\mathcal{U}_n):n\geq k\}$ is an open cover of $X$. 
For each $\alpha < \kappa$ we define $f_\alpha$ as in the proof of Lemma \ref{gbh}. Since $|\{f_\alpha:\alpha < \kappa\}|<\mathfrak{b}$, there exists a function $g\in\omega^\omega$ such that for each $\alpha<\kappa$, $f_\alpha\leq^* g$. We define, for each $n\in\omega$, $F_n=\{c_i^n:i\leq g(n)\}$. It follows that the collection $\{St(F_n,\mathcal{U}_n):n\in \omega\}$ is an open cover of $X$. Indeed, let $x_\alpha \in X$. Then, there is $n_0\in\omega$ so that for every $n\geq n_0$, $f_\alpha(n)\leq g(n)$. In addition, for such $n_0$, there exists $n_1\geq n_0$ such that $x_\alpha\in St(C_{n_1}, \mathcal{U}_{n_1})$. In particular, we get that $x_\alpha \in St(c^{n_1}_{f_\alpha(n_1)},\mathcal{U}_{n_1})\subseteq St(F_{n_1}, \mathcal{U}_{n_1})$. Therefore, the collection $\{St(F_n, \mathcal{U}_n):n\in\omega\}$ is an open cover of $X$. Hence, $X$ is selectively strongly star-Menger.
\end{proof}

\begin{lemma}\label{gcovr}
If $X$ is $selSS^*_{cd}(\mathcal{O},\Gamma)$ with countable extent and $|X|<cov(\mathcal{M})$, then $X$ is $selSSR$.
\end{lemma}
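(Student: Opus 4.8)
The plan is to follow the template of Lemma \ref{gbh} and Lemma \ref{gdm}, replacing the (eventual) domination argument by the infinitely-equal ``guessing'' argument already used in the proof of Proposition \ref{aSSLSMALLcovMimpliesSelSSM}. Recall that $selSSR$ is $selSS^*_1(\mathcal{O},\mathcal{O})$, so given a sequence $(\mathcal{U}_n:n\in\omega)$ of open covers and a sequence $(D_n:n\in\omega)$ of dense subsets of $X$ I must produce single points $x_n\in D_n$ such that $\{St(x_n,\mathcal{U}_n):n\in\omega\}$ is an open cover of $X$.

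First I would list $X=\{x_\alpha:\alpha<\kappa\}$ with $\kappa<cov(\mathcal{M})$ and apply the hypothesis $selSS^*_{cd}(\mathcal{O},\Gamma)$ to the given sequences to obtain closed discrete $C_n\subseteq D_n$ such that $\{St(C_n,\mathcal{U}_n):n\in\omega\}$ is a $\gamma$-cover of $X$. Using countable extent each $C_n$ is countable, so I enumerate $C_n=\{c_i^n:i\in\omega\}$ (allowing repetitions when $C_n$ is finite, so that $c_i^n$ is defined for every index $i$). Exactly as in Lemma \ref{gbh}, for each $\alpha<\kappa$ I define $f_\alpha\in\omega^\omega$ by $f_\alpha(n)=\min\{m:x_\alpha\in St(c_m^n,\mathcal{U}_n)\}$ whenever $x_\alpha\in St(C_n,\mathcal{U}_n)$, and $f_\alpha(n)=0$ otherwise; this is well defined precisely because the $\gamma$-cover property guarantees $x_\alpha\in St(C_n,\mathcal{U}_n)$ for all but finitely many $n$.

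The crucial step is to invoke the combinatorial characterization of $cov(\mathcal{M})$ already used in Proposition \ref{aSSLSMALLcovMimpliesSelSSM}: since $|\{f_\alpha:\alpha<\kappa\}|<cov(\mathcal{M})$, there is $g\in\omega^\omega$ guessing each $f_\alpha$, i.e. $\{n\in\omega:g(n)=f_\alpha(n)\}$ is infinite for every $\alpha<\kappa$. I then set $x_n=c_{g(n)}^n\in C_n\subseteq D_n$. To verify that $\{St(x_n,\mathcal{U}_n):n\in\omega\}$ covers $X$, fix $x_\alpha$; choose $n_0$ with $x_\alpha\in St(C_n,\mathcal{U}_n)$ for all $n\ge n_0$, and using that $g$ guesses $f_\alpha$ infinitely often pick $n_1\ge n_0$ with $g(n_1)=f_\alpha(n_1)$. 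Then $x_\alpha\in St(c_{f_\alpha(n_1)}^{n_1},\mathcal{U}_{n_1})=St(c_{g(n_1)}^{n_1},\mathcal{U}_{n_1})=St(x_{n_1},\mathcal{U}_{n_1})$, so $x_\alpha$ is covered.

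I expect no serious obstacle; the only conceptual point — and the reason the Rothberger case needs $cov(\mathcal{M})$ rather than $\mathfrak{d}$ or $\mathfrak{b}$ — is that here I may select only one point per index $n$, so a single coincidence $g(n_1)=f_\alpha(n_1)$ must suffice to cover $x_\alpha$. Consequently I can only guarantee an open cover (target $\mathcal{O}$) and not a $\gamma$-cover, which is exactly what $selSSR$ requires; the infinitely-equal guessing property of $cov(\mathcal{M})$ is precisely calibrated to deliver this.
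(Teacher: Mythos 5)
Your proposal is correct and follows essentially the same argument as the paper's proof: extract countable closed discrete $C_n\subseteq D_n$ witnessing $selSS^*_{cd}(\mathcal{O},\Gamma)$, define the functions $f_\alpha$ exactly as in Lemma \ref{gbh}, invoke the infinitely-equal guessing characterization of $cov(\mathcal{M})$ to obtain $g$, and select the points $c_{g(n)}^n$, verifying the cover via the $\gamma$-cover tail property together with one coincidence $g(n_1)=f_\alpha(n_1)$. The only additions (handling repetitions when $C_n$ is finite, and the closing remark on why only an $\mathcal{O}$-type conclusion is possible) are harmless clarifications, not a different route.
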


\begin{proof}
In the same way as in Lemma \ref{gbh}, we can define a family of functions $\{f_\alpha:\alpha < \kappa\}$ with $\kappa < cov(\mathcal{M})$. Then,
there exists a function $g\in\omega^\omega$ such that for each $\alpha<\kappa$, $|\{n\in\omega: f_\alpha(n)=g(n)\}|=\omega$. Let us show that the collection $\{St(c_{g(n)}^n,\mathcal{U}_n):n\in \omega\}$ is an open cover of $X$. Let $x_\alpha \in X$. Then, there is $n_0\in\omega$ so that for every $n\geq n_0$, $x_\alpha\in St(C_n,\mathcal{U}_n)$. Since $g(n)=f_\alpha(n)$ for infinitely many $n$, we can take $n_1\geq n_0$ such that $g(n_1)=f_\alpha(n_1)$. Then, $x_\alpha \in St(c^{n_1}_{f_\alpha(n_1)},\mathcal{U}_{n_1}) = St(c^{n_1}_{g(n_1)},\mathcal{U}_{n_1})$. Therefore, the collection $\{St(c_{g(n)}^n,\mathcal{U}_n):n\in \omega\}$ is an open cover of $X$. Hence, $X$ is selectively strongly star-Rothberger.
\end{proof}

We mention a couple of immediate consequences of Theorem \ref{diagramextent}:

\begin{corollary}
In spaces of size less than $\mathfrak{b}$ with countable extent, the following properties are equivalent:
\begin{enumerate}
    \item selectively strongly star-Menger;
    \item strongly selectively (a)
\end{enumerate}
\end{corollary}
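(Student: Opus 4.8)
The plan is to establish the two implications separately, using the diagram of Theorem \ref{diagramextent} together with the observations that bridge selectively strongly star-Menger and the closed-discrete selection principles. The downward direction, that selectively strongly star-Menger implies strongly selectively $(a)$, should be essentially free: in a $T_1$ space finite sets are closed and discrete, so by Observation \ref{T1SelSSMimpliesStronglysel(a)} every $T_1$ selectively strongly star-Menger space is strongly selectively $(a)$, i.e. satisfies $selSS^*_{cd}(\mathcal{O},\mathcal{O})$. Since all spaces here are assumed regular (hence $T_1$), this direction holds with no cardinality hypothesis at all.

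For the reverse direction I would invoke Lemma \ref{obm} directly. By definition, strongly selectively $(a)$ is exactly $selSS^*_{cd}(\mathcal{O},\mathcal{O})$. Thus if $X$ has size less than $\mathfrak{b}$ and countable extent and satisfies $selSS^*_{cd}(\mathcal{O},\mathcal{O})$, then Lemma \ref{obm} yields that $X$ is selectively strongly star-Menger. This closes the loop, giving the equivalence under the stated hypotheses ($|X|<\mathfrak{b}$ and countable extent).

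First I would state that the implication $selSSM \Rightarrow selSS^*_{cd}(\mathcal{O},\mathcal{O})$ is the content of Observation \ref{T1SelSSMimpliesStronglysel(a)} and requires only the $T_1$ axiom; then I would note that the converse is precisely the dashed arrow from $selSS^*_{cd}(\mathcal{O},\mathcal{O})$ to $selSSM$ labelled ``$<\mathfrak{b}$'' in the diagram of Theorem \ref{diagramextent}, whose proof is Lemma \ref{obm} and which uses both the countable-extent hypothesis (to enumerate each closed discrete selector $C_n$ as a countable set $\{c_i^n : i \in \omega\}$) and the size bound $|X|<\mathfrak{b}$ (to dominate the family of guessing functions $\{f_\alpha : \alpha < \kappa\}$ by a single $g \in \omega^\omega$). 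I do not anticipate a genuine obstacle, since both directions are already available in the excerpt; the only care needed is to record that the cardinality and extent hypotheses are used solely in the nontrivial direction, while the easy direction is unconditional among $T_1$ spaces.

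\begin{proof}
Recall that every space under consideration is regular, hence $T_1$. If $X$ is selectively strongly star-Menger, then by Observation \ref{T1SelSSMimpliesStronglysel(a)}, $X$ is strongly selectively $(a)$, that is, $X$ satisfies $selSS^*_{cd}(\mathcal{O},\mathcal{O})$; this implication uses only the $T_1$ property and holds without any restriction on the size or extent of $X$. Conversely, suppose $X$ is strongly selectively $(a)$, i.e. $X$ satisfies $selSS^*_{cd}(\mathcal{O},\mathcal{O})$, and assume in addition that $X$ has countable extent and $|X|<\mathfrak{b}$. Then Lemma \ref{obm} applies directly and yields that $X$ is selectively strongly star-Menger. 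Combining the two implications, the properties are equivalent for spaces of size less than $\mathfrak{b}$ with countable extent.
\end{proof}
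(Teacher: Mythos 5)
Your proof is correct and follows the paper's own route exactly: the paper derives this corollary immediately from the diagram of Theorem \ref{diagramextent}, where the direction $selSSM \Rightarrow selSS^*_{cd}(\mathcal{O},\mathcal{O})$ is the solid arrow coming from Observation \ref{T1SelSSMimpliesStronglysel(a)} (finite sets are closed discrete in $T_1$ spaces), and the converse under $|X|<\mathfrak{b}$ and countable extent is the dashed arrow proved in Lemma \ref{obm}. You also correctly identify where each hypothesis is used, so nothing is missing.
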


\begin{corollary}
In spaces of size less than $\mathfrak{b}$ with countable extent, the following properties are equivalent:
\begin{enumerate}
    \item absolutely strongly star-Menger;
    \item selectively (a)
\end{enumerate}
\end{corollary}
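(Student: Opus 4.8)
The plan is to read the corollary off the two arrows of the diagram in Theorem \ref{diagramextent} that join absolutely strongly star-Menger to selectively $(a)$. Recall that absolutely strongly star-Menger is $aSS^*_{fin}(\mathcal{O},\mathcal{O})$ and selectively $(a)$ is $aSS^*_{cd}(\mathcal{O},\mathcal{O})$, so the content is the equivalence of these two selection principles under the standing hypotheses that $X$ has countable extent and $|X| < \mathfrak{b}$. Concretely, I must prove $aSS^*_{fin}(\mathcal{O},\mathcal{O}) \Rightarrow aSS^*_{cd}(\mathcal{O},\mathcal{O})$ and, conversely, $aSS^*_{cd}(\mathcal{O},\mathcal{O}) \Rightarrow aSS^*_{fin}(\mathcal{O},\mathcal{O})$ when $X$ is small.

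The forward implication needs no cardinal hypothesis and is the solid arrow of the diagram. Since every space here is regular, hence $T_1$, each finite subset of $X$ is closed and discrete, so $fin \subseteq cd$; therefore, for any sequence $\{\mathcal{U}_n : n \in \omega\}$ of open covers and any dense $D \subseteq X$, a sequence of finite sets $F_n \subseteq D$ witnessing $aSS^*_{fin}(\mathcal{O},\mathcal{O})$ also witnesses $aSS^*_{cd}(\mathcal{O},\mathcal{O})$. This gives absolutely strongly star-Menger $\Rightarrow$ selectively $(a)$ in full generality.

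The converse is the substantive half and is the absolute analogue of Lemma \ref{obm}; this is where $|X| < \mathfrak{b}$ and countable extent are used. I would fix a sequence $\{\mathcal{U}_n : n \in \omega\}$ of open covers and a single dense set $D$, list $X = \{x_\alpha : \alpha < \kappa\}$ with $\kappa < \mathfrak{b}$, and first produce closed discrete sets $C_n \subseteq D$ whose stars cover in every tail, i.e. $\{St(C_n,\mathcal{U}_n) : n \geq k\}$ covers $X$ for each $k$. Countable extent makes each $C_n$ countable, so I may enumerate $C_n = \{c_i^n : i \in \omega\}$ and transcribe the bounding argument of Lemma \ref{obm}: put $f_\alpha(n) = \min\{m : x_\alpha \in St(c_m^n,\mathcal{U}_n)\}$ (and $f_\alpha(n) = 0$ when $x_\alpha \notin St(C_n,\mathcal{U}_n)$), use $\kappa < \mathfrak{b}$ to choose $g \in \omega^\omega$ with $f_\alpha \leq^* g$ for all $\alpha$, and set $F_n = \{c_i^n : i \leq g(n)\}$. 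For a given $x_\alpha$, pick $n_0$ past which $f_\alpha \leq g$ and then, by the tail-cover property, $n_1 \geq n_0$ with $x_\alpha \in St(C_{n_1},\mathcal{U}_{n_1})$; then $x_\alpha \in St(c^{n_1}_{f_\alpha(n_1)},\mathcal{U}_{n_1}) \subseteq St(F_{n_1},\mathcal{U}_{n_1})$, so $\{St(F_n,\mathcal{U}_n) : n \in \omega\}$ covers $X$ and witnesses absolutely strongly star-Menger.

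The step I expect to be the main obstacle is securing the tail-cover property, since $aSS^*_{cd}(\mathcal{O},\mathcal{O})$ a priori yields only one covering sequence, not one whose every tail covers. I would resolve this with the standard partition trick: fix a partition $\omega = \bigcup_{j \in \omega} I_j$ into infinitely many infinite pieces with $\min I_j \to \infty$ (for instance $I_j = \{2^j(2k+1) - 1 : k \in \omega\}$), apply $aSS^*_{cd}(\mathcal{O},\mathcal{O})$ to each subsequence $(\mathcal{U}_n)_{n \in I_j}$ with the same dense set $D$ to obtain $(C_n)_{n \in I_j}$ with $\{St(C_n,\mathcal{U}_n) : n \in I_j\}$ a cover, and note that for each $k$ some $I_j \subseteq [k,\infty)$, whence the $k$-tail covers. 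Everything else is a copy of Lemma \ref{obm} with the sequence $(D_n)$ of dense sets replaced by the single dense set $D$.
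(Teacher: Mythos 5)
Your proposal is correct and follows essentially the paper's own route: the implication from absolutely strongly star-Menger to selectively $(a)$ is the diagram's solid arrow (finite sets are closed and discrete in $T_1$ spaces), and the converse is precisely the absolute analogue of Lemma \ref{obm}, which is how the paper derives this corollary from Theorem \ref{diagramextent}. Your explicit partition trick for securing the tail-cover property $\{St(C_n,\mathcal{U}_n):n\geq k\}$ is a welcome detail, since the paper's Lemma \ref{obm} asserts that property without justification.
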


\begin{remark}
The hypothesis of the countable extent in Theorem \ref{diagramextent} is necessary. Assuming $\omega_1<\mathfrak{b}$ we have that the discrete space of size $\omega_1$ is $selSS^*_{cd}(\mathcal{O},\Gamma)$ but is not selSSM. 
\end{remark}

\section{Absolute and selective versions of the neighbourhood star selection principles}

The selective and absolute versions of the properties $SSM$, $SSR$ and $SSH$ were studied in this paper. A different sort of these star selection principles which is closely related to the properties studied here is the neighbourhood version of the star selection principles. The definitions of these neighbourhood star selection principles were given in \cite{K2} (with different name) and studied in \cite{BCKM}. In this final section, using similar ideas as before, we introduce the absolute and selective versions of the neighbourhood star selection principles. Some further investigations on these kind of versions (which we have just begun to study) may be interesting.

We start by mentioning the definition of the absolute and selective versions of the neighbourhood star-Lindel\"of property (the neighbourhood star-Lindel\"of property was introduced in \cite{BCKM} and later studied by Song in \cite{YKS05} and \cite{YKS06}). 

\begin{definition}
A space $X$ is absolutely neighbourhood star-Lindel\"{o}f ($aNSL$) if for any open cover $\mathcal{U}$ of $X$ and any dense subset $D$ of $X$, there is a countable set $C\subseteq D$ such that for any open set $O$ with $C\subseteq O$,  $St(O,\mathcal{U})=X$.
\end{definition}

\begin{definition}
A space $X$ is selectively neighbourhood star-Lindel\"of ($selNSL$) if for any open cover $\mathcal{U}$ of $X$ and any sequence $\{D_n:n\in\omega\}$ of dense sets of $X$, there are finite sets $F_n\subseteq D_n$, $n\in\omega$, such that for any open sets $O_n$ in $X$ with $F_n\subseteq O_n$, $n\in\omega$, $\{St(O_n,\mathcal{U}):n\in\omega\}$ is an open cover of $X$.
\end{definition}

Following the same notation and terminology of this article, we introduce general forms of two selection hypothesis which allows us to define the absolute and selective versions of the neighbourhood star selection properties.

\begin{definition}
Given a space $X$, the following selection hypothesis are defined:\\
\newline
$\mathbf{absolutely \; NSS^*_{\mathcal{K}}(\mathcal{A},\mathcal{B})}$: For each sequence $\{\mathcal{U}_n:n\in\omega\}\subseteq\mathcal{A}$ and each dense subset $D$ of $X$, there exists a sequence $\{K_n:n\in\omega\}\subseteq\mathcal{K}$ with $K_n\subseteq D$, $n\in\omega$, such that for any open sets $O_n$ with $K_n\subseteq O_n$, $n\in\omega$, $\{St(O_n,\mathcal{U}_n):n\in\omega\}\in\mathcal{B}$.\\
\newline
$\mathbf{selectively \; NSS^*_{\mathcal{K}}(\mathcal{A},\mathcal{B})}$: For each sequence $\{\mathcal{U}_n:n\in\omega\}\subseteq\mathcal{A}$ and each sequence $\{D_n:n\in\omega\}$ of dense sets of $X$, there exists a sequence $\{K_n:n\in\omega\}\subseteq\mathcal{K}$ with $K_n\subseteq D_n$, $n\in\omega$, such that for any open sets $O_n$ with $K_n\subseteq O_n$, $n\in\omega$, $\{St(O_n,\mathcal{U}_n):n\in\omega\}\in\mathcal{B}$. 
\end{definition}

For shortness, we write $\mathbf{aNSS^*_{\mathcal{K}}(\mathcal{A},\mathcal{B})}$ instead of $\mathbf{absolutely \; NSS^*_{\mathcal{K}}(\mathcal{A},\mathcal{B})}$ and, $\mathbf{selNSS^*_{\mathcal{K}}(\mathcal{A},\mathcal{B})}$ instead of $\mathbf{selectively \; NSS^*_{\mathcal{K}}(\mathcal{A},\mathcal{B})}$. Therefore, we introduce the following new properties:

\begin{description}
\item[aNSM]: $aNSS^*_{fin}(\mathcal{O},\mathcal{O})$ defines the absolutely neighbourhood star-Menger property;
\item[aNSR]: $aNSS^*_1(\mathcal{O},\mathcal{O})$ defines the absolutely neighbourhood star-Rothberger property;
\item[aNSH]: $aNSS^*_{fin}(\mathcal{O},\Gamma)$ defines the absolutely neighbourhood star-Hurewicz property;
\item[selNSM]: $selNSS^*_{fin}(\mathcal{O},\mathcal{O})$ defines the selectively neighbourhood star-Menger property;
\item[selNSR]: $selNSS^*_1(\mathcal{O},\mathcal{O})$ defines the selectively neighbourhood star-Rothberger property;
\item[selNSH]: $selNSS^*_{fin}(\mathcal{O},\Gamma)$ defines the selectively neighbourhood star-Hurewicz property.
\end{description}

\newpage 

Obvious implications among these absolute and selective versions provide the following diagram:\\

{\centering
\begin{tikzcd}[row sep=1em, column sep = 2em]
\scalebox{0.8}{selNSH} \arrow[rr] \arrow[dd] && \scalebox{0.8}{selNSM}  \arrow[rr,<-]  \arrow[dr] \arrow[dd] &&
  \scalebox{0.8}{selNSR} \arrow[dd ] \\
& & & \scalebox{0.8}{selNSL} & \\
\scalebox{0.8}{aNSH} \arrow[rr] \arrow[dd] && \scalebox{0.8}{aNSM} \arrow[rr,<-] \arrow[dr]  \arrow[dd]  && \scalebox{0.8}{aNSR} \arrow[dd]  \\
& & & \scalebox{0.8}{aNSL}  \arrow[uu,<-,crossing over] & \\
\scalebox{0.8}{NSH} \arrow[rr] && \scalebox{0.8}{NSM} \arrow[rr,<-] \arrow[dr]  && \scalebox{0.8}{NSR}  \\
& & & \scalebox{0.8}{NSL}  \arrow[uu,<-,crossing over] &
\end{tikzcd}\par
}

\vspace{1cm}
By including the selective and absolute versions of the strongly star principles, we get the following general diagram that involves all selective and absolute versions considered so far:\\

{\centering
\begin{tikzcd}[row sep=.5em, column sep=.8em]
 & & \scalebox{0.8}{H} \arrow[rrrr] \arrow[dd] & & & & \scalebox{0.8}{M} \arrow[rrrr,<-] \arrow[dd] \arrow[dl] & & & & \scalebox{0.8}{R} \arrow[dd] \\ 
 & & & & & \scalebox{0.8}{L} & & & & & \\ 
 & & \scalebox{0.8}{selSSH} \arrow[rrrr] \arrow[dddd] \arrow[dll] & & & & \scalebox{0.8}{selSSM} \arrow[rrrr,<-] \arrow[dddd] \arrow[dll] & & & & \scalebox{0.8}{selSSR} \arrow[dddd] \arrow[dll] \\ 
 \scalebox{0.8}{selNSH} \arrow[rrrr, crossing over] \arrow[dddd] & & & & \scalebox{0.8}{selNSM} \arrow[rrrr,<-,crossing over] \arrow[dddd,crossing over] \arrow[ddl] & & & & \scalebox{0.8}{selNSR} & & \\ 
 & & & & & \scalebox{0.8}{selSSL} \arrow[uuu,<-,crossing over] \arrow[uur,<-,crossing over] \arrow[dll,crossing over] & & & & & \\ 
 & & & \scalebox{0.8}{selNSL} & & & & & & & \\ 
 & & \scalebox{0.8}{aSSH} \arrow[rrrr] \arrow[dddd] \arrow[dll] & & & & \scalebox{0.8}{aSSM} \arrow[rrrr,<-] \arrow[dddd] \arrow[dll] & & & & \scalebox{0.8}{aSSR} \arrow[dddd] \arrow[dll] \\ 
 \scalebox{0.8}{aNSH} \arrow[rrrr,crossing over] \arrow[dddd] & & & & \scalebox{0.8}{aNSM} \arrow[rrrr,<-,crossing over] \arrow[dddd,crossing over] \arrow[ddl] & & & & \scalebox{0.8}{aNSR} \arrow[uuuu,<-,crossing over] & & \\ 
 & & & & & \scalebox{0.8}{aSSL} \arrow[uuuu,<-,crossing over] \arrow[uur,<-,crossing over] & & & & & \\ 
 & & & \scalebox{0.8}{aNSL} \arrow[uuuu,<-,crossing over] \arrow[urr,<-,crossing over] & & & & & & & \\ 
 & & \scalebox{0.8}{SSH} \arrow[rrrr] \arrow[dll] & & & & \scalebox{0.8}{SSM} \arrow[rrrr,<-] \arrow[dll] & & & & \scalebox{0.8}{SSR} \arrow[dll] \\ 
 \scalebox{0.8}{NSH} \arrow[rrrr] & & & & \scalebox{0.8}{NSM} \arrow[rrrr,<-] & & & & \scalebox{0.8}{NSR} \arrow[uuuu,<-,crossing over] & & \\ 
 & & & & & \scalebox{0.8}{SSL} \arrow[uuuu,<-,crossing over] \arrow[uur,<-,crossing over] & & & & & \\ 
 & & & \scalebox{0.8}{NSL} \arrow[uuuu,<-,crossing over] \arrow[uur,<-] \arrow[urr,<-] & & & & & & & \\ 
\end{tikzcd}\par
}

\section*{Acknowledgements}

The second author was partly supported for this research by the Consejo Nacional de Ciencia y Tecnolog\'ia CONACYT, M\'exico, Scholarship 411689.\\

\small
\baselineskip=5pt


\textsc{Department of Mathematics and Statistics, York University, 4700 Keele St. Toronto, ON M3J 1P3 Canada}\par\nopagebreak
\textit{Email address}: J. Casas-de la Rosa: \texttt{olimpico.25@hotmail.com}\\ S. A. Garcia-Balan: \texttt{sgarciab@yorku.ca}

\end{document}